\documentclass[preprint,3p,10pt,a4paper]{elsarticle}
\usepackage[dvipsnames]{xcolor}
\usepackage[ruled, linesnumbered]{algorithm2e}

\usepackage[utf8]{inputenc}
\usepackage[english]{babel}
\usepackage{bm}
\usepackage{placeins}
\usepackage{xfrac}
\usepackage{epstopdf,cmap,amsmath,amssymb,amsfonts,mathtext,enumerate,float,natbib,indentfirst,graphicx,multirow,color,setspace,subfigure,amsthm,chngcntr,url}
\usepackage{lmodern}
\usepackage{mathrsfs}
\usepackage{hyperref}
\usepackage{cleveref}
\usepackage{booktabs}
\usepackage{array}
\usepackage{bm}
\usepackage{float}
\usepackage{siunitx}

\newtheorem{proposition}{Proposition}
\newtheorem{remark}{Remark}

\usepackage{tikz}

\usetikzlibrary{shadows}
\usetikzlibrary{calc,spy}
\usepackage{pgfplots}
\pgfplotsset{compat=1.18}
\usetikzlibrary{arrows.meta,positioning,calc,fit,backgrounds,patterns}
\usetikzlibrary{intersections}

\usepackage{pgfplotstable}
\usepgfplotslibrary{fillbetween}

\definecolor{myPurple}{RGB}{142,124,195}   
\definecolor{myYellow}{RGB}{230,184,74}    

\usepackage{lineno} 

\hypersetup{
    colorlinks=true,
    linkcolor=blue,
    citecolor=blue,
    urlcolor=blue,
    pdftitle={Parameterization-driven arbitrary Lagrangian–Eulerian method for large-deformation isogeometric fluid–structure interaction},
    pdfauthor={Ye Ji}
}

\graphicspath{{fig/}{data/}}

\begin{document}
\baselineskip11pt

\begin{frontmatter}

\title{Parameterization-driven arbitrary Lagrangian–Eulerian method for large-deformation isogeometric fluid–structure interaction}

\author[label_tud]{Jingya Li}
\ead{j.li-9@tudelft.nl}
\author[label_tud]{Ye Ji\corref{cor1}}
\ead{y.ji-1@tudelft.nl}
\author[label_tue]{Hugo Verhelst}
\ead{h.m.verhelst@tue.nl}
\author[label_tud_me]{Henk den Besten}
\ead{Henk.denBesten@tudelft.nl}
\author[label_tud]{Matthias M\"oller}
\ead{m.moller@tudelft.nl}

\cortext[cor1]{Corresponding author}
\address[label_tud]{Delft Institute of Applied Mathematics, Delft University of Technology, Delft, 2628 CD, the Netherlands}
\address[label_tue]{Department of Mechanical Engineering, Eindhoven University of Technology, Eindhoven, 5612 AE, the Netherlands}
\address[label_tud_me]{Maritime and Transport Technology Department, Delft University of Technology, Delft, 2628 CD, the Netherlands}

\begin{abstract}
    Body-fitted arbitrary Lagrangian–Eulerian (ALE) methods provide a sharp representation of the fluid–structure interface but rely on mesh-update strategies that incrementally deform a reference configuration. Because each update is applied on top of the previous one, distortion accumulates over time, and large structural motion eventually leads to loss of mesh validity. This path dependence is inherent to the mesh-motion formulation itself: it follows from updating each mesh from the previous one, and cannot be eliminated by refining the underlying extension operator. To address this issue, we reformulate the ALE mesh-motion problem in the isogeometric setting as a sequence of independent domain parameterization problems. At each time step, a multi-patch spline parameterization of the fluid domain is constructed from the current interface geometry; its validity and quality therefore depend on the current configuration alone, not on the cumulative history of mesh deformation. Mesh motion follows directly from this per-step parameterization, rather than being the primary unknown of an auxiliary mesh-extension problem driven by interface displacement, so the cumulative distortion characteristic of classical ALE is absent by construction. Three technical components realize this framework: (i) a barrier-function-based spline parameterization that enforces a strictly positive Jacobian at every time step; (ii) a tangential-slip reparameterization that handles sustained large rotations of closed interfaces, where no fixed boundary-to-parameter correspondence is admissible; and (iii) a constant-preserving quasi-interpolation operator for solution transfer between consecutive parameterizations, which preserves uniform states in the adopted advective ALE update. The framework is formulated in a partitioned setting, so that the fluid and structural solvers remain independent. Our numerical examples start with a purely kinematic example motivating the use of higher-order spline-based mesh formulations over conventional piecewise-linear methods in terms of the maximum allowed deformation the mesh can handle. Thereafter, we validate our method in combination with higher-order isogeometric mesh descriptions on two two-dimensional FSI benchmarks and a prescribed-rotation flow benchmark, covering standard and large-rotation regimes, and on three-dimensional falling-sphere and rotor problems. On the rotating-square benchmark, the tangential-slip strategy sustains continuous rotation beyond the admissible range of conventional mesh-deformation ALE methods with a fixed boundary correspondence. On the Turek–Hron and perpendicular-flap benchmarks, the method reproduces published reference displacements and responses, while the minimum scaled Jacobian remains bounded away from zero throughout the simulation. A prescribed-translation sphere test isolates mesh robustness under a $40\,\mathrm{m}$ displacement, and a fully coupled free-fall test reproduces the wall-corrected terminal velocity to within $0.29\%$. A three-dimensional rotor example further demonstrates that the framework extends naturally to volumetric spline parameterizations. Finally, we show that the per-step spline parameterizations can be used directly within a standard finite element solver, yielding results consistent with the isogeometric solution on the same sequence of geometries. This decoupling of geometry construction from field discretization positions the proposed framework as a solver-agnostic geometry component, compatible with both isogeometric and classical finite-element FSI pipelines. The focus is therefore on the geometry-construction module, rather than on claiming general accuracy or efficiency advantages of isogeometric analysis over finite-element remeshing strategies.
\end{abstract}

\begin{keyword}
    isogeometric analysis \sep
    fluid–structure interaction \sep
    arbitrary Lagrangian–Eulerian (ALE) \sep
    multi-patch spline parameterization \sep
    large-deformation mesh motion
\end{keyword}

\end{frontmatter}


\section{Introduction}
\label{sec:introduction}

Many problems in computational fluid dynamics are posed on domains
whose geometry evolves in time. These include free-surface and
multiphase flows~\citep{Katopodes2018FreeSurface}, flows past moving
or rotating bodies, and fluid--structure interaction (FSI) problems~\citep{Tezduyar2001MovingBoundaries,FSI_Book}. In all of these settings, the computational mesh must be updated at every time step to conform to the moving boundary, and the validity of this update determines the range of deformations that the simulation can sustain. FSI is particularly demanding in this respect: the fluid domain is not prescribed a priori but follows from the coupled fluid--structure response, so the mesh must track an interface that can accumulate large displacement or rotation over the course of the simulation.

FSI methods can be broadly classified by how they represent the fluid--structure interface \citep{Hou2012FSIReview,Takagi2012EulerianFSIReview}. \textit{Non-body-fitted} approaches discretize the fluid equations on a background grid that does not conform to the interface, avoiding the difficulties of a deforming mesh. This class includes immersed boundary methods \citep{Peskin2002IBM}, CutFEM and related unfitted formulations \citep{Schott2019CutFEM,Burman2025CutFEM}, and their isogeometric counterparts \citep{Marussig2018TrimmedIGA,Kamensky2015Immersogeometric}. Their flexibility, however, comes at the price of a non-conforming interface representation, which typically requires special techniques to recover near-wall quantities such as interface tractions, the very quantities that drive the FSI coupling. \textit{Body-fitted} approaches align the computational mesh with the interface and provide a sharp geometric representation of the boundary; for FSI, this sharpness is not an optional refinement but a direct enabler of accurate interface loads \citep{Hosters2018NURBSCoupling,li2026isogeometric}.

Within the body-fitted class, the arbitrary Lagrangian--Eulerian (ALE) formulation \citep{Hirt1974ALE,Donea2004ALE} is the most widely used and also the oldest framework. In ALE, mesh nodes on the interface move with the structure, while interior nodes are repositioned by an auxiliary mesh-motion strategy, most commonly a harmonic \citep{Johnson1994HE}, biharmonic \citep{Helenbrook2003BHE}, or linear-elastic extension of the interface displacement \citep{Tezduyar1992LE, Blom2000ElasticMesh}. Because the mesh itself evolves in time, any ALE discretization must additionally comply with the discrete geometric conservation law, a sufficient condition for the nonlinear stability of the time integration \citep{Farhat2001DGCL,Formaggia2004DGCL}.

This framework has since been transferred to the isogeometric setting \citep{HUGHES20054135}, giving rise to a family of IGA-ALE formulations in which spline and NURBS bases replace piecewise-polynomial shape functions for both geometry representation and solution approximation \citep{Bazilevs2008IGAFSI,Bazilevs2006IGAArterial,Bazilevs2013ALEVMS,Bazilevs2023IGAFSI}. Beyond the familiar benefits of geometric exactness at curved interfaces and higher-order inter-element continuity, IGA-ALE offers one feature that is especially pertinent to FSI coupling: the interface tractions emerge as naturally smooth fields, obviating the post-processing usually required to reconstruct them from piecewise-polynomial stresses \citep{Hosters2018NURBSCoupling,li2026isogeometric}. The present work is set within this IGA-ALE framework and adopts a partitioned coupling strategy: the fluid and structural subproblems are solved by independent solvers that exchange only interface displacements and tractions at each coupling step.

The principal limitation of classical body-fitted ALE methods lies in the mesh-motion problem itself. In standard mesh-deformation formulations, the mesh at time $t^{n+1}$ is obtained from the mesh at time $t^{n}$ by applying an incremental displacement field, typically the solution of an auxiliary extension problem driven by the interface displacement. The resulting update is path-dependent: the mesh at any instant is the composition of all previous updates, and a local distortion introduced at one step is carried forward to every subsequent step. For moderate interface motion this accumulation remains bounded, but under large cumulative displacement or rotation it is not: the Jacobian determinant of the ALE mapping eventually passes through zero, and the mesh becomes invalid. The top row of Figure~\ref{fig:paradigm_comparison} illustrates this behaviour for a rotating-square configuration.

Three concrete consequences follow. First, element distortion degrades the interface tractions that drive the coupled problem. Second, the Jacobian determinant of the ALE mapping is not guaranteed to remain positive: once it changes sign at any point of the mesh, the mapping is no longer a bijection and the simulation has to be stopped. Third, on distorted meshes the discrete geometric conservation law can no longer be enforced, and the associated nonlinear stability guarantees~\citep{Farhat2001DGCL} are lost. Remeshing and mesh-smoothing strategies can mitigate these effects \citep{Takizawa2021LENoAccum}, but the underlying cause persists: as long as the mesh at $t^{n+1}$ is constructed from the mesh at $t^{n}$, any refinement of the extension operator stays within the same path-dependent construction.

Several strategies have been proposed to push body-fitted ALE beyond the regime of small-to-moderate interface motion. Fixed-mesh ALE formulations \citep{Codina2009FMALE,BaigesCodina2010FMALE} avoid mesh motion altogether by solving the governing equations on a fixed background mesh, retaining the ALE mapping only in a virtual sense; this sidesteps the distortion problem but sacrifices the sharp body-fitted interface that motivates the ALE formulation itself. A more direct line of work keeps the body-fitted character of ALE but replaces mesh deformation with per-step parameterization: rather than updating the existing mesh, a new conforming mesh is generated at every time step from the current interface configuration. Recent work by Schwentner and Fries \citep{Schwentner2025MeshGenFSI} and by Schalk and Fries \citep{mesh_regeneration_Graz} shows that this strategy is viable for large-deformation FSI in the finite element setting, and Di Cristofaro et al.~\citep{DiCristofaro2025ALEFlying} demonstrate a related ALE approach with frequent remeshing and field remapping for three-dimensional FSI with large boundary motion, including flapping-wing and falling-sphere configurations. Within this line of work, the transfer of discrete fields between successive meshes emerges as a central concern; constant-preserving transfer operators \citep{Farrell2009ConservativeInterpolation,Pont2017Interpolation} are of particular interest because they preserve uniform states accross the transfer. For conservative ALE formulations, this transfer property must be complemented by a discrete geometric identity for evolving Jacobian and mesh flux \citep{Farhat2001DGCL}.

In the isogeometric setting, ALE-FSI formulations have so far been developed primarily within the classical mesh-deformation approach \citep{Bazilevs2008IGAFSI,Bazilevs2023IGAFSI}. While these formulations benefit from the geometric exactness and higher-order continuity of spline discretizations, they inherit the structural limitation identified in the previous section: the fluid mesh at any time instant is still the product of a cumulative sequence of incremental updates.

An alternative approach becomes available when the fluid discretization is based on spline representations of the geometry. 

The present work is aligned with this mesh-regeneration viewpoint, but its emphasis is different from conventional finite-element remeshing. The regenerated object is a spline parameterization of the fluid domain. This makes it possible to control validity directly through the positivity of the parameterization Jacobian, to introduce tangential slip at the level of boundary pre-images, and to reuse the same geometry update in both isogeometric and finite element flow solvers.

In isogeometric analysis, the physical domain is represented not by a collection of nodal coordinates, but by a spline- or NURBS-based parameterization that maps a fixed parametric domain onto the physical domain.
This distinction is more than representational. A finite element mesh is tied to its construction history: its quality depends on how it was obtained, and subsequent updates can only reposition its existing nodes. A spline parameterization, in contrast, is a self-contained geometric object that can be rebuilt from scratch at any time instant, using the current interface configuration as its sole input. The bottom row of Figure~\ref{fig:paradigm_comparison} illustrates this property on the same rotating-square configuration: at every snapshot, the mesh is an independently constructed parameterization rather than the accumulated result of all previous updates.

\begin{figure}[H]
    \centering
    \includegraphics[width=\linewidth]{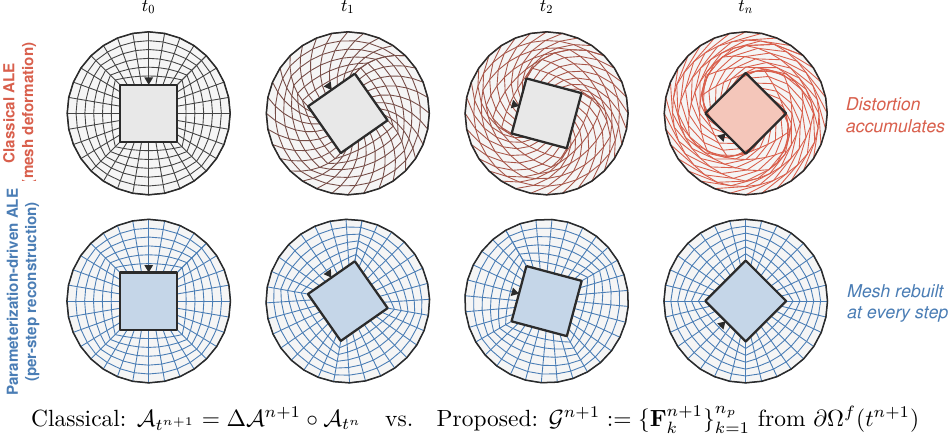}
\caption{Two approaches to ALE mesh motion under large interface displacement, illustrated on a rotating-square configuration. \textbf{Top:} in the classical mesh-deformation ALE approach, the mapping $\mathcal A_{t^{n+1}}$ is obtained by composing the preceding mapping $\mathcal A_{t^n}$ with an incremental update $\Delta\mathcal A^{n+1}$; the resulting path-dependent distortion may accumulate until the mesh becomes invalid. \textbf{Bottom:} in the proposed parameterization-driven approach, the spline geometry $\mathcal G^{n+1}:=\{\mathbf F_k^{n+1}\}_{k=1}^{n_p}$ is reconstructed directly from the current interface configuration $\partial\Omega^f(t^{n+1})$, independently of $\mathcal G^n$. The small triangular marker on the inner square is used to track its cumulative rotation.}
    \label{fig:paradigm_comparison}
\end{figure}

We therefore reformulate the ALE mesh-motion problem. Instead of updating the fluid mesh incrementally from one time step to the next, we construct it at each step as an independent spline parameterization of the current fluid domain, using the current interface configuration as input. The past motion of the interface is recorded by the structural solver alone; the fluid mesh at time $t^n$ does not depend on the one at $t^{n-1}$. The mesh velocity that couples the fluid equations to the moving domain is then recovered from successive parameterizations rather than prescribed through an auxiliary mesh-extension problem.

This reformulation has consequences that reach beyond robustness under large deformations. The cumulative distortion typical of classical ALE does not arise at all since each parameterization is built independently. The solution transfer between consecutive parameterizations is performed by a constant-preserving quasi-interpolation operator, which preserves uniform states in the adopted advective ALE update. While this transfer property alone does not establish the fully discrete geometric conservation law, the latter is recovered when it is combined with a consistent discrete relation between the evolving Jacobian and the mesh flux. Consequently, the discrete geometric conservation law holds algebraically, and the associated nonlinear stability guarantees~\citep{Farhat2001DGCL} are retained. Finally, because the parameterization depends only on the interface geometry and not on the fluid discretization, the same sequence of parameterizations can drive either an isogeometric or a standard finite element fluid solver. The purpose of the present work is therefore not to claim a universal accuracy or efficiency advantage of IGA over finite-element remeshing strategies. Instead, the contribution is a spline-parameterization-driven ALE geometry module whose validity is controlled at the level of the parameterization and which can be reused by different downstream flow discretizations. To the best of our knowledge, this view of ALE mesh motion has not been pursued in the isogeometric FSI literature, even though the required spline parameterization machinery for complex domains is now available~\citep{Ji2021BarrierNURBS}. 

Our main contributions are as follows.
\begin{itemize}
    \item We reformulate the ALE mesh-motion problem in the isogeometric setting as a sequence of independent domain parameterization problems. At every time step, we build a new multi-patch spline parameterization of the fluid domain directly from the current interface, rather than updating the previous mesh. This replaces the path-dependent mesh-deformation approach, and therefore the cumulative distortion it carries, with an effectively history-independent construction.
    \item For closed domains that rotate through large cumulative angles, a fixed boundary-to-parameter correspondence simply cannot be maintained. Once the body has rotated far enough, it becomes incompatible with any valid mesh. We address this by letting the parametric pre-images slide tangentially along the interface. The resulting scheme handles rotation regimes that, to our knowledge, cannot be reached by any mesh-deformation ALE formulation.
    \item We use a constant-preserving quasi-interpolation operator to transfer the discrete solution between successive parameterizations.  In the adopted advective ALE update, exact reproduction of constants yields free-stream preservation. 
    \item Because each parameterization is built as a pure geometric object, nothing in the construction is tied to the field solver. We exploit this by feeding the same sequence of parameterizations into a standard finite element solver and recovering results consistent with the isogeometric computation.
\end{itemize}

The remainder of the paper is organized as follows. \Cref{sec:governing-equations} states the governing equations of the fluid and structural subproblems together with the corresponding ALE formulation. \Cref{sec:numerical-framework} develops the numerical framework: the isogeometric discretization of the fluid equations, the per-step spline parameterization of the fluid domain together with the tangential-slip reparameterization for closed configurations, the recovery of the ALE mesh velocity from two consecutive parameterizations, and the constant-preserving solution-transfer operator used for free-stream preservation in the adopted advective ALE update. \Cref{sec:numerical-examples} reports two- and three-dimensional numerical experiments: standard FSI benchmarks, a rotating-square configuration that drives the inner body through a full revolution, prescribed-translation and freely falling spheres, a three-dimensional rotor problem, and a test in which the regenerated parameterizations are coupled to a classical finite element solver. \Cref{sec:conclusions} summarizes the main findings and outlines directions for future work.

\section{Governing equations}
\label{sec:governing-equations}

In this paper, we consider a fluid--structure interaction problem in which the fluid occupies a time-dependent domain $\Omega^f(t) \subset \mathbb{R}^d$ ($d \in \{2, 3\}$), separated from the surrounding structure $\Omega^s(t)$ by a moving interface $\Gamma_{fs}(t) = \partial \Omega^f(t) \cap \partial \Omega^s(t)$. Throughout, superscripts $f$ and $s$ distinguish fluid and structural quantities, and the subscript $0$ denotes quantities in the reference configuration.

\subsection{Incompressible flow}
\label{sec:incompressible-flow}

The fluid motion is governed by the incompressible Navier--Stokes equations in the time-dependent domain $\Omega^f(t)$:
\begin{subequations}
    \begin{align}
    \rho^f \left( \frac{\partial \mathbf{u}^f}{\partial t} 
    + (\mathbf{u}^f \cdot \nabla) \mathbf{u}^f \right) 
    &= \nabla \cdot \boldsymbol{\sigma}^f + \mathbf{f}^f \quad \text{in } \Omega^f(t), \label{eq:momentum} \\
    \nabla \cdot \mathbf{u}^f &= 0 \quad \text{in } \Omega^f(t),
    \label{eq:continuity}
    \end{align}
\end{subequations}
where $\mathbf{u}^f : \Omega^f(t) \times [0,T] \to \mathbb{R}^d$ is 
the fluid velocity, $\rho^f$ is the (constant) density, 
$\mathbf{f}^f$ is a body force, and $\boldsymbol{\sigma}^f$ is the 
Cauchy stress tensor. 

For an incompressible Newtonian fluid, the stress tensor is given by
\begin{equation}
    \boldsymbol{\sigma}^f = -p^f \mathbf{I} + 2\mu^f \mathbb{D}(\mathbf{u}^f),
\end{equation}
with pressure $p^f$, dynamic viscosity $\mu^f = \rho^f \nu^f$, and symmetric strain-rate tensor $\mathbb{D}(\mathbf{u}^f) = \tfrac{1}{2}(\nabla \mathbf{u}^f + \nabla \mathbf{u}^{f \top})$. 

The system is closed by an initial condition $\mathbf{u}^f(\cdot, 0) = \mathbf{u}_0^f$ in $\Omega^f(0)$ together with Dirichlet and Neumann conditions on the inflow and outflow boundaries; the interface conditions on $\Gamma_{fs}(t)$ are deferred to \Cref{sec:coupling-conditions}.

\subsection{Arbitrary Lagrangian–Eulerian formulation}
\label{sec:ale-formulation}

The time-dependence of $\Omega^f(t)$ requires a formulation of the moving-grid. To this end, we adopt the arbitrary Lagrangian--Eulerian (ALE) framework. A reference configuration $\Omega_0^f$ is mapped onto the current fluid domain at each time instant through the application of the following:
\begin{equation}
    \mathcal{A}: \Omega_0^f \times [0, T] \to \mathbb{R}^d,
    \qquad
    \mathbf{x} = \mathcal{A}(\mathbf{X}, t) 
    = \mathbf{X} + \mathbf{u}^{\mathrm{ALE}}(\mathbf{X}, t),
\end{equation}
where $\mathbf{u}^{\mathrm{ALE}}$ is the mesh displacement field, and $\Omega^f(t) := \mathcal{A}(\Omega_0^f, t)$. The mesh velocity is the time derivative taken at fixed reference point $\mathbf{X}$,
\begin{equation}
    \hat{\mathbf{w}}(\mathbf{X}, t) 
    := \left. \frac{\partial \mathcal{A}}{\partial t} 
    \right|_{\mathbf{X}}
    = \left. \frac{\partial \mathbf{u}^{\mathrm{ALE}}}{\partial t} 
    \right|_{\mathbf{X}},
\end{equation}
which pushes forward to the current configuration as $\mathbf{w}(\mathbf{x}, t) = \hat{\mathbf{w}}(\mathcal{A}^{-1}(\mathbf{x}, t), t)$.

Under the ALE mapping, the time derivative in the momentum equation is evaluated at fixed $\mathbf{X}$ rather than fixed $\mathbf{x}$. The fluid equations become

\begin{subequations}
\label{eq:ale-ns}
\begin{align}
    \rho^f \left( 
    \left. \frac{\partial \mathbf{u}^f}{\partial t} \right|_{\mathbf{X}} 
    + \big( (\mathbf{u}^f - \mathbf{w}) \cdot \nabla \big) \mathbf{u}^f 
    \right)
    &= \nabla \cdot \boldsymbol{\sigma}^f + \mathbf{f}^f
    && \text{in } \Omega^f(t), 
    \label{eq:ale-momentum} \\
    \nabla \cdot \mathbf{u}^f 
    &= 0
    && \text{in } \Omega^f(t).
    \label{eq:ale-continuity}
\end{align}
\end{subequations} 
The quantity $\mathbf{u}^f - \mathbf{w}$ is the relative velocity of the fluid with respect to the moving mesh.

On the interface, consistency of the ALE mapping with the structural motion requires
\begin{equation}
    \mathbf{u}^{\mathrm{ALE}}(\mathbf{X}, t) 
    = \mathbf{d}^s(\mathbf{X}, t) 
    \quad \text{on } \Gamma_{fs,0} \times [0, T],
\end{equation}
where $\mathbf{d}^s$ is the structural displacement defined in \Cref{sec:structural-mechanics}. Within the interior of the fluid domain, $\mathbf{u}^{\mathrm{ALE}}$ is not uniquely determined and must be constructed such that the ALE mapping is locally invertible. Denoting the Jacobian matrix of $\mathcal{A}$ by
\begin{equation}
    \bm{\mathcal{J}}^{\mathrm{ALE}}(\mathbf{X}, t) 
    := \nabla_{\mathbf{X}} \mathcal{A}(\mathbf{X}, t),
    \label{eq:jacobian-ale}
\end{equation}
this reads
\begin{equation}
    \det \bm{\mathcal{J}}^{\mathrm{ALE}}(\mathbf{X}, t) > 0 
    \quad \text{for all } (\mathbf{X}, t) \in \Omega_0^f \times [0, T].
    \label{eq:bijectivity-continuous}
\end{equation}
The discretization strategy used to ensure \eqref{eq:bijectivity-continuous} is developed in \Cref{sec:barrier-patch-method}; the corresponding discrete criterion is stated in \Cref{sec:bijectivity-criterion}.

Figure~\ref{fig:fsi_configurations} summarizes the reference and current configurations together with the mappings that connect them on the fluid and structural sides.

\begin{figure}[H]
    \centering
    \includegraphics[width=\linewidth]{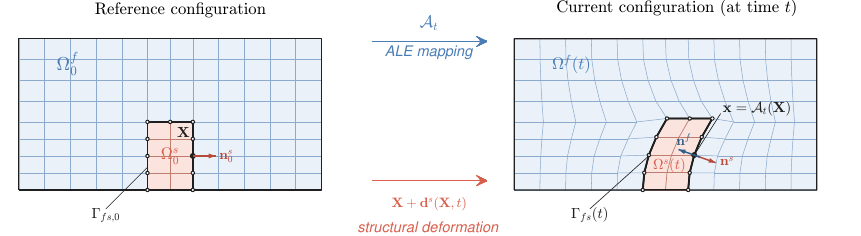}
    \caption{Configurations and mappings in the ALE--FSI framework, shown on a representative conforming parameterization in which every node on the interface belongs to both the fluid and the structural parameterization (open circles). \textbf{Left:} reference configurations of the fluid ($\Omega_0^f$) and the structure ($\Omega_0^s$), sharing the reference interface $\Gamma_{fs,0}$; $\mathbf{X}$ denotes a reference interface point, and $\mathbf{n}_0^{s}$ is the outward unit normal on the solid. \textbf{Right:} current configurations at time $t$, obtained from the reference by the ALE mapping $\mathcal{A}_t$ on the fluid side and by the structural deformation $\mathbf{X}\mapsto\mathbf{X}+\mathbf{d}^{s}(\mathbf{X},t)$ on the solid side. The two mappings agree on $\Gamma_{fs,0}$, so the current interface $\Gamma_{fs}(t)$ is the same curve from the fluid and structural sides. The point $\mathbf{x}=\mathcal{A}_t(\mathbf{X})$ lies on $\Gamma_{fs}(t)$ with outward normals $\mathbf{n}^{s}$ and $\mathbf{n}^{f}=-\mathbf{n}^{s}$.}
    \label{fig:fsi_configurations}
\end{figure}

\subsection{Structural mechanics}
\label{sec:structural-mechanics}

The structural response is described in a total Lagrangian framework with respect to a reference configuration $\Omega_0^s \subset \mathbb{R}^d$. The unknowns are the displacement field $\mathbf{d}^s : \Omega_0^s \times [0,T] \to \mathbb{R}^d$ and the corresponding velocity $\mathbf{v}^s := \partial \mathbf{d}^s / \partial t$. Balance of linear momentum reads
\begin{equation}
    \rho_0^s \frac{\partial^2 \mathbf{d}^s}{\partial t^2}
    = \nabla_0 \cdot \mathbf{P}^s + \rho_0^s \mathbf{b}^s
    \quad \text{in } \Omega_0^s \times [0,T],
    \label{eq:structure-motion}
\end{equation}
where $\rho_0^s$ is the reference mass density, $\mathbf{b}^s$ is a body force per unit mass, $\nabla_0$ denotes the gradient with respect to reference coordinates, and $\mathbf{P}^s := \mathbf{F} \mathbf{S}^s$ is the first Piola--Kirchhoff stress tensor.

The deformation gradient is defined by $\mathbf{F} := \mathbf{I} + \nabla_0 \mathbf{d}^s$, and the Green--Lagrange strain tensor by
\begin{equation}
    \mathbf{E} := \tfrac{1}{2}(\mathbf{F}^{\top}\mathbf{F} - \mathbf{I}),
\end{equation}
which captures geometric nonlinearity under large deformations. The structure is modeled as a St.~Venant--Kirchhoff material, for which the second Piola--Kirchhoff stress is
\begin{equation}
    \mathbf{S}^s = \lambda^s \operatorname{tr}(\mathbf{E})\,\mathbf{I} 
    + 2 \mu^s \mathbf{E},
\end{equation}
where $\lambda^s, \mu^s$ are the Lam\'{e} parameters. The associated Cauchy stress, used in the interface traction condition below, is obtained by the standard push-forward,
\begin{equation}
    \boldsymbol{\sigma}^s = J^{-1} \mathbf{F} \mathbf{S}^s \mathbf{F}^{\top},
    \qquad J := \det \mathbf{F}.
\end{equation}

The reference boundary $\partial \Omega_0^s$ is partitioned into three non-overlapping parts: a Dirichlet part $\Gamma_D^s$, a Neumann part $\Gamma_N^s$, and the fluid--structure interface $\Gamma_{fs,0} \subset \partial \Omega_0^s$. On $\Gamma_{fs,0}$, the traction is prescribed by the fluid and is treated in \Cref{sec:coupling-conditions}. On the remaining parts,
\begin{subequations}
\label{eq:structure-bc}
\begin{align}
    \mathbf{d}^s &= \mathbf{g}^s 
    \quad \text{on } \Gamma_D^s \times [0,T], \\
    \mathbf{P}^s \mathbf{n}_0^s &= \mathbf{h}^s 
    \quad \text{on } \Gamma_N^s \times [0,T],
\end{align}
\end{subequations}
where $\mathbf{g}^s$ is a prescribed displacement, $\mathbf{h}^s$ is a prescribed reference traction, and $\mathbf{n}_0^s$ is the outward unit normal on the reference configuration. Initial data $\mathbf{d}^s(\cdot, 0) = \mathbf{d}_0^s$ and $\mathbf{v}^s(\cdot, 0) = \mathbf{v}_0^s$ close the structural subproblem.

\subsection{Coupling conditions}
\label{sec:coupling-conditions}

At each time $t$, the fluid and structural subproblems are linked through conditions on the interface $\Gamma_{fs}(t)$. The partitioned formulation solves each subproblem independently and uses these conditions to transfer information at each coupling step \citep{li2026isogeometric}.

\paragraph{Kinematic coupling} Continuity of velocity at the interface requires
\begin{equation}
    \mathbf{u}^f(\mathbf{x}, t) = \mathbf{v}^s(\mathbf{X}, t)
    \quad \text{for } \mathbf{x} = \mathbf{X} + \mathbf{d}^s(\mathbf{X}, t), 
    \; \mathbf{X} \in \Gamma_{fs,0},
    \label{eq:velocity-continuity}
\end{equation}
where $\mathbf{v}^s = \partial \mathbf{d}^s / \partial t$ is the structural velocity. Consistently, the ALE mapping tracks the structural displacement on the interface:
\begin{equation}
    \mathbf{u}^{\mathrm{ALE}}(\mathbf{X}, t) = \mathbf{d}^s(\mathbf{X}, t)
    \quad \text{on } \Gamma_{fs,0} \times [0, T].
    \label{eq:ale-consistency}
\end{equation}
Condition \eqref{eq:ale-consistency} determines the ALE mesh displacement on $\Gamma_{fs,0}$; its extension into the interior is the subject of \Cref{sec:barrier-patch-method}.

\paragraph{Dynamic coupling} Equilibrium of tractions across the interface reads
\begin{equation}
    \boldsymbol{\sigma}^f(\mathbf{x}, t) \mathbf{n}^f 
    + \boldsymbol{\sigma}^s(\mathbf{x}, t) \mathbf{n}^s = \mathbf{0}
    \quad \text{on } \Gamma_{fs}(t),
    \label{eq:traction-continuity}
\end{equation}
where $\mathbf{n}^f$ and $\mathbf{n}^s = -\mathbf{n}^f$ are the outward unit normals on the current fluid and structural interfaces. Because the structural subproblem is posed in the reference configuration, \eqref{eq:traction-continuity} is recast using Nanson's formula $J\, \mathbf{F}^{-\top} \mathbf{n}_0^s\, \mathrm{d} A_0 = \mathbf{n}^s\, \mathrm{d} A$, which yields the reference-configuration form
\begin{equation}
    \mathbf{P}^s \mathbf{n}_0^s 
    = -\, J\, \boldsymbol{\sigma}^f \mathbf{F}^{-\top} \mathbf{n}_0^s
    \quad \text{on } \Gamma_{fs,0} \times [0, T].
    \label{eq:traction-reference}
\end{equation}
Equation \eqref{eq:traction-reference} acts as a Neumann boundary condition for the structural subproblem: the traction applied by the fluid is evaluated on the deformed interface and pulled back to the reference configuration.

\section{Parameterization-driven ALE framework}
\label{sec:numerical-framework}

This section develops the numerical framework that realizes the parameterization-driven ALE formulation. Four technical components make up the core of the framework: the isogeometric discretization of the fluid equations (\Cref{sec:iga}); the per-step spline parameterization of the fluid domain (\Cref{sec:mesh-generation}); the ALE mesh velocity recovered from successive parameterizations (\Cref{sec:ale-mesh-velocity}); and the constant-preserving solution transfer that secures the discrete geometric conservation law (\Cref{sec:solution-transfer}). A discrete criterion for the bijectivity of the resulting patch mappings is collected in \Cref{sec:bijectivity-criterion}.

\subsection{Isogeometric discretization for fluid subproblem}
\label{sec:iga}

We discretize the fluid subproblem using isogeometric analysis (IGA) \citep{HUGHES20054135}, with the specific choices detailed below: a multi-patch NURBS representation of the geometry and a Taylor--Hood-type B-spline pair for velocity and pressure.

\paragraph{B-spline and NURBS basis}
For a given open knot vector $\Xi = \{\xi_1 \leq \xi_2 \leq \cdots \leq \xi_{n+p+1}\}$, the univariate B-spline basis $\{B_{i,p}\}_{i=1}^{n}$ of degree $p$ is constructed by the Cox--de~Boor recursion \citep{HUGHES20054135,piegl2012nurbs}. Tensor-product B-spline bases on the parametric domain $\mathcal{P} = [0,1]^d$ are obtained as products of univariate bases in each parametric direction. Introducing positive weights yields the corresponding NURBS basis functions $\{N_i(\boldsymbol{\xi})\}_{i=1}^{n}$, and the geometry of each patch is represented as
\begin{equation}
    \mathbf{x}(\boldsymbol{\xi}) 
    = \sum_{i=1}^{n} N_i(\boldsymbol{\xi})\,\mathbf{x}_i,
    \qquad \boldsymbol{\xi} \in \mathcal{P},
\end{equation}
where $\{\mathbf{x}_i\}_{i=1}^{n} \subset \mathbb{R}^{d}$ are the control points.

\paragraph{Multi-patch discretization}
The fluid domain $\Omega^f(t)$ is represented as a collection of $n_p$ conforming patches (see \Cref{sec:mesh-generation}), with patch $k$ parameterized by a time-dependent geometry map
\begin{equation}
    \mathbf{F}_k(\boldsymbol{\xi}, t) 
    = \sum_{i=1}^{n_k} N_i(\boldsymbol{\xi})\,\mathbf{x}_{k,i}(t),
    \qquad \mathbf{F}_k(\cdot, t): \mathcal{P} \to \Omega_k(t),
\end{equation}
where $\{\mathbf{x}_{k,i}(t)\}$ are the control points of patch $k$ at time $t$. The discrete solution spaces are constructed by composition with $\mathbf{F}_k$, and global spaces are assembled by identifying control points shared by adjacent patches.
At time $t^n$, the multi-patch spline geometry is denoted by
\begin{equation}
    \mathcal G^n:=\{\mathbf F_k^n\}_{k=1}^{n_p}.
    \label{eq:spline-geometry-collection}
\end{equation}

\paragraph{Taylor--Hood discretization}
To satisfy the inf-sup stability condition required for a stable discretization of the incompressible Navier--Stokes equations, we adopt a Taylor--Hood-type isogeometric velocity--pressure pair~\citep{Hooseini2015TaylorHood}. The velocity space $V_h$ consists of B-splines of degree $p$ and regularity $C^{p-1}$, while the pressure space $Q_h$ consists of B-splines of degree $p-1$ and regularity $C^{p-2}$, both defined on the same physical element partition. This pair is inf-sup stable and delivers optimal-order approximation.

For $p \geq 2$, the $C^{p-1}$ regularity of the velocity space makes the velocity gradient globally continuous on each patch. The viscous stress field $\boldsymbol{\sigma}^f$ is therefore continuous within each patch, and interface tractions can be evaluated directly at the patch boundary without the projection or recovery procedures required on $C^0$ Lagrange elements \citep{Hosters2018NURBSCoupling,li2026isogeometric}.

\subsection{Per-step spline domain parameterization}
\label{sec:mesh-generation}

Using the multi-patch setting of \Cref{sec:iga}, the patch mappings $\{\mathbf{F}_k(\cdot, t)\}_{k=1}^{n_p}$ are constructed anew at each time step from the current boundary data, without reference to a deforming background mesh. This construction has two ingredients: a barrier-patch optimization procedure that produces a bijective, well-shaped parameterization from given boundary curves (\Cref{sec:barrier-patch-method}); and two application patterns—one for the open fluid domains arising in FSI coupling (\Cref{sec:open-domain}), and one for the closed annular domains used in our prescribed-motion benchmark (\Cref{sec:closed-domain}). In both settings the ingredient of \Cref{sec:barrier-patch-method} is applied patch-wise; the two settings differ only in how the patch boundaries are assembled from the interface and from fixed geometry.

\subsubsection{Barrier-patch method}
\label{sec:barrier-patch-method}

The barrier-patch method \citep{Ji2021BarrierNURBS, Ji2022PenaltyParam} builds a bijective and well-shaped NURBS parameterization of a single patch from prescribed boundary control points. Let $\mathbf{F}_k$ be the patch map of patch $k$, with boundary control points fixed by the surrounding geometry (specified separately for open and closed domains in \Cref{sec:open-domain,sec:closed-domain}). The interior control points $\{\mathbf{x}_{k,i}\}_{i \in \mathcal{I}_k}$ are determined by a two-stage optimization, where $\mathcal{I}_k$ indexes the interior degrees of freedom of patch $k$.

\paragraph{Stage 1: Bijectivity at quadrature points}
Let
\begin{equation}
    \bm{\mathcal{J}}_k(\boldsymbol{\xi}) 
    := \frac{\partial \mathbf{F}_k}{\partial \boldsymbol{\xi}}(\boldsymbol{\xi}),
    \qquad \boldsymbol{\xi} \in \mathcal{P},
    \label{eq:jacobian-patchwise}
\end{equation}
denote the Jacobian matrix of the patch map, and let $\{\boldsymbol{\xi}_q\}_{q=1}^{n_q} \subset \mathcal{P}$ be a set of sampling points (taken as the Gauss quadrature points of the element-wise integration rule). We first eliminate fold-over configurations by minimizing
\begin{equation}
    E_{\mathrm{fold}}(\mathbf{F}_k) 
    = \sum_{q=1}^{n_q} 
    \bigl[\,\delta - \det \bm{\mathcal{J}}_k(\boldsymbol{\xi}_q)\,\bigr]_{+}^{2},
    \label{eq:barrier-foldover}
\end{equation}
where $[x]_+ := \max(x, 0)$ denotes the positive part and $\delta > 0$ is a small threshold. The functional \eqref{eq:barrier-foldover} is strictly positive whenever the discrete bijectivity condition $\det \bm{\mathcal{J}}_k(\boldsymbol{\xi}_q) \geq \delta$ fails at any sampling point, and vanishes otherwise. Minimization is therefore halted once a configuration with $E_{\mathrm{fold}} = 0$ is reached.

\paragraph{Stage 2: Quality improvement}
Once bijectivity is secured, element quality is improved by minimizing the Winslow-plus-area functional
\begin{equation}
    E_{\mathrm{qual}}(\mathbf{F}_k) 
    = \lambda_1 \int_{\mathcal{P}} 
    \frac{\|\bm{\mathcal{J}}_k\|_F^2}{\det \bm{\mathcal{J}}_k} \, 
    d\boldsymbol{\xi} 
    + \lambda_2 \int_{\mathcal{P}} 
    \bigl(\det \bm{\mathcal{J}}_k\bigr)^2 \, d\boldsymbol{\xi},
    \label{eq:barrier-quality}
\end{equation} 
where $\|\cdot\|_F$ denotes the Frobenius norm. The first term is the classical Winslow functional, which penalizes angular distortion and acts as a smoother on the control net; the second term promotes uniform cell areas. The positivity of $\det \bm{\mathcal{J}}_k$ established in Stage~1 is preserved throughout Stage~2 because the Winslow integrand becomes singular as $\det \bm{\mathcal{J}}_k \to 0^+$. The weights $\lambda_1, \lambda_2 > 0$ are taken as $\lambda_1 = 1$ and $\lambda_2 = 1/A^2$, where $A = \int_{\mathcal{P}} \det\bm{\mathcal{J}}_k,d\boldsymbol{\xi}$ is the (scaled) physical area.  \Cref{sec:numerical-examples}.

Both stages are minimized over the interior control points using L-BFGS, with the boundary control points held fixed. The integrals in \eqref{eq:barrier-quality} are evaluated by the same element-wise Gauss rule used in \eqref{eq:barrier-foldover}. The two-stage procedure returns a patch map $\mathbf{F}_k$ whose discrete bijectivity is certified by Stage~1 and whose element quality has been improved by Stage~2; the same procedure is applied independently to each patch at each time step.

\subsubsection{Application to open FSI domains}
\label{sec:open-domain}

For fluid domains arising in fluid--structure interaction, the fluid occupies the region exterior to a deforming body and interior to fixed walls. We partition the fluid domain into two layers:
\begin{itemize}
    \item a \emph{near-body layer} of $n_s$ patches, each carrying one segment of the fluid--structure interface $\Gamma_{fs}(t)$ as its inner boundary;
    \item an \emph{extension layer} of $n_{\mathrm{ext}}$ patches that connects the near-body layer to the fixed outer walls (inflow, outflow, and far-field boundaries).
\end{itemize}
Only the near-body patches are reparameterized at each time step; the extension patches are fixed once and for all at initialization. The two layers are joined conformally, and the control points on the shared interface between the layers are held fixed throughout the simulation.

\paragraph{Near-body patch boundaries}
For near-body patch $k$, the boundary control points are assembled from three sources:
\begin{itemize}
    \item the \emph{inner} boundary 
    $\boldsymbol{\gamma}_k^{\mathrm{in}}(\cdot, t^n)$ tracks the 
    corresponding segment of $\Gamma_{fs}(t^n)$ and is supplied 
    by the structural subproblem through the ALE consistency 
    condition \eqref{eq:ale-consistency};
    \item the \emph{outer} boundary 
    $\boldsymbol{\gamma}_k^{\mathrm{out}}$ lies on the fixed 
    interface with the extension layer and is therefore 
    time-independent;
    \item the \emph{side} boundaries are shared with the two 
    neighboring near-body patches and are constructed once at initialization, together with the rest of the near-body layer, so that only their inner boundaries evolve with the fluid--structure interface.
\end{itemize}
Given these four boundary curves, the interior control points of $\mathbf{F}_k$ are obtained by the barrier-patch procedure of \Cref{sec:barrier-patch-method}.

\paragraph{Interface transfer}
At each time step, the structural solver returns the updated positions of interface control points on $\Gamma_{fs}(t^n)$. In the spline-based coupling setting adopted in literature \citep{li2026isogeometric,Hosters2018NURBSCoupling}, the fluid and structural solvers exchange information through NURBS/spline representation of the interface. The inner boundary curves $\{\boldsymbol{\gamma}_k^{\mathrm{in}}(\cdot, t^n)\}$ are assembled from these control points and passed to the barrier-patch optimization as fixed boundary data.

\subsubsection{Application to closed domains: ruled surface and tangential slip}
\label{sec:closed-domain}

For closed annular configurations, in which a simply connected body is fully surrounded by fluid, the fluid domain is discretized using $n_p = n_s$ conforming patches without extension layer. The inner and outer boundaries of the annulus are represented as closed B-spline curves $\boldsymbol{\gamma}^{\mathrm{in}}(\cdot, t)$ and $\boldsymbol{\gamma}^{\mathrm{out}}(\cdot)$, both parameterized over $s \in [0, 1]$ with periodic identification $s = 0 \sim s = 1$.

\paragraph{Ruled surface initialization}
An initial global parameterization of the annular domain is constructed as the ruled surface between the two boundary curves,
\begin{equation}
    \mathbf{F}^{\mathrm{init}}(s, \eta, t) 
    = (1 - \eta)\, \boldsymbol{\gamma}^{\mathrm{out}}(s) 
    + \eta\, \boldsymbol{\gamma}^{\mathrm{in}}(s, t),
    \qquad (s, \eta) \in [0, 1]^2,
    \label{eq:ruled-surface}
\end{equation}
which is then partitioned into $n_s$ conforming patches by subdividing the parameter domain along $s$. The barrier-patch procedure of \Cref{sec:barrier-patch-method} is applied patch-wise to improve element quality; boundary control points on $\boldsymbol{\gamma}^{\mathrm{in}}(\cdot, t)$ and $\boldsymbol{\gamma}^{\mathrm{out}}(\cdot)$ remain fixed, while interior control points are updated.

\paragraph{Tangential slip}
The ruled-surface construction \eqref{eq:ruled-surface} imposes a fixed parametric correspondence $s \mapsto s$ between inner and outer boundaries. Under prescribed rigid-body rotation of the inner body, this correspondence accumulates over time: a material point on the inner boundary rotates with the body, while the corresponding outer point remains stationary, producing patches that shear more and more with each step. After a finite number of steps, the patch maps become highly skewed and eventually non-bijective, and the barrier stage of \Cref{sec:barrier-patch-method} can no longer recover a valid parameterization \citep{ji2024boundary}.

We remove this accumulation by reparameterizing the inner boundary at each time step before constructing the ruled surface. Let $\delta^n \in [0, 1)$ denote a normalized tangential shift chosen (below) to align the inner boundary with the outer boundary. The shifted inner boundary is
\begin{equation}
    \tilde{\boldsymbol{\gamma}}^{\mathrm{in}}(s, t^n) 
    := \boldsymbol{\gamma}^{\mathrm{in}}\bigl((s + \delta^n) 
    \bmod 1,\; t^n\bigr), 
    \qquad s \in [0, 1],
    \label{eq:shifted-net}
\end{equation}
so that the physical curve is unchanged, while the location of the parametric seam is moved. In the implementation this is done by splitting the closed spline curve at the shifted parameter value and then merging the two pieces in the opposite order. The ruled surface \eqref{eq:ruled-surface} is then constructed with $\tilde{\boldsymbol{\gamma}}^{\mathrm{in}}$ instead of $\boldsymbol{\gamma}^{\mathrm{in}}$.

For the prescribed rigid-body rotations considered in this work, the shift is chosen directly from the cumulative rotation angle. Let $\theta^n$ be the rotation angle at time $t^n$. We use
\begin{equation}
    \delta^n =
    \left(-\frac{\theta^n}{2\pi}\right) \bmod 1 .
    \label{eq:shift-closed-form}
\end{equation}
The sign is chosen so that the parameterization slips in the direction opposite to the physical rotation. In this way, the correspondence between the inner and outer boundaries remains close to the initial one instead of accumulating tangential shear.

For piecewise-smooth inner boundaries with $n_c$ geometric corners distributed evenly in parameter, the corner parameters are
\begin{equation}
    \mathcal{C} = \left\{\, \tfrac{j}{n_c} \,:\, j = 0, 1, \ldots, n_c - 1 \,\right\}
    \label{eq:corner-parameters}
\end{equation}
($n_c = 4$ in the rotating-square and rotor examples of \Cref{sec:flow-past-square,sec:3d-rotor}). When $\delta^n$ falls close to an element of $\mathcal{C}$, the split-and-merge step cuts $\boldsymbol{\gamma}^{\mathrm{in}}(\cdot, t^n)$ very near a genuine corner, producing a reparameterized curve with a spurious $C^0$ point next to the real corner. We avoid this by displacing $\delta^n$ away from corners. Let $d^n$ denote the signed periodic distance from $\delta^n$ to the nearest corner parameter. When $|d^n| < \varepsilon_c$, we apply the smooth local offset
\begin{equation}
    \delta^n \leftarrow \delta^n + \varepsilon_s\left[1 - \left(\frac{d^n}{\varepsilon_c}\right)^2 \right]^2,
    \qquad
    \varepsilon_c = 0.04,\quad
    \varepsilon_s = 0.025,
    \label{eq:corner-bump}
\end{equation}
followed by reduction modulo $1$. The values satisfy $\varepsilon_c < 1/(2 n_c)$ and $\varepsilon_s < \varepsilon_c$, so that bump supports of adjacent corners do not overlap and the correction cannot push $\delta^n$ past the nearest corner. The bump $[1-(d^n/\varepsilon_c)^2]^2$ is the lowest-order polynomial that vanishes to first order at $|d^n| = \varepsilon_c$, ensuring that $\delta^n$ depends continuously on $\theta^n$.

As a final safeguard, when $\delta^n$ is within machine precision of $0$ or $1$, the split-and-merge step reduces to the identity up to round-off and is skipped. The reparameterization \eqref{eq:shifted-net}--\eqref{eq:corner-bump} enables the construction of well-shaped meshes under arbitrarily large cumulative tangential motion; this is demonstrated on the rotating-square benchmark in \Cref{sec:flow-past-square}.

\textit{Scope of the tangential-slip strategy.} The tangential-slip construction used here assumes that the closed interface admits a stable cyclic boundary correspondence. This is the case for convex or nearly convex bodies such as the rotating square and the rotor geometry considered later. For strongly non-convex interfaces, very slender structures, or sharp concave corners, a global tangential shift may lead to a non-uniform redistribution of the parameter pre-images and may no longer be sufficient to maintain patch quality. In such cases, we expect that the single global shift would have to be replaced by a localized redistribution of the pre-images (for instance weighted by local arc length and curvature, or applied per patch rather than globally), which is left to future work. The minimum scaled Jacobian $\mathcal{J}_{\min}$ monitored throughout the simulations provides a direct indicator of whether any such extension preserves bijectivity and mesh quality.

\subsection{ALE mesh velocity}
\label{sec:ale-mesh-velocity}

Because the patch maps are constructed anew at each time step, the mesh velocity is not defined by incremental node displacement but by the difference between the new and previous patch maps at the same parametric coordinate. For patch $k$, we define the parameter-space mesh velocity
\begin{equation}
    \hat{\mathbf{w}}_k^n(\boldsymbol{\xi}) 
    := \frac{\mathbf{F}_k^n(\boldsymbol{\xi}) 
    - \mathbf{F}_k^{n-1}(\boldsymbol{\xi})}
    {\Delta t},
    \qquad \boldsymbol{\xi} \in \mathcal{P}.
    \label{eq:mesh-velocity-parametric}
\end{equation}
In the NURBS representation $\mathbf{F}_k^{n}(\boldsymbol{\xi}) 
= \sum_{i=1}^{n_k} N_i(\boldsymbol{\xi})\, 
\mathbf{x}_{k,i}^{n}$, this reduces to
\begin{equation}
    \hat{\mathbf{w}}_k^n(\boldsymbol{\xi}) 
    = \sum_{i=1}^{n_k} N_i(\boldsymbol{\xi}) \, 
    \frac{\mathbf{x}_{k,i}^n - \mathbf{x}_{k,i}^{n-1}}{\Delta t},
\end{equation}
i.e., the NURBS interpolant of the control-point displacements divided by $\Delta t$.

The mesh velocity used in the ALE momentum equation \eqref{eq:ale-momentum} is the push-forward of $\hat{\mathbf{w}}_k^n$ to the current fluid domain $\Omega^f(t^n)$:
\begin{equation}
    \mathbf{w}^n(\mathbf{x}) 
    := \hat{\mathbf{w}}_k^n\!\bigl((\mathbf{F}_k^n)^{-1}
    (\mathbf{x})\bigr),
    \qquad \mathbf{x} \in \Omega_k(t^n).
    \label{eq:mesh-velocity-physical}
\end{equation}

In practice, the inverse $(\mathbf{F}_k^n)^{-1}$ is never formed explicitly: \eqref{eq:mesh-velocity-physical} is evaluated at quadrature points whose parametric coordinates $\boldsymbol{\xi}_q$ are known by construction, so that $\mathbf{w}^n(\mathbf{F}_k^n(\boldsymbol{\xi}_q)) = \hat{\mathbf{w}}_k^n(\boldsymbol{\xi}_q)$ is computed directly from the control-point increments.

\subsection{Solution transfer and the discrete geometric conservation law}
\label{sec:solution-transfer}

Since the geometric parameterizations are constructed at every time step, each fluid solve is posed on a new parameterization $\Omega_h^{f,n}$ that differs from the previous one $\Omega_h^{f,n-1}$. The discrete fluid fields, therefore, have to be transferred across meshes between steps. This transfer problem is distinct from the geometric consistency of a conservative ALE time discretization: the former concerns the representation of the old field on the new parameterization, whereas the latter concerns the discrete evolution of the geometric Jacobian and its consistency with the mesh flux.

\paragraph{Transfer operator}
For each patch $k = 1, \ldots, n_p$ and each point $\mathbf{x} \in \Omega_k(t^n)$, the composition
\begin{equation}
    \Phi_k^{n}(\mathbf{x}) 
    := \mathbf{F}_k^{n-1}\!\bigl((\mathbf{F}_k^n)^{-1}(\mathbf{x})\bigr)
    \in \Omega_k(t^{n-1})
    \label{eq:patch-pullback}
\end{equation}
identifies the point on the previous patch that shares the same parametric coordinate as $\mathbf{x}$. Although \eqref{eq:patch-pullback} is written in physical coordinates, the inverse map $(\mathbf{F}_k^n)^{-1}$ is not formed explicitly. Instead, the correspondence is evaluated directly in parameter space: for a point $\mathbf{x}^n=\mathbf{F}_k^n(\boldsymbol{\xi})$ on the current parameterization, the corresponding point on the previous parameterization is 
\begin{equation}
        \Phi_k^n(\mathbf{x}^n)=\mathbf{F}_k^{n-1}(\boldsymbol{\xi}).
        \label{eq:parametric-transfer}
\end{equation}
Thus, no point-wise Newton solve is required to evaluate the same-parametric-coordinate correspondence between consecutive parameterizations.

Given the discrete velocity $\mathbf{u}_h^{f,n-1}$ on $\Omega_h^{f,n-1}$, the transferred field on the new mesh is defined by the quasi-interpolation
\begin{equation}
    \tilde{\mathbf{u}}_h^{f,n} 
    := \mathcal{Q}_h^n\!\bigl(\mathbf{u}_h^{f,n-1} 
    \circ \Phi^{n}\bigr),
    \label{eq:solution-transfer}
\end{equation}
where $\Phi^n$ denotes the collection $\{\Phi_k^n\}_{k=1}^{n_p}$ applied patch-wise, and $\mathcal{Q}_h^n$ is a quasi-interpolation operator onto the discrete velocity space on $\Omega_h^{f,n}$. The pressure field is transferred in the same manner with $\mathcal{Q}_h^n$ replaced by its analogue on the pressure space.

\paragraph{Quasi-interpolation operator}
For a scalar function $f$ on $\Omega_h^{f,n}$, the quasi-interpolant takes the form
\begin{equation}
    \mathcal{Q}_h^n f 
    = \sum_{i=1}^{n_{\mathrm{dof}}} \lambda_i^n(f)\, N_i^n,
    \label{eq:quasi-interpolant}
\end{equation}
where $\{N_i^n\}$ is the global basis on $\Omega_h^{f,n}$ and each $\lambda_i^n$ is a local linear functional supported in a small neighborhood of the $i$-th control point. In this work we use de~Boor--Fix-type functionals \citep{buffa2016quasi}, for which $\mathcal{Q}_h^n$ is local, linear, and constant-preserving:
\begin{equation}
    \mathcal{Q}_h^n c = c 
    \qquad \text{for every constant } c \in \mathbb{R}.
    \label{eq:constant-preserving}
\end{equation}
Because each $\lambda_i^n$ depends only on the values of $f$ in a small neighborhood, the cost of \eqref{eq:quasi-interpolant} scales linearly with the number of degrees of freedom and is negligible compared with the cost of the fluid solve. For vector-valued fields, \eqref{eq:quasi-interpolant} is applied component-wise.

\paragraph{Discrete geometric conservation law}
The DGCL \citep{Farhat2001DGCL,Guillard2000GCL} is commonly expressed through free-stream preservation: a fully discrete ALE scheme should preserve a spatially uniform velocity field under arbitrary mesh motion, 
\begin{equation}
    \mathbf{u}_h^{f,0} = \mathbf{u}_0 \in \mathbb{R}^d
    \quad \Longrightarrow \quad
    \mathbf{u}_h^{f,n} = \mathbf{u}_0 
    \qquad \forall\, n \geq 1.
    \label{eq:dgcl-condition}
\end{equation}
Preserving \eqref{eq:dgcl-condition} is widely regarded as a minimal consistency requirement for ALE schemes on moving meshes and is often linked to their nonlinear stability \citep{Farhat2001DGCL,Formaggia2004DGCL}; see also \citep{Bazilevs2008IGAFSI} for a discussion in the IGA context.

\begin{proposition}
    Let the ALE momentum equation \eqref{eq:ale-momentum} be discretized in time by backward Euler \footnote{The proof extends with essentially no change to any consistent single-step integrator: for a $\theta$-method with $\theta \in (0, 1]$, all spatial terms in \eqref{eq:ale-momentum}--\eqref{eq:ale-continuity} vanish pointwise for a constant velocity field, and the velocity update reduces to $\mathbf{u}_h^{f,n} = \tilde{\mathbf{u}}_h^{f,n}$. In \Cref{sec:numerical-examples}, $\theta = 1$ is used for the perpendicular-flap, rotating-square, and three-dimensional rotor examples, and $\theta = 1/2$ (Crank--Nicolson) for the Turek--Hron FSI2 benchmark,} with mesh velocity $\mathbf{w}^n$ defined by \eqref{eq:mesh-velocity-physical}. In the absence of external forcing and with boundary data compatible with a constant state,  the per-step parameterization and transfer procedure of \Cref{sec:mesh-generation,sec:solution-transfer} preserves a spatially uniform velocity field in the sense of ~\eqref{eq:dgcl-condition}.
    \label{prop:dgcl}
\end{proposition}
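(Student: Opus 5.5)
The plan is an induction on $n$. The hypothesis is $\mathbf{u}_h^{f,n-1}=\mathbf{u}_0$ on $\Omega_h^{f,n-1}$, with a companion statement that the discrete pressure at step $n-1$ is a constant $p_0$, which we may normalize to zero by the compatibility of the boundary data. The base case $n=0$ is the assumption in \eqref{eq:dgcl-condition}. Each step then splits into two stages: the transfer \eqref{eq:solution-transfer}, followed by the backward Euler solve on the new parameterization $\Omega_h^{f,n}$. We show that each stage maps the uniform state to itself.

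\textbf{Transfer stage.} Fix a patch $k$. The composition $\mathbf{u}_h^{f,n-1}\circ\Phi_k^n$ is evaluated through \eqref{eq:parametric-transfer}, so no inverse map is needed. It equals $\mathbf{u}_0$ at every point of $\Omega_k(t^n)$, because a constant composed with any map is the same constant. This holds regardless of the quality of $\mathbf{F}_k^{n-1}$ and $\mathbf{F}_k^n$, provided both are defined on $\mathcal{P}$, which the barrier-patch construction guarantees. Applying $\mathcal{Q}_h^n$ componentwise and using the constant-preserving property \eqref{eq:constant-preserving}, we obtain $\tilde{\mathbf{u}}_h^{f,n}=\mathbf{u}_0$, and likewise the transferred pressure is constant. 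The one point to verify is patch conformity: the patchwise definition of $\Phi^n$ must produce a globally consistent constant on shared control points. It does, since every patch yields the same constant.

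\textbf{Solve stage.} Substitute the candidate $(\mathbf{u}_h^{f,n},p_h^n)=(\mathbf{u}_0,p_0)$ into the backward Euler discretization of \eqref{eq:ale-momentum}--\eqref{eq:ale-continuity}. The discrete time derivative $(\mathbf{u}_h^{f,n}-\tilde{\mathbf{u}}_h^{f,n})/\Delta t$ vanishes. The convective term $((\mathbf{u}_0-\mathbf{w}^n)\cdot\nabla)\mathbf{u}_0$ vanishes pointwise for any mesh velocity $\mathbf{w}^n$ from \eqref{eq:mesh-velocity-physical}. The key observation is that, in this advective form, $\mathbf{w}^n$ multiplies a gradient and nowhere enters as a geometric source term. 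The viscous term $\nabla\cdot(2\mu^f\mathbb{D}(\mathbf{u}_0))$ vanishes, and so does $\nabla p_0$. Since $\nabla\cdot\mathbf{u}_0=0$, the continuity equation also holds. With $\mathbf{f}^f=\mathbf{0}$ and Dirichlet data equal to $\mathbf{u}_0$, the Neumann data equal to $-p_0\mathbf{n}$ (zero after normalization), and any moving-interface data consistent with $\mathbf{u}_0$ (this is what compatibility means), every term in the weak form vanishes for all test functions. The candidate therefore solves the discrete system. Uniqueness of the discrete solution gives $\mathbf{u}_h^{f,n}=\mathbf{u}_0$, which closes the induction.

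\textbf{Main obstacle.} The hard step is uniqueness. The backward Euler step is nonlinear, and the exact solution should not be assumed unique in general. I would handle this in one of two ways. The first is to state the result for the Picard or Newton linearization used in practice: at the linearized level, the mass term $\rho^f/\Delta t$ together with inf-sup stability of the Taylor--Hood pair on a valid parameterization makes the Oseen-type system uniquely solvable, and the uniform state is a fixed point of the iteration started from $\tilde{\mathbf{u}}_h^{f,n}$. The second is to note that for $\Delta t$ small enough the nonlinear problem has a unique solution near $\tilde{\mathbf{u}}_h^{f,n}$. Validity of the parameterization (positive Jacobian from Stage~1) is what makes the discrete spaces well defined. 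The footnote's $\theta$-method extension follows the same argument, because every spatial term evaluated at the old and new constant states vanishes.
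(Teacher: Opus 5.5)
Your proposal is correct and follows the paper's proof: induction on $n$, the identity $\mathbf{u}_0\circ\Phi^n=\mathbf{u}_0$ combined with the constant-preserving property of $\mathcal{Q}_h^n$ for the transfer stage, and the vanishing of every spatial term of the advective ALE form on a uniform state, so that the backward Euler update gives $\mathbf{u}_h^{f,n}=\tilde{\mathbf{u}}_h^{f,n}=\mathbf{u}_0$. Your extra handling of the pressure and of uniqueness makes rigorous a step the paper leaves implicit when it evaluates the spatial terms at the still-unknown $\mathbf{u}_h^{f,n}$: the pair $(\mathbf{u}_0,p_0)$ solves each Picard/Oseen system, and that system is uniquely solvable only under a mild step-size condition, since $\nabla\cdot\mathbf{w}^n\neq 0$ can spoil coercivity, so inf-sup stability alone does not suffice.
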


\begin{proof}
Suppose $\mathbf{u}_h^{f,n-1} = \mathbf{u}_0$ on $\Omega_h^{f,n-1}$ for some spatially uniform $\mathbf{u}_0 \in \mathbb{R}^d$.

Since $\mathbf{u}_0$ is a constant field, its pullback by any map equals itself; in particular,
\begin{equation*}
    \mathbf{u}_0 \circ \Phi^{n} = \mathbf{u}_0 
    \qquad \text{on } \Omega_h^{f,n}.
\end{equation*}
Applying $\mathcal{Q}_h^n$ to both sides and using the constant-preserving property \eqref{eq:constant-preserving} component-wise yields
\begin{equation*}
    \tilde{\mathbf{u}}_h^{f,n} 
    = \mathcal{Q}_h^n(\mathbf{u}_0 \circ \Phi^{n}) 
    = \mathcal{Q}_h^n \mathbf{u}_0 
    = \mathbf{u}_0.
\end{equation*}

For a spatially uniform velocity field $\mathbf{u}_0$, the strain rate and the convective transport in \eqref{eq:ale-momentum} vanish identically:
\begin{equation*}
    \mathbb{D}(\mathbf{u}_0) = 0,
    \qquad
    \bigl((\mathbf{u}_0 - \mathbf{w}^n) \cdot \nabla\bigr)\mathbf{u}_0 
    = \mathbf{0},
    \qquad
    \nabla \cdot \mathbf{u}_0 = 0.
\end{equation*}
Consequently, the viscous term vanishes and the continuity equation is trivially satisfied; the pressure field adjusts to balance boundary data and body forces, but does not enter the velocity update. The backward Euler momentum balance therefore reduces to
\begin{equation*}
    \frac{\rho^f}{\Delta t}
    \bigl(\mathbf{u}_h^{f,n} - \tilde{\mathbf{u}}_h^{f,n}\bigr) 
    = \mathbf{0},
\end{equation*}
which gives $\mathbf{u}_h^{f,n} = \tilde{\mathbf{u}}_h^{f,n} = \mathbf{u}_0$. The claim follows by induction on $n$.
\end{proof}

\paragraph{Fully discrete matrix form}
Proposition~\ref{prop:dgcl} concerns the advective form of the convective term, which is the form used in all computations reported in \Cref{sec:numerical-examples}. In that form the constant-preserving transfer is by itself sufficient, because every spatial term vanishes pointwise on a uniform state and the parameterization Jacobian never appears. The discrete geometric conservation law is, however, classically stated for the conservative form, where the Jacobian enters the discrete time derivative explicitly and does not drop out; there a constant-preserving transfer alone does not establish free-stream preservation. We therefore give the corresponding matrix-level statement, both to show that the present construction is DGCL-consistent in this stronger sense and because it is what is required should the geometry module be combined with a conservative ALE discretization. At the fully discrete level, the time-dependent contribution is assembled as
\begin{equation}
    \frac{M^{n+1}\mathbf{U}^{n+1}
          -M^n\widetilde{\mathbf{U}}^n}{\Delta t}
    -C_{\mathrm{GCL}}^{n+\frac12}\mathbf{U}^{n+1},
    \label{eq:conservative-time-block}
\end{equation}
where $\widetilde{\mathbf{U}}^n$ is the coefficient vector transferred from the preceding parameterization. The matrices below are defined for a scalar basis and applied separately to each velocity component.

Let
\begin{equation*}
    \widehat{\Omega}:=\bigsqcup_{k=1}^{n_p}\mathcal P_k,
    \qquad
    J^m\big|_{\mathcal P_k}
    =J_k^m
    :=\det\bm{\mathcal J}_k^m
    =\det\!\left(\frac{\partial\mathbf F_k^m}
                        {\partial\boldsymbol\xi}\right),
    \qquad m\in\{n,n+1\},
\end{equation*}
denote the disjoint union of the patch parameter domains and the corresponding parameterization Jacobian determinants. The functions $\phi_i$ are the reference-domain analysis basis functions, understood patch-wise on $\widehat\Omega$. We define $C_{\mathrm{GCL}}^{n+\frac12}$ as the interval-averaged Jacobian-rate correction matrix over $[t^n,t^{n+1}]$:
\begin{equation}
    \bigl(M^m\bigr)_{ij}:=\int_{\widehat\Omega}J^m\phi_i\phi_j
    \,\mathrm d\boldsymbol\xi,
    \quad m\in\{n,n+1\},
    \qquad
    \bigl(C_{\mathrm{GCL}}^{n+\frac12}\bigr)_{ij}:=
    \int_{\widehat\Omega}\frac{J^{n+1}-J^n}{\Delta t}
    \phi_i\phi_j\,\mathrm d\boldsymbol\xi.
    \label{eq:mass-dgcl-matrix}
\end{equation}
For the linear-in-time parameterization between $t^n$ and $t^{n+1}$, the metric identity $\partial_tJ=J\nabla\!\cdot\mathbf w$ identifies the second integral with the time-averaged mesh-divergence contribution. Its evaluation through the endpoint Jacobians enforces that relation algebraically.

\begin{proposition}[Fully discrete DGCL identity]
Suppose that $M^n$, $M^{n+1}$, and $C_{\mathrm{GCL}}^{n+\frac12}$ are assembled with the same analysis basis and quadrature functional on the common knot refinement formed from both consecutive parameterizations. Then
\begin{equation}
    C_{\mathrm{GCL}}^{n+\frac12}
    =\frac{M^{n+1}-M^n}{\Delta t}.
    \label{eq:discrete-gcl-identity}
\end{equation}
Consequently, if the transfer operator reproduces constants as in \eqref{eq:constant-preserving}, then \eqref{eq:conservative-time-block} vanishes for $\widetilde{\mathbf U}^n=\mathbf U^{n+1}=\mathbf U_0$. Hence, the fully discrete update satisfies the DGCL condition \eqref{eq:dgcl-condition} for any pair of consecutive admissible parameterizations.
\label{prop:fully-discrete-dgcl}
\end{proposition}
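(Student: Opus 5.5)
The plan is to verify \eqref{eq:discrete-gcl-identity} entry by entry, directly from the definitions \eqref{eq:mass-dgcl-matrix}, using linearity of the integral. Then the vanishing of \eqref{eq:conservative-time-block} on a constant state follows by substitution, and the DGCL follows by induction exactly as in Proposition~\ref{prop:dgcl}.

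\textbf{Step 1 (the identity).} For fixed $i,j$, subtract the two mass-matrix entries. Because $M^{n+1}$ and $M^n$ are integrated over the same reference domain $\widehat\Omega$, against the same product $\phi_i\phi_j$, and with the same quadrature functional $\mathcal{L}$ (the hypothesis), linearity of $\mathcal{L}$ gives
\[
\frac{(M^{n+1})_{ij}-(M^n)_{ij}}{\Delta t}
=\mathcal{L}\!\left[\frac{J^{n+1}-J^n}{\Delta t}\phi_i\phi_j\right]
=\bigl(C_{\mathrm{GCL}}^{n+\frac12}\bigr)_{ij}.
\]
This step is where I expect the only genuine subtlety. Quadrature is linear, but the identity also needs both Jacobians to be evaluated at the \emph{same} points with the \emph{same} weights. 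This is why the statement requires the common knot refinement. The breakpoints of $J^{n}$ and $J^{n+1}$ may differ, and so may those of the tangentially slipped inner boundary. An element-wise Gauss rule on either mesh alone would then not be a common functional. On the common refinement, $J^n$ and $J^{n+1}$ are both polynomial (or rational, for NURBS) on each cell. Whether or not the rule is exact, it is one fixed linear functional, and that is all the argument needs.

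\textbf{Step 2 (consistency remark).} I will add, without relying on it, the remark that for the linear-in-time interpolation $\mathbf F_k(\cdot,t)$ the metric identity $\partial_t J=J\nabla\!\cdot\mathbf w$ integrates over $[t^n,t^{n+1}]$ to $J^{n+1}-J^n$. This shows that $C_{\mathrm{GCL}}^{n+\frac12}$ is the exact time average of the mesh-divergence term, so the definition is consistent and not ad hoc.

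\textbf{Step 3 (free-stream).} Let $\mathbf U_0$ be the coefficient vector of a constant $c$ in each component. Partition of unity of the B-spline/NURBS basis makes this well defined. By \eqref{eq:constant-preserving}, transferring the uniform state gives $\widetilde{\mathbf U}^n=\mathbf U_0$. Setting $\mathbf U^{n+1}=\mathbf U_0$ in \eqref{eq:conservative-time-block} and using Step~1 yields
\[
\frac{(M^{n+1}-M^n)\mathbf U_0}{\Delta t}-C_{\mathrm{GCL}}^{n+\frac12}\mathbf U_0=\mathbf 0.
\]
The remaining spatial terms vanish on a constant state, for the reasons given in Proposition~\ref{prop:dgcl}, when there is no forcing and the boundary data are compatible. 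Hence $\mathbf U_0$ solves the step, and unique solvability of the discrete system gives $\mathbf U^{n+1}=\mathbf U_0$. Induction on $n$ then yields \eqref{eq:dgcl-condition}.

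Nothing in the argument depends on how $\mathcal G^{n+1}$ was obtained. It therefore holds for any pair of admissible (positive-Jacobian) consecutive parameterizations.
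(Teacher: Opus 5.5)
Your proposal is correct and follows essentially the same route as the paper: the identity comes entrywise from linearity of one shared quadrature functional on the common refinement, and free-stream preservation then follows from constant reproduction of the transfer together with the vanishing of the spatial terms, as in Proposition~\ref{prop:dgcl}. The differences are cosmetic. The paper first reduces the time block to $M^n(\mathbf U^{n+1}-\widetilde{\mathbf U}^n)/\Delta t$, whereas you substitute $\mathbf U_0$ directly, and you state explicitly the unique solvability of the discrete step that the paper leaves implicit.
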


\begin{proof}
Let $\mathscr{I}$ denote the numerical quadrature rule shared by the three matrices, using the same integration cells, quadrature points, and weights on the common knot refinement. Applying its linearity entrywise to \eqref{eq:mass-dgcl-matrix} gives
\begin{align}
    \bigl(C_{\mathrm{GCL}}^{n+\frac12}\bigr)_{ij}
    &=\frac{1}{\Delta t}\,
      \mathscr{I}\!\left[(J^{n+1}-J^n)\phi_i\phi_j\right] \notag\\
    &=\frac{1}{\Delta t}\left(
      \mathscr{I}\!\left[J^{n+1}\phi_i\phi_j\right]
      -\mathscr{I}\!\left[J^n\phi_i\phi_j\right]\right)
      =\frac{(M^{n+1})_{ij}-(M^n)_{ij}}{\Delta t},
\end{align}
which proves \eqref{eq:discrete-gcl-identity}. Substitution into \eqref{eq:conservative-time-block} yields
\begin{equation}
    \frac{M^n\bigl(\mathbf{U}^{n+1}
          -\widetilde{\mathbf{U}}^n\bigr)}{\Delta t}.
    \label{eq:reduced-time-block}
\end{equation}
For a constant free stream $\mathbf u_0$, let $\mathbf U_0$ denote its coefficient vector. The transfer property gives $\widetilde{\mathbf{U}}^n=\mathbf{U}_0$. Setting $\mathbf{U}^{n+1}=\mathbf{U}_0$ therefore makes \eqref{eq:reduced-time-block} vanish. Under compatible boundary data and zero forcing, the remaining spatial terms vanish as in Proposition~\ref{prop:dgcl}. The update therefore leaves $\mathbf U_0$ unchanged to machine precision.
\end{proof}

\begin{remark}
The proof uses the advective form of the convective term,
\begin{equation*}
    \bigl((\mathbf{u}^f - \mathbf{w}) \cdot \nabla\bigr)\mathbf{u}^f,
\end{equation*}
which vanishes identically for any spatially uniform 
$\mathbf{u}^f$. In the conservative form
\begin{equation*}
    \nabla \cdot \bigl((\mathbf{u}^f - \mathbf{w}) \otimes 
    \mathbf{u}^f\bigr),
\end{equation*}
a constant $\mathbf{u}^f$ leaves a residual proportional to $\nabla \cdot \mathbf{w}$. This term need not vanish separately; it is balanced by the temporal change of the parameterization Jacobian when the discrete identity \eqref{eq:discrete-gcl-identity} is satisfied.
\end{remark}

\begin{remark}
Proposition~\ref{prop:fully-discrete-dgcl} is algebraic and does not require exact integration of the time-dependent Jacobian fields. It requires the old mass matrix, new mass matrix, and Jacobian-rate correction matrix to use the same quadrature functional; the union integration basis enforces this common partition in the implementation.

The identity \eqref{eq:discrete-gcl-identity} should not be read as a tautology, since it holds for the particular discretization \eqref{eq:mass-dgcl-matrix} of the correction term and not for the discretization one would write down first. Assembling $C_{\mathrm{GCL}}^{n+\frac12}$ directly from the metric identity $\partial_t J = J\,\nabla\!\cdot\mathbf{w}$, that is, as $\int_{\widehat\Omega} J^{n+\frac12}\bigl(\nabla\!\cdot\mathbf{w}^{n+\frac12}\bigr)\phi_i\phi_j\,\mathrm d\boldsymbol\xi$ with the Jacobian and the mesh divergence evaluated at the midpoint, reproduces $(M^{n+1}-M^n)/\Delta t$ only up to a time-interpolation and quadrature error. The free stream would then be preserved to that accuracy rather than to machine precision, and the error would enter at every step. What makes the present construction exact is that the correction is assembled from the two endpoint Jacobians themselves, on the common knot refinement of the two parameterizations and with the same quadrature functional as the two mass matrices, so that the cancellation takes place at the level of the quadrature sums rather than of the integrals they approximate. The requirement is therefore a genuine constraint on the implementation, and it is the reason the union integration basis is used.
\end{remark}

\begin{remark}
The quasi-interpolation operator reproduces constants exactly~\eqref{eq:constant-preserving}; hence a spatially uniform state is transferred without error. This establishes free-stream preservation for the adopted advective update, but does not make the transfer conservative for integral quantities such as mass or momentum.
\end{remark}

\begin{figure}[H]
\centering
\resizebox{\linewidth}{!}{%
\begin{tikzpicture}[>=Stealth,
    panel img/.style={anchor=south west, inner sep=0pt, outer sep=0pt},
    flow arrow/.style={draw=black!55, line width=0.8pt, ->, >=Stealth},
    panel title/.style={font=\small\bfseries, black!85},
    panel caption/.style={font=\footnotesize, black!80},
    step note/.style={font=\scriptsize, black!60, text width=2.6cm, align=center}
  ]
 
\def\ps{2.2}
\def\hsep{1.4}
\def\xA{0}     \def\xB{3.6}   \def\xC{7.2}
\def\xD{10.8}  \def\xE{14.4}

\begin{scope}[shift={(\xA,0)}]
  \node[panel img] at (0,0)
    {\includegraphics[width=\ps cm, height=\ps cm]{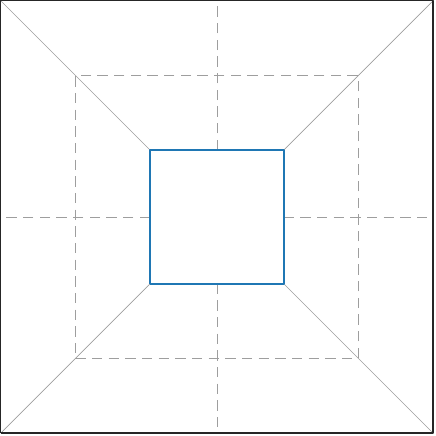}};
  \node[panel title, above] at (1.1,\ps+0.15) {Input};
  \node[panel caption, below] at (1.1,-0.3)
    {$(\mathbf{u}_h^{f,n\!-\!1},p_h^{f,n\!-\!1})$ on $\Omega_h^{f,n\!-\!1}$};
\end{scope}

\draw[flow arrow] (\xA+\ps+0.15,1.1)--
  node[above,font=\scriptsize, black!70]{Step 1} (\xB-0.15,1.1);
 
\begin{scope}[shift={(\xB,0)}]
  \node[panel img] at (0,0)
    {\includegraphics[width=\ps cm, height=\ps cm]{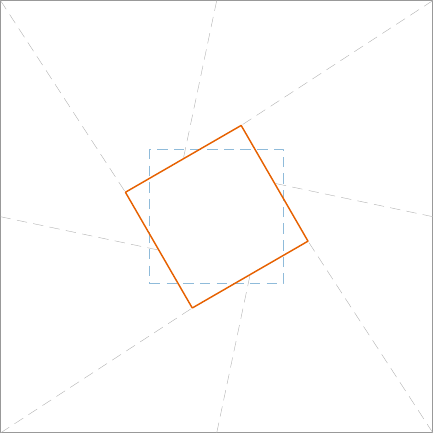}};
  \node[panel title, above] at (1.1,\ps+0.15) {Steps 1--2};
  \node[panel caption, below] at (1.1,-0.3) {Boundary curves};
\end{scope}
 
\draw[flow arrow] (\xB+\ps+0.15,1.1)--
  node[above,font=\scriptsize, black!70]{Step 3} (\xC-0.15,1.1);
 
\begin{scope}[shift={(\xC,0)}]
  \node[panel img] at (0,0)
    {\includegraphics[width=\ps cm, height=\ps cm]{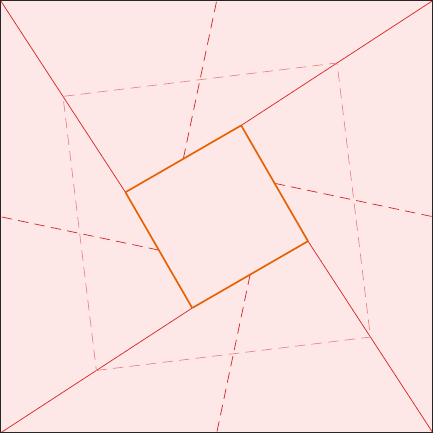}};
  \node[panel title, above] at (1.1,\ps+0.15) {Step 3};
  \node[panel caption, below] at (1.1,-0.3) {Ruled surface};
\end{scope}
 
\draw[flow arrow] (\xC+\ps+0.15,1.1)--
  node[above,font=\scriptsize, black!70]{Step 4} (\xD-0.15,1.1);
 
\begin{scope}[shift={(\xD,0)}]
  \node[panel img] at (0,0)
    {\includegraphics[width=\ps cm, height=\ps cm]{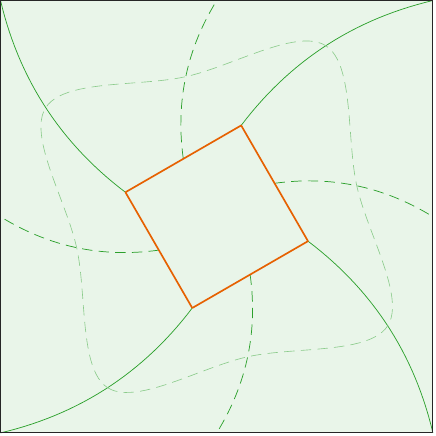}};
  \node[panel title, above] at (1.1,\ps+0.15) {Step 4};
  \node[panel caption, below] at (1.1,-0.3) {$\Omega_h^{f,n}$ (optimized)};
\end{scope}
 
\draw[flow arrow] (\xD+\ps+0.15,1.1)--
  node[above,font=\scriptsize, black!70]{Steps 5--7} (\xE-0.15,1.1);
 
\begin{scope}[shift={(\xE,0)}]
  \node[panel img] at (0,0)
    {\includegraphics[width=\ps cm, height=\ps cm]{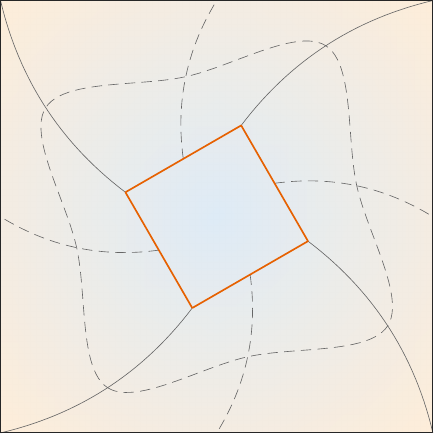}};
  \node[panel title, above] at (1.1,\ps+0.15) {Steps 5--7};
  \node[panel caption, below] at (1.1,-0.3)
    {$(\mathbf{u}_h^{f,n},p_h^{f,n})$ on $\Omega_h^{f,n}$};
\end{scope}
 
\node[step note] at (\xB+1.1,-1.45)
  {from structural solver or kinematics;\\
   tangential slip for closed domains};
\node[step note] at (\xC+1.1,-1.45) {Eq.~\eqref{eq:ruled-surface}};
\node[step note] at (\xD+1.1,-1.45)
  {Eqs.~\eqref{eq:barrier-foldover}--\eqref{eq:barrier-quality}};
\node[step note] at (\xE+1.1,-1.45)
  {$\mathcal{Q}_h^n$ \eqref{eq:solution-transfer},
   $\mathbf{w}^n$ \eqref{eq:mesh-velocity-physical},
   solve \eqref{eq:ale-momentum}--\eqref{eq:ale-continuity}};
 
\end{tikzpicture}%
}
 
\caption{%
  Schematic of one time step of Algorithm~\ref{alg:timestep}, illustrated on the rotating-square geometry. \textbf{Input:} solution at $t^{n-1}$ on the previous mesh $\Omega_h^{f,n-1}$.
  \textbf{Steps 1--2:} new interface curves obtained from the structural solver or prescribed kinematics, after tangential reparameterization (closed domains).
  \textbf{Step 3:} initial ruled-surface parameterization before quality optimization.
  \textbf{Step 4:} optimized parameterization $\Omega_h^{f,n}$ after the barrier-patch optimization.
  \textbf{Steps 5--7:} the solution on the optimized mesh, obtained by solution transfer, mesh-velocity evaluation, and the fluid solve.}
\label{fig:ale-schematic}
\end{figure}

The complete time-stepping procedure of the proposed framework is summarized in Algorithm~\ref{alg:timestep}; an illustration on the rotating-square geometry is given in Figure~\ref{fig:ale-schematic}.

\begin{algorithm}[H]
\caption{One time step of the parameterization-driven ALE procedure.}
\label{alg:timestep}
\KwIn{$(\mathbf{u}_h^{f,n-1}, p_h^{f,n-1})$ on $\Omega_h^{f,n-1}$}
\KwOut{$(\mathbf{u}_h^{f,n}, p_h^{f,n})$ on $\Omega_h^{f,n}$}

Obtain boundary curves $\{\boldsymbol{\gamma}_k^{\mathrm{in}}(\cdot,t^n)\}$ from prescribed kinematics or from the structural solver\;

\If{closed-domain configuration}{
    Apply the tangential reparameterization \eqref{eq:shifted-net} to shift the inner boundary\label{algline:tangslip}\;
}

Construct the initial patch mappings $\{\mathbf{F}_k^n\}$ from the boundary curves: ruled surface \eqref{eq:ruled-surface} for closed domains, transfinite interpolation from $\{\boldsymbol{\gamma}_k^{\mathrm{in}}, \boldsymbol{\gamma}_k^{\mathrm{out}}\}$ for open domains (\Cref{sec:open-domain})\label{algline:ruled}\;

Refine the interior control points by minimizing the barrier-patch functionals \eqref{eq:barrier-foldover}--\eqref{eq:barrier-quality}\label{algline:barrier}\;

Compute $(\tilde{\mathbf{u}}_h^{f,n}, \tilde{p}_h^{f,n})$ using the transfer operator $\mathcal{Q}_h^n$ in \eqref{eq:solution-transfer}\label{algline:transfer}\;

Evaluate the mesh velocity $\mathbf{w}^n$ from \eqref{eq:mesh-velocity-physical}\label{algline:meshvel}\;

Solve \eqref{eq:ale-momentum}--\eqref{eq:ale-continuity} on $\Omega_h^{f,n}$ to obtain $(\mathbf{u}_h^{f,n}, p_h^{f,n})$\label{algline:fluid}\;
\end{algorithm}

\subsection{Bijectivity Criterion}
\label{sec:bijectivity-criterion}

We now specify the discrete condition used to enforce the bijectivity requirement \eqref{eq:bijectivity-continuous} on the patch maps $\{\mathbf{F}_k^n\}$ produced at each time step by the barrier-patch procedure of \Cref{sec:barrier-patch-method}.

The patch-wise Jacobian matrix
\begin{equation}
    \bm{\mathcal{J}}_k^n(\boldsymbol{\xi}) 
    := \frac{\partial \mathbf{F}_k^n}{\partial \boldsymbol{\xi}}(\boldsymbol{\xi})
    \label{eq:jacobian-matrix}
\end{equation}
is the discrete, time-indexed realization of the ALE Jacobian $\bm{\mathcal{J}}^{\mathrm{ALE}}$ introduced in \eqref{eq:jacobian-ale}. A standard sufficient condition for local invertibility of $\mathbf{F}_k^n$ is that
\begin{equation}
    \det \bm{\mathcal{J}}_k^n(\boldsymbol{\xi}) > 0 
    \quad \text{for all } \boldsymbol{\xi} \in \mathcal{P}, 
    \; k = 1, \ldots, n_p.
    \label{eq:bijectivity-discrete}
\end{equation}
A vanishing determinant signals a local collapse of the parameterization; a sign change corresponds to a fold-over and hence to loss of injectivity.

In practice, \eqref{eq:bijectivity-discrete} is monitored through the minimum scaled Jacobian
\begin{equation}
    \mathcal{J}_{\min}(t^n) 
    := \min_{1 \leq k \leq n_p} \; 
    \min_{\boldsymbol{\xi} \in \mathcal{S}}
    \frac{\det \bm{\mathcal{J}}_k^n(\boldsymbol{\xi})}
    {\prod_{a=1}^{d} \bigl\| \partial_{\xi_a} \mathbf{F}_k^n(\boldsymbol{\xi}) \bigr\|},
    \label{eq:jmin-monitor}
\end{equation}
where $\mathcal{S} \subset \mathcal{P}$ is the set of Gauss quadrature points used in the element-wise integration. By Hadamard's inequality, $\mathcal{J}_{\min}(t^n) \in [-1, 1]$, with $\mathcal{J}_{\min}(t^n) = 1$ attained if and only if the columns of $\bm{\mathcal{J}}_k^n$ are pairwise orthogonal at every sampling point (i.e., the parameterization is locally orthogonal in the $\xi_a$-directions). A parameterization is considered admissible whenever $\mathcal{J}_{\min}(t^n) > 0$; values close to zero indicate proximity to degeneracy. The indicator $\mathcal{J}_{\min}$ is tracked throughout the numerical experiments of \Cref{sec:numerical-examples}.

\section{Numerical experiments}
\label{sec:numerical-examples}

This section assesses the parameterization-driven ALE framework through six studies spanning kinematic parameterization tests, coupled FSI benchmarks, and three-dimensional large-motion configurations.


The first case (\Cref{sec:mesh-quality}) is purely kinematic: the fluid
parameterization is driven by prescribed rigid-body rotation of an embedded
square, with no flow solver involved. This isolates the parameterization
itself and quantifies the range of cumulative rotation angles for which the
barrier-patch construction remains valid. This test has two purposes. First,
it provides a stress test for the parameterization method without tangential slip. Second, the comparison between the higher-order spline result and the
first-order case $p=1$ shows that the higher degree and inter-element continuity brought by the spline representation help preserve parameterization validity over a larger admissible rotation range
under large accumulated deformation.

The perpendicular-flap benchmark from the preCICE tutorial suite (\Cref{sec:perpendicular-flap}) verifies the response in a moderate-deformation two-way coupled setting, while the Turek--Hron FSI2 benchmark (\Cref{sec:turek-hron-fsi2}) tests sustained large-amplitude periodic deformation. The sphere-in-channel study (\Cref{sec:falling-sphere}) extends the assessment to three-dimensional large translation: its prescribed-motion part compares mesh quality with Di Cristofaro et al.~\citep{DiCristofaro2025ALEFlying}, whereas its free-fall part validates the coupled velocity response against published data \citep{Casquero2015NURBSImmersed,Black2026ImmersedFSI}. The flow past a rotating square (\Cref{sec:flow-past-square}) then returns to the geometry of the first case under continuous large-angle rotation and uses both isogeometric and finite-element flow solvers to demonstrate solver portability. Finally, the three-dimensional rotor tests the volumetric construction under sustained rotation. Thus, the perpendicular-flap, Turek--Hron, and free-fall cases provide dynamical comparisons with external data, while the prescribed sphere, rotating square, and rotor isolate robustness under large translation or rotation.

The fluid is discretized with Taylor--Hood-type isogeometric spaces (\Cref{sec:iga}) on the multi-patch geometries constructed in \Cref{sec:mesh-generation}. All fluid solves are performed in G+Smo \citep{Juettler2014Gismo, Gismo2025Zenodo}; the rotating-square case additionally uses \textsc{deal.II} \citep{dealII95} as an external finite-element solver. The three two-way coupled benchmarks (perpendicular flap, Turek--Hron FSI2, and free fall) are coupled in a partitioned manner through the preCICE library \citep{Chourdakis2021}.

\subsection{Admissible rotation range of the mesh parameterization}
\label{sec:mesh-quality}

Our first example isolates the parameterization itself, independent of any flow or FSI coupling, and quantifies the range of cumulative rotation angles for which it remains valid. We consider a square of side length $L$ centered in a $3L \times 3L$ outer box; the annular region is partitioned into four body-fitted patches, matching the inner block of the fluid domain used in \Cref{sec:flow-past-square}. The square is rotated incrementally about its center, and each mesh-update method is applied step by step until either $\mathcal{J}_{\min}$ drops to zero (parameterization failure) or a target cumulative rotation is reached. The indicator of interest is $\mathcal{J}_{\min}(\theta)$ from \eqref{eq:jmin-monitor}, tracked as a function of the rotation angle $\theta$.

\begin{figure}[H]
    \centering

    \resizebox{\textwidth}{!}{%
    \begin{tikzpicture}[
        image/.style={
            inner sep=0pt,
            outer sep=0pt
        },
        lab/.style={
            font=\footnotesize,
            align=center,
            text width=3.0cm,
            anchor=north,
            fill=black!2,
            draw=black!16,
            rounded corners=2pt,
            inner xsep=3pt,
            inner ysep=3pt
        },
        biglab/.style={
            font=\footnotesize,
            align=center,
            text width=4.2cm,
            anchor=north,
            fill=ForestGreen!5,
            draw=ForestGreen!45!black,
            rounded corners=2pt,
            inner xsep=4pt,
            inner ysep=4pt
        },
        group title/.style={
            font=\small\bfseries,
            align=center,
            text=black!80
        },
        chip/.style={
            font=\scriptsize\bfseries,
            inner xsep=3pt,
            inner ysep=1.5pt,
            rounded corners=1.5pt,
            fill=black!75,
            text=white,
            anchor=north east
        },
        chip green/.style={
            chip,
            fill=ForestGreen!65!black
        },
        chip red/.style={
            chip,
            fill=red!75!black,
            text=white
        }
    ]

    \def\smallW{3.0cm}
    \def\bpW{3.4cm}

    \def\xA{0}
    \def\xB{3.55}
    \def\xC{7.10}
    \def\xSep{9.25}
    \def\xD{12.55}

    \def\yTop{3.25}
    \def\yBot{-1.85}

    \def\yBPa{3.85}
    \def\yBPb{-0.05}
    \def\yBPc{-3.95}

    \node[group title] at (3.55,5.30)
        {Classical mesh-update strategies $p=2$};

    \node[group title] at (\xD,6.45)
        {Parameterization-driven strategy $p=2$};

    \draw[black!18, line width=0.5pt]
        (-1.55,5.05) -- (8.65,5.05);

    \draw[ForestGreen!35!black, line width=0.5pt]
        (10.60,6.20) -- (14.50,6.20);

    \node[image] (HE) at (\xA,\yTop)
        {\includegraphics[width=\smallW]{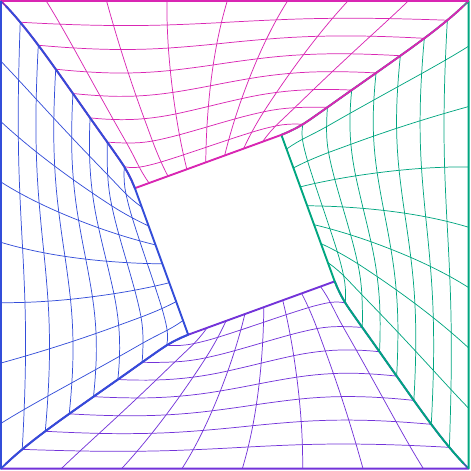}};
    \node[chip] at ([xshift=-2pt,yshift=-2pt]HE.north east)
        {$20.7^\circ$};
    \node[lab, yshift=-1.4mm] at (HE.south)
        {{\bfseries Harmonic Extension}~\citep{Johnson1994HE}\\[-0.5mm]
        {\color{black!75}$\theta_{\max}=20.7^\circ$}};

    \node[image] (IHE) at (\xB,\yTop)
        {\includegraphics[width=\smallW]{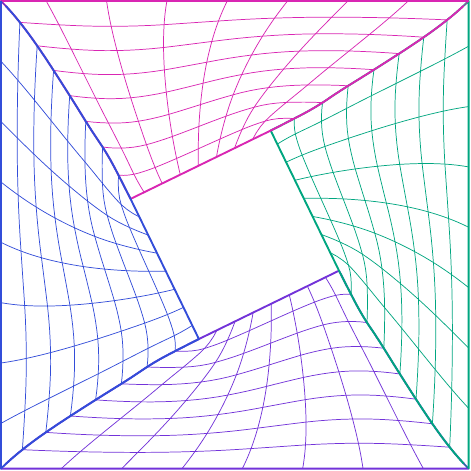}};
    \node[chip] at ([xshift=-2pt,yshift=-2pt]IHE.north east)
        {$26.8^\circ$};
    \node[lab, yshift=-1.4mm] at (IHE.south)
        {{\bfseries Incremental Harmonic}\\
        {\bfseries Extension}~\citep{Wick2011IHE}\\[-0.5mm]
        {\color{black!75}$\theta_{\max}=26.8^\circ$}};

    \node[image] (LE) at (\xC,\yTop)
        {\includegraphics[width=\smallW]{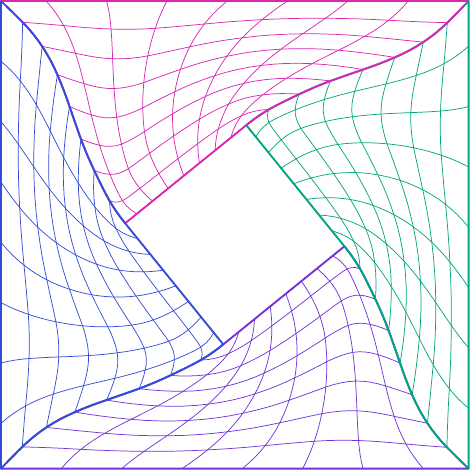}};
    \node[chip] at ([xshift=-2pt,yshift=-2pt]LE.north east)
        {$39.1^\circ$};
    \node[lab, yshift=-1.4mm] at (LE.south)
        {{\bfseries Linear Elasticity}~\citep{Tezduyar1992LE}\\[-0.5mm]
        {\color{black!75}$\theta_{\max}=39.1^\circ$}};

    \node[image] (ILE) at (\xA,\yBot)
        {\includegraphics[width=\smallW]{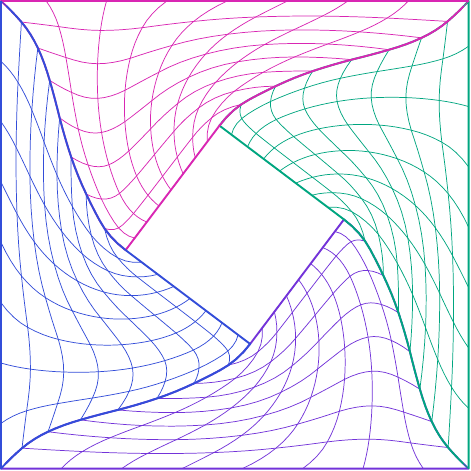}};
    \node[chip] at ([xshift=-2pt,yshift=-2pt]ILE.north east)
        {$53.2^\circ$};
    \node[lab, yshift=-1.4mm] at (ILE.south)
        {{\bfseries Incremental Linear}\\
        {\bfseries Elasticity}~\citep{stein2002advanced}\\[-0.5mm]
        {\color{black!75}$\theta_{\max}=53.2^\circ$}};

    \node[image] (TINE) at (\xB,\yBot)
        {\includegraphics[width=\smallW]{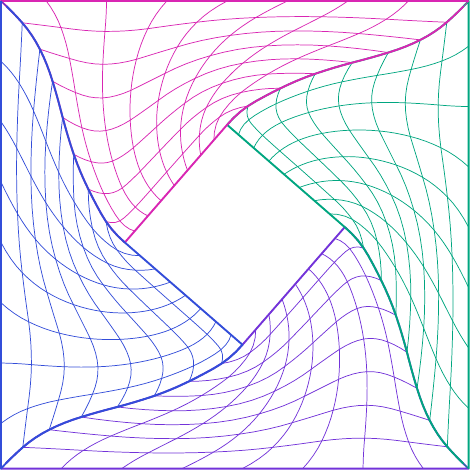}};
    \node[chip] at ([xshift=-2pt,yshift=-2pt]TINE.north east)
        {$49.8^\circ$};
    \node[lab, yshift=-1.4mm] at (TINE.south)
        {{\bfseries Tangential Incremental}\\
        {\bfseries Nonlinear Elasticity}~\citep{Shamanskiy2021MeshMoving}\\[-0.5mm]
        {\color{black!75}$\theta_{\max}=49.8^\circ$}};

    \node[image] (BHE) at (\xC,\yBot)
        {\includegraphics[width=\smallW]{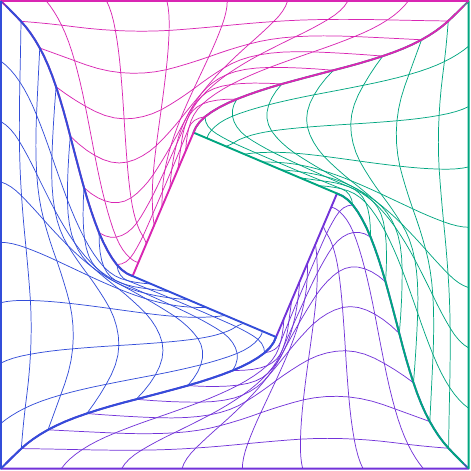}};
    \node[chip] at ([xshift=-2pt,yshift=-2pt]BHE.north east)
        {$67.1^\circ$};
    \node[lab, yshift=-1.4mm] at (BHE.south)
        {{\bfseries Bi-harmonic Extension}~\citep{Helenbrook2003BHE}\\[-0.5mm]
        {\color{black!75}$\theta_{\max}=67.1^\circ$}};

    \draw[gray!50, line width=0.5pt]
        (\xSep,-6.60) -- (\xSep,6.70);

    \node[image] (BPa) at (\xD,\yBPa)
        {\includegraphics[width=\bpW]{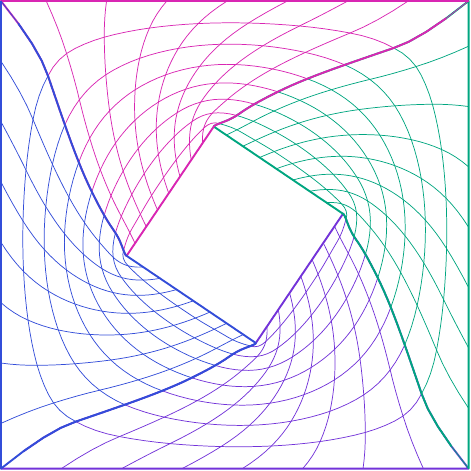}};
    \node[chip green] at ([xshift=-2pt,yshift=-2pt]BPa.north east)
        {$45^\circ$};

    \node[image] (BPb) at (\xD,\yBPb)
        {\includegraphics[width=\bpW]{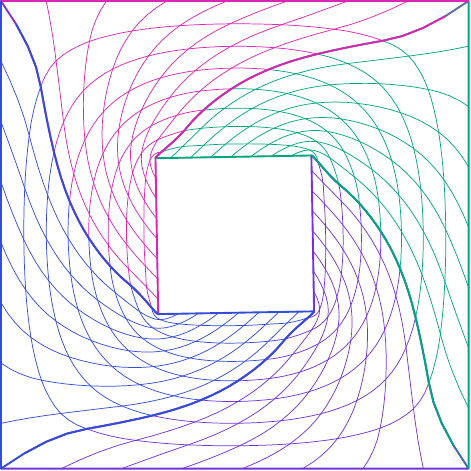}};
    \node[chip green] at ([xshift=-2pt,yshift=-2pt]BPb.north east)
        {$90^\circ$};

    \node[image] (BPc) at (\xD,\yBPc)
        {\includegraphics[width=\bpW]{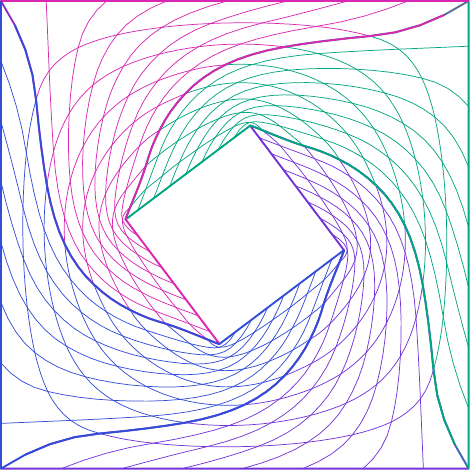}};
    \node[chip red] at ([xshift=-2pt,yshift=-2pt]BPc.north east)
        {{\color{red!8!white}$127.5^\circ$}};

    \draw[ForestGreen!55!black, line width=0.9pt, ->, >=Stealth]
        ($(BPa.south west) + (-0.55, 0.1)$)
        --
        ($(BPc.north west) + (-0.55, -0.1)$);

    \node[font=\scriptsize\itshape, text=ForestGreen!50!black,
          rotate=90, anchor=south]
        at ($(BPb.west) + (-0.65, 0)$)
        {increasing rotation};

    \node[biglab, yshift=-2.2mm] at (BPc.south)
        {{\bfseries Barrier Patch Method}\\[-0.5mm]
        {\color{red!75!black}$\theta_{\max}=127.5^\circ$}};

    \end{tikzpicture}%
    }

    \caption{Maximum admissible rotation angles for different mesh-motion strategies. Each panel in the left block shows the parameterization at the maximum angle the corresponding classical method can handle before losing validity, with the angle indicated in the upper-right chip. The right column shows the parameterization-driven Barrier Patch Method at three rotation angles ($45^\circ$, $90^\circ$, and its admissible limit {\color{red!75!black}$127.5^\circ$}), demonstrating that the parameterization remains valid throughout the rotation and reaches a substantially larger maximum angle than any classical scheme.}
    \label{fig:max-angle-comparison}
\end{figure}

\paragraph{Maximum sustainable rotation angle}
We compare the proposed barrier-patch (BP) parameterization with six classical mesh-update strategies: harmonic extension (HE), incremental harmonic extension (IHE), linear elasticity (LE), incremental linear elasticity (ILE), tangential incremental nonlinear elasticity (TINE), and bi-harmonic extension (BHE); detailed formulations are given in \citep{Shamanskiy2021MeshMoving}. The maximum rotation angle sustained by each method is reported in \Cref{fig:max-angle-comparison}. All six classical methods lose validity well before $90^\circ$; the best performer among them, bi-harmonic extension, fails at $67.1^\circ$. The Barrier Patch Method remains valid up to $127.5^\circ$.

The full benefit of the proposed approach relies on the spline parameterization, and in particular on its degree and inter-element continuity, which together yield smoother Jacobian fields; these are essential when the domain undergoes large accumulated deformation. To quantify this effect, we repeated the stress test with a first-order ($p=1$) parameterization, which lacks the inter-element smoothness the method is designed to exploit, while keeping the regeneration algorithm, the patch layout, and the refinement level unchanged. As shown in \Cref{fig:p1-method-stress-comparison}, the maximum admissible rotation angle drops to $31.35^\circ$, far below the $127.5^\circ$ reached by the higher-order barrier-patch parameterization in \Cref{fig:max-angle-comparison}. Higher-order spline parameterizations thus provide a substantially larger resistance to large mesh deformations; all subsequent examples therefore employ parameterizations of degree $p \geq 2$.

\begin{figure}[H]
    \centering

    \resizebox{\textwidth}{!}{%
    \begin{tikzpicture}[
        image/.style={
            inner sep=0pt,
            outer sep=0pt
        },
        lab/.style={
            font=\footnotesize,
            align=center,
            text width=3.0cm,
            anchor=north,
            fill=black!2,
            draw=black!16,
            rounded corners=2pt,
            inner xsep=3pt,
            inner ysep=3pt
        },
        biglab/.style={
            font=\footnotesize,
            align=center,
            text width=4.2cm,
            anchor=north,
            fill=ForestGreen!5,
            draw=ForestGreen!45!black,
            rounded corners=2pt,
            inner xsep=4pt,
            inner ysep=4pt
        },
        group title/.style={
            font=\small\bfseries,
            align=center,
            text=black!80
        },
        chip/.style={
            font=\scriptsize\bfseries,
            inner xsep=3pt,
            inner ysep=1.5pt,
            rounded corners=1.5pt,
            fill=black!75,
            text=white,
            anchor=north east
        },
        chip green/.style={
            chip,
            fill=ForestGreen!65!black
        }
    ]

    \def\smallW{3.0cm}
    \def\bpW{3.4cm}

    \def\xA{0}
    \def\xB{3.55}
    \def\xC{7.10}
    \def\xSep{9.25}
    \def\xD{12.55}

    \def\yTop{3.25}
    \def\yBot{-1.85}

    \def\yBPa{3.85}
    \def\yBPb{-0.05}
    \def\yBPc{-3.95}

    \node[group title] at (3.55,5.30)
        {Classical mesh-update strategies $p=1$};

    \node[group title] at (\xD,6.45)
        {Parameterization-driven strategy $p=1$};

    \draw[black!18, line width=0.5pt]
        (-1.55,5.05) -- (8.65,5.05);

    \draw[ForestGreen!35!black, line width=0.5pt]
        (10.60,6.20) -- (14.50,6.20);

    \node[image] (HE) at (\xA,\yTop)
        {\includegraphics[width=\smallW,trim=23 23 23 23,clip]{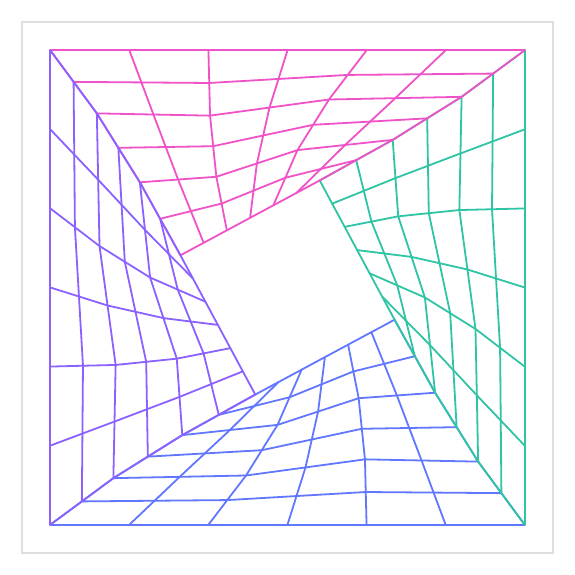}};
    \node[chip] at ([xshift=-2pt,yshift=-2pt]HE.north east)
        {$28.25^\circ$};
    \node[lab, yshift=-1.4mm] at (HE.south)
        {{\bfseries Harmonic Extension}\\[-0.5mm]
        {\color{black!75}$\theta_{\max}=28.25^\circ$}};

    \node[image] (IHE) at (\xB,\yTop)
        {\includegraphics[width=\smallW,trim=23 23 23 23,clip]{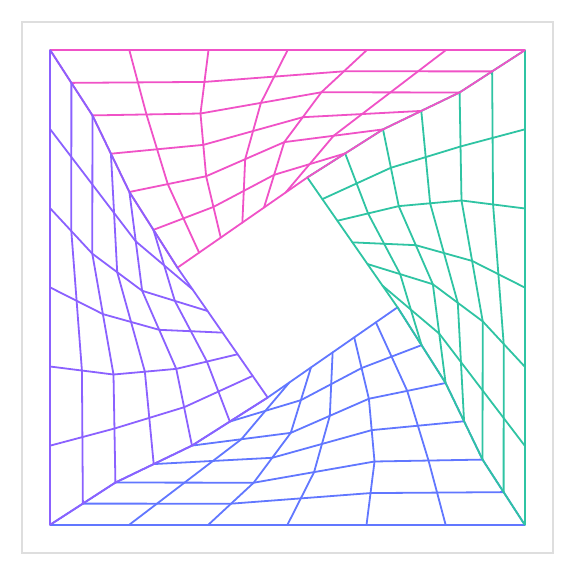}};
    \node[chip] at ([xshift=-2pt,yshift=-2pt]IHE.north east)
        {$34.80^\circ$};
    \node[lab, yshift=-1.4mm] at (IHE.south)
        {{\bfseries Incremental Harmonic}\\
        {\bfseries Extension}\\[-0.5mm]
        {\color{black!75}$\theta_{\max}=34.80^\circ$}};

    \node[image] (LE) at (\xC,\yTop)
        {\includegraphics[width=\smallW,trim=23 23 23 23,clip]{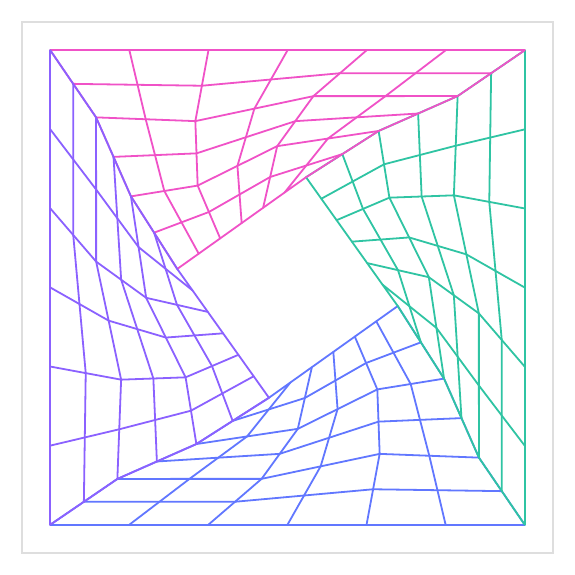}};
    \node[chip] at ([xshift=-2pt,yshift=-2pt]LE.north east)
        {$35.45^\circ$};
    \node[lab, yshift=-1.4mm] at (LE.south)
        {{\bfseries Linear Elasticity}\\[-0.5mm]
        {\color{black!75}$\theta_{\max}=35.45^\circ$}};

    \node[image] (ILE) at (\xA,\yBot)
        {\includegraphics[width=\smallW,trim=23 23 23 23,clip]{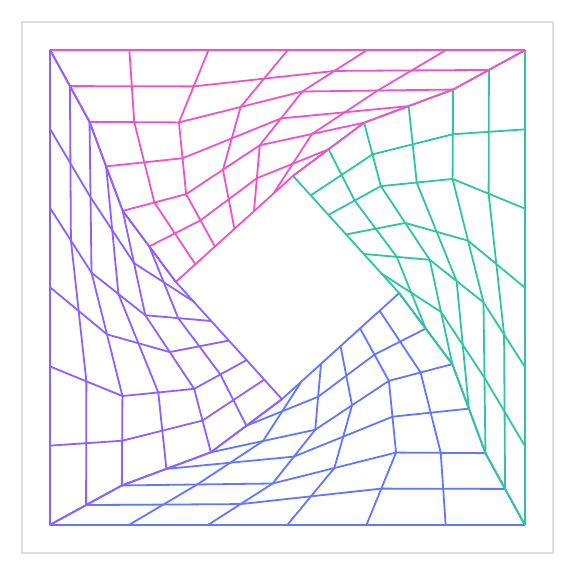}};
    \node[chip] at ([xshift=-2pt,yshift=-2pt]ILE.north east)
        {$42.15^\circ$};
    \node[lab, yshift=-1.4mm] at (ILE.south)
        {{\bfseries Incremental Linear}\\
        {\bfseries Elasticity}\\[-0.5mm]
        {\color{black!75}$\theta_{\max}=42.15^\circ$}};

    \node[image] (TINE) at (\xB,\yBot)
        {\includegraphics[width=\smallW,trim=23 23 23 23,clip]{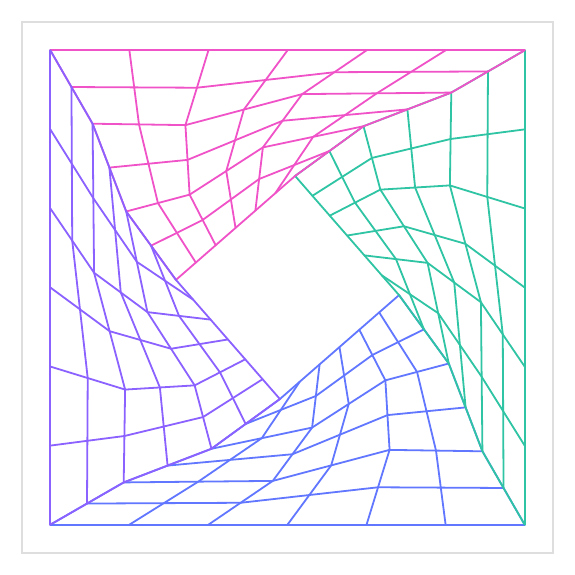}};
    \node[chip] at ([xshift=-2pt,yshift=-2pt]TINE.north east)
        {$41.10^\circ$};
    \node[lab, yshift=-1.4mm] at (TINE.south)
        {{\bfseries Tangential Incremental}\\
        {\bfseries Nonlinear Elasticity}\\[-0.5mm]
        {\color{black!75}$\theta_{\max}=41.10^\circ$}};

    \node[image] (BHE) at (\xC,\yBot)
        {\includegraphics[width=\smallW,trim=23 23 23 23,clip]{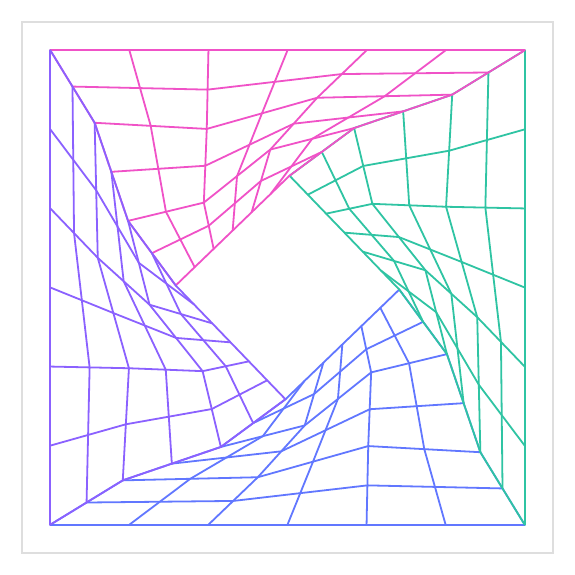}};
    \node[chip] at ([xshift=-2pt,yshift=-2pt]BHE.north east)
        {$43.95^\circ$};
    \node[lab, yshift=-1.4mm] at (BHE.south)
        {{\bfseries Bi-harmonic Extension}\\[-0.5mm]
        {\color{black!75}$\theta_{\max}=43.95^\circ$}};

    \draw[gray!50, line width=0.5pt]
        (\xSep,-6.60) -- (\xSep,6.70);

    \node[image] (BPa) at (\xD,\yBPa)
        {\includegraphics[width=\bpW,trim=23 23 23 23,clip]{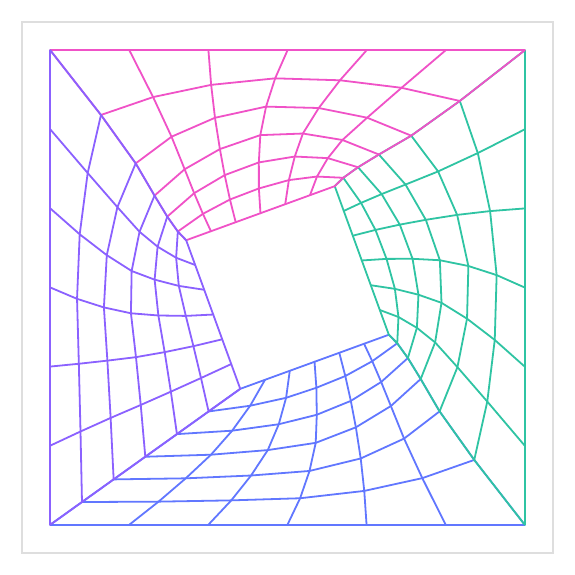}};
    \node[chip green] at ([xshift=-2pt,yshift=-2pt]BPa.north east)
        {$20^\circ$};

    \node[image] (BPb) at (\xD,\yBPb)
        {\includegraphics[width=\bpW,trim=23 23 23 23,clip]{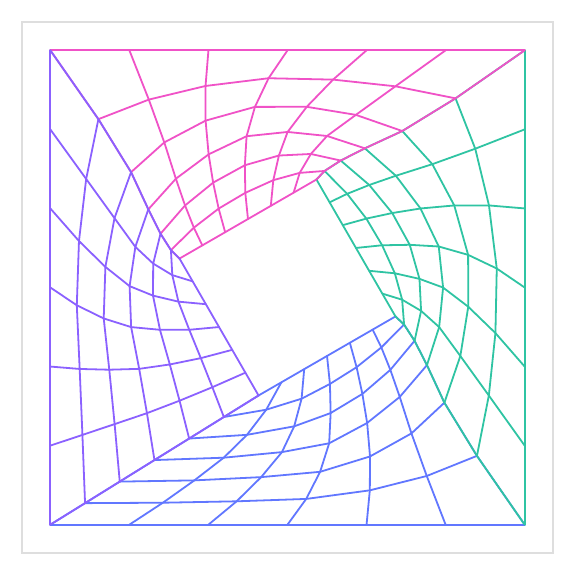}};
    \node[chip green] at ([xshift=-2pt,yshift=-2pt]BPb.north east)
        {$30^\circ$};

    \node[image] (BPc) at (\xD,\yBPc)
        {\includegraphics[width=\bpW,trim=23 23 23 23,clip]{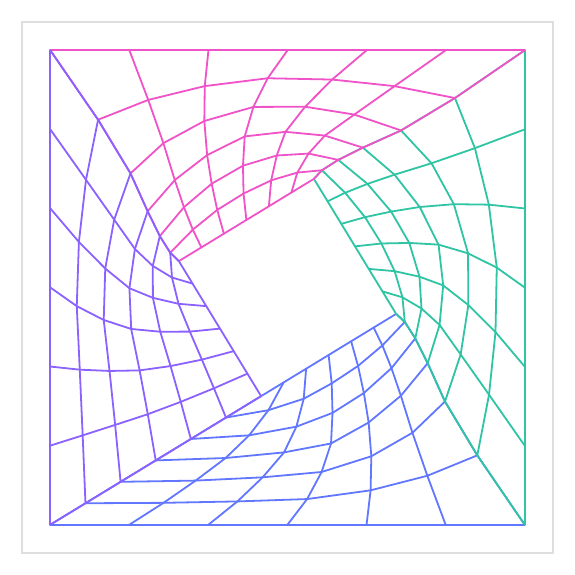}};
    \node[chip green] at ([xshift=-2pt,yshift=-2pt]BPc.north east)
        {$31.35^\circ$};

    \draw[ForestGreen!55!black, line width=0.9pt, ->, >=Stealth]
        ($(BPa.south west) + (-0.55, 0.1)$)
        --
        ($(BPc.north west) + (-0.55, -0.1)$);

    \node[font=\scriptsize\itshape, text=ForestGreen!50!black,
          rotate=90, anchor=south]
        at ($(BPb.west) + (-0.65, 0)$)
        {increasing rotation};

    \node[biglab, yshift=-2.2mm] at (BPc.south)
        {{\bfseries Barrier Patch Method}\\[-0.5mm]
        {\color{ForestGreen!55!black}$\theta_{\max}=31.35^\circ$}};

    \end{tikzpicture}%
    }

    \caption{Maximum admissible rotation angles for first-order ($p=1$) parameterizations with 2 uniform refinements, using a $0.05^\circ$ angle resolution and without inner-boundary tangential slip. Each panel in the left block shows the parameterization at the maximum sampled angle the corresponding classical method can handle before losing validity. The right column shows the Barrier Patch Method at two intermediate angles ($20^\circ$ and $30^\circ$) and its admissible limit 31.35$^\circ$.}
    \label{fig:p1-method-stress-comparison}
\end{figure}

\subsection{Perpendicular flap}
\label{sec:perpendicular-flap}

As the first coupled configuration, we consider the perpendicular-flap benchmark from the preCICE tutorial suite \citep{Chourdakis2021}. The benchmark models two-dimensional fluid--structure interaction between channel flow and an elastic flap mounted perpendicular to the bottom wall; the incoming flow exerts pressure on the flap and induces oscillatory motion. Reference solutions from several solver combinations are available, and the accuracy of our isogeometric solver and the spline-based coupling interface on this benchmark has been established in \citep{li2026isogeometric}. Here we therefore focus on the parameterization component and compare it to classical mesh-update strategies.

\paragraph{Problem setup}
The physical domain, geometric dimensions, and material parameters are summarized in \Cref{fig:perpendicular-flap-setup}. The flow is incompressible; a uniform horizontal velocity is prescribed on $\Gamma_{\mathrm{inflow}}$, a free boundary condition on $\Gamma_{\mathrm{outflow}}$, and no-slip on the top and bottom walls. The flap is clamped on $\Gamma_{\mathrm{fixed}}$ and loaded by the fluid traction on its wetted boundary $\Gamma_{fs}$. The structure is modeled as a St.\ Venant--Kirchhoff solid. 
Time integration is carried out with the backward Euler scheme up to $T = 5\,\mathrm{s}$ using a constant time step $\Delta t = 0.01\,\mathrm{s}$. The two subproblems are coupled in a partitioned manner through preCICE using an implicit coupling scheme with IQN-ILS quasi-Newton acceleration~\citep{Delaisse2023QuasiNewton}.

\begin{figure}[H]
    \centering
    \begin{minipage}[c]{0.60\linewidth}
        \centering
        \includegraphics[width=\linewidth]{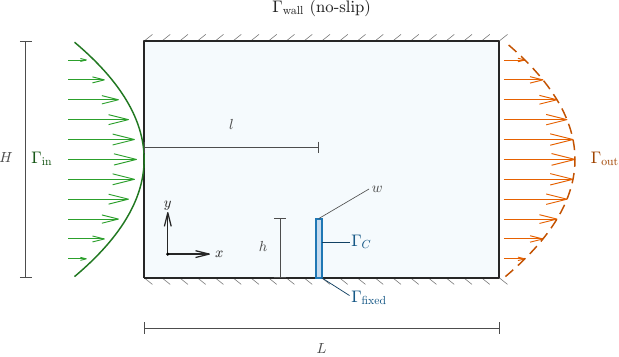}
    \end{minipage}%
    \hfill
    \begin{minipage}[c]{0.36\linewidth}
        \centering
        \footnotesize
        \setlength{\tabcolsep}{6pt}
        \renewcommand{\arraystretch}{1.25}
        \begin{tabular}{@{}l r l@{}}
            \toprule
            \multicolumn{3}{c}{\textsc{Dimensions}} \\
            \midrule
            $L$  & $6.00$  & m \\
            $H$  & $4.00$  & m \\
            $l$  & $2.95$  & m \\
            $h$  & $1.00$  & m \\
            $w$  & $0.10$  & m \\
            \midrule
            \multicolumn{3}{c}{\textsc{Fluid}} \\
            \midrule
            $\rho^f$ & $1.0$          & kg/m$^3$ \\
            $\nu^f$  & $1.0\times10^{-5}$ & m$^2$/s \\
            \midrule
            \multicolumn{3}{c}{\textsc{Solid}} \\
            \midrule
            $E$      & $4\times 10^{6}$   & N/m$^2$ \\
            $\nu^s$  & $0.3$              & --      \\
            $\rho^s$ & $3\times 10^{3}$   & kg/m$^3$ \\
            \bottomrule
        \end{tabular}
    \end{minipage}
    \caption{Setup and parameters of the perpendicular-flap 
    benchmark. The geometry schematic is adapted from the preCICE 
    documentation \citep{Chourdakis2021}.}
    \label{fig:perpendicular-flap-setup}
\end{figure}

\paragraph{Snapshot of the coupled response}
\Cref{fig:flap-snapshots} shows the fluid velocity magnitude together with the regenerated isogeometric parameterization at six instants spanning the simulation interval. As the flap oscillates in response to the channel flow, the surrounding patches deform substantially near the flap tip, yet the barrier-patch parameterization keeps every element well-shaped at every time step.

\begin{figure}[H]
    \centering
    \includegraphics[width=0.5\linewidth]{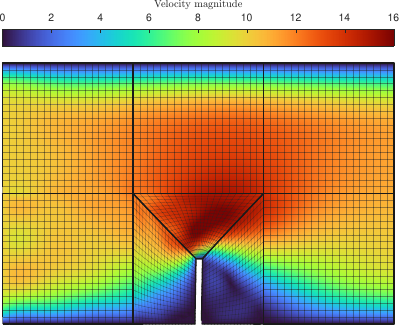} \\
    \subfigure[$t = 0.1\,\mathrm{s}$]{%
        \includegraphics[width=0.30\linewidth]{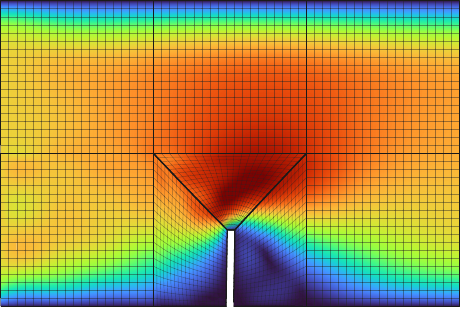}}
    \hfill
    \subfigure[$t = 1.0\,\mathrm{s}$]{%
        \includegraphics[width=0.30\linewidth]{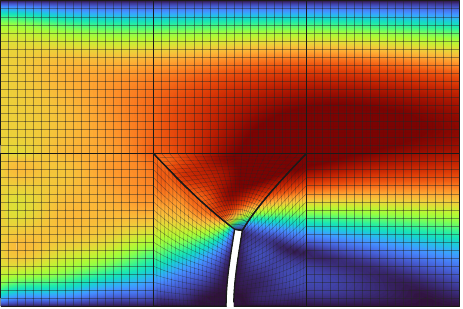}}
    \hfill
    \subfigure[$t = 2.0\,\mathrm{s}$]{%
        \includegraphics[width=0.30\linewidth]{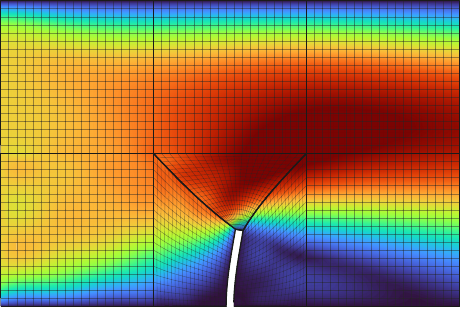}}

    \vspace{0.4em}

    \subfigure[$t = 3.0\,\mathrm{s}$]{%
        \includegraphics[width=0.30\linewidth]{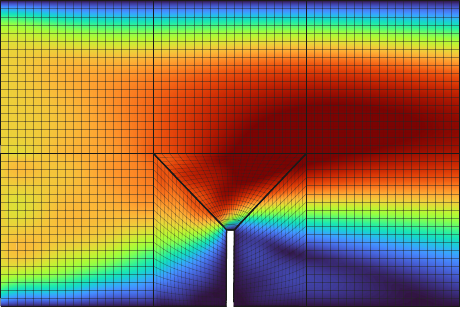}}
    \hfill
    \subfigure[$t = 4.0\,\mathrm{s}$]{%
        \includegraphics[width=0.30\linewidth]{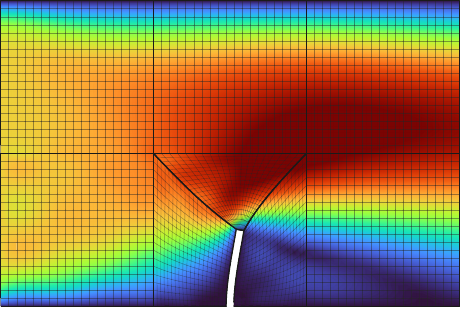}}
    \hfill
    \subfigure[$t = 5.0\,\mathrm{s}$]{%
        \includegraphics[width=0.30\linewidth]{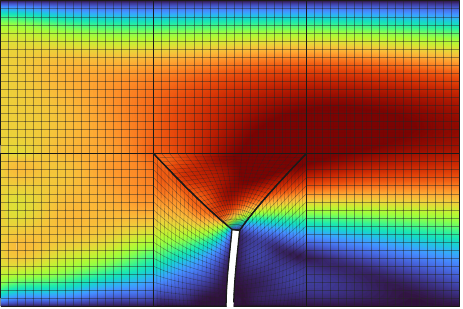}}

    \caption{Time history of the fluid velocity magnitude and the isogeometric parameterization for the perpendicular-flap benchmark, shown at six instants spanning $[0.1, 5.0]\,\mathrm{s}$. The flap oscillates in response to the channel flow, and the surrounding patches are reparameterized at each time step to remain valid throughout the deformation. All panels share the common color bar shown above each frame.}
    \label{fig:flap-snapshots}
\end{figure}

\paragraph{Comparison with classical mesh-update strategies}
We compare the barrier-patch (BP) parameterization with five classical mesh-update strategies, while keeping the fluid and structural discretizations identical: harmonic extension (HE), incremental harmonic extension (IHE), linear elasticity (LE), incremental linear elasticity (ILE), and tangential incremental nonlinear elasticity (TINE).

\begin{figure}[H]
    \centering
    \begin{tikzpicture}
        \definecolor{cBP}{RGB}{31,119,180}
        \definecolor{cHE}{RGB}{230,97,1}
        \definecolor{cIHE}{RGB}{44,160,44}
        \definecolor{cLE}{RGB}{214,39,40}
        \definecolor{cILE}{RGB}{148,103,189}
        \definecolor{cTINE}{RGB}{140,86,75}
        \begin{axis}[
          width=0.82\linewidth,
          height=0.36\textheight,
          xlabel={Time [s]},
          ylabel={Tip displacement $x$ [mm]},
          xlabel style={font=\normalsize, yshift=-3pt},
          ylabel style={font=\normalsize, yshift=6pt},
          tick label style={font=\small},
          xmin=0, xmax=5,
          axis line style={line width=1.2pt, black!85},
          tick style={line width=1.0pt, black!85},
          major tick length=5pt,
          minor tick length=3pt,
          grid=both,
          major grid style={line width=0.6pt, draw=gray!30},
          minor grid style={line width=0.4pt, draw=gray!15, dashed},
          minor tick num=1,
          filter discard warning=false,
          enlarge x limits=false,
          enlarge y limits=0.08,
          legend style={
            at={(0.97,0.03)}, anchor=south east,
            draw=black!50, line width=0.8pt,
            fill=white, fill opacity=0.92, text opacity=1,
            font=\small,
            column sep=8pt, row sep=2pt, inner sep=7pt,
            rounded corners=2pt, legend columns=3
          },
          legend cell align={left},
          legend image post style={line width=1.8pt},
        ]
        \addplot[color=cBP, solid, line width=2.5pt, each nth point=2]
          table [x index=0, y expr=\thisrowno{3}*1000, col sep=space, skip first n=1]
          {BP_solid_watchpoint.dat};
        \addlegendentry{BP}
        \addplot[color=cHE, dash pattern=on 8pt off 4pt, line width=2.3pt, each nth point=2]
          table [x index=0, y expr=\thisrowno{3}*1000, col sep=space, skip first n=1]
          {HE_solid_watchpoint.dat};
        \addlegendentry{HE}
        \addplot[color=cIHE, dash pattern=on 1pt off 3pt, line width=2.4pt, each nth point=2]
          table [x index=0, y expr=\thisrowno{3}*1000, col sep=space, skip first n=1]
          {IHE_solid_watchpoint.dat};
        \addlegendentry{IHE}
        \addplot[color=cLE, dash pattern=on 6pt off 3pt on 1pt off 3pt, line width=2.3pt, each nth point=2]
          table [x index=0, y expr=\thisrowno{3}*1000, col sep=space, skip first n=1]
          {LE_solid_watchpoint.dat};
        \addlegendentry{LE}
        \addplot[color=cILE, dash pattern=on 4pt off 3pt, line width=2.3pt, each nth point=2]
          table [x index=0, y expr=\thisrowno{3}*1000, col sep=space, skip first n=1]
          {ILE_solid_watchpoint.dat};
        \addlegendentry{ILE}
        \addplot[color=cTINE, dash pattern=on 6pt off 2pt on 2pt off 2pt on 1pt off 2pt, line width=2.3pt, each nth point=2]
          table [x index=0, y expr=\thisrowno{3}*1000, col sep=space, skip first n=1]
          {TINE_solid_watchpoint.dat};
        \addlegendentry{TINE}
        \end{axis}
    \end{tikzpicture}
    \caption{Perpendicular-flap benchmark: horizontal tip displacement histories obtained with six mesh-update strategies.}
    \label{fig:flap-tip-displacement}
\end{figure}

\Cref{fig:flap-tip-displacement} shows the tip-displacement histories over $T = 5\,\mathrm{s}$. The six curves are visually indistinguishable on the plotted scale, so the barrier-patch parameterization reproduces the response of the established mesh-update strategies in this moderate-deformation regime.

\begin{figure}[H]
    \centering
    \begin{tikzpicture}
        \definecolor{cBP}{RGB}{31,119,180}
        \definecolor{cHE}{RGB}{230,97,1}
        \definecolor{cIHE}{RGB}{44,160,44}
        \definecolor{cLE}{RGB}{214,39,40}
        \definecolor{cILE}{RGB}{148,103,189}
        \definecolor{cTINE}{RGB}{140,86,75}
        \begin{axis}[
          width=0.82\linewidth,
          height=0.36\textheight,
          xlabel={Time [s]},
          ylabel={$\mathcal{J}_{\min}$},
          xlabel style={font=\normalsize, yshift=-3pt},
          ylabel style={font=\normalsize, yshift=6pt},
          tick label style={font=\small},
          xmin=0, xmax=5,
          axis line style={line width=1.2pt, black!85},
          tick style={line width=1.0pt, black!85},
          major tick length=5pt,
          minor tick length=3pt,
          grid=both,
          major grid style={line width=0.6pt, draw=gray!30},
          minor grid style={line width=0.4pt, draw=gray!15, dashed},
          minor tick num=1,
          enlarge x limits=false,
          enlarge y limits=0.08,
          legend style={
            at={(0.97,0.03)}, anchor=south east,
            draw=black!50, line width=0.8pt,
            fill=white, fill opacity=0.92, text opacity=1,
            font=\small,
            column sep=8pt, row sep=2pt, inner sep=7pt,
            rounded corners=2pt, legend columns=3
          },
          legend cell align={left},
          legend image post style={line width=1.8pt},
        ]
        \addplot[color=cBP, solid, line width=2.5pt, each nth point=2]
          table [x=time, y=minScaledJ, col sep=comma]
          {mesh_quality_jacobian_barrier.csv};
        \addlegendentry{BP}
        \addplot[color=cHE, dash pattern=on 8pt off 4pt, line width=2.3pt, each nth point=2]
          table [x=time, y=minScaledJ, col sep=comma]
          {mesh_quality_jacobian_he.csv};
        \addlegendentry{HE}
        \addplot[color=cIHE, dash pattern=on 1pt off 3pt, line width=2.4pt, each nth point=2]
          table [x=time, y=minScaledJ, col sep=comma]
          {mesh_quality_jacobian_ihe.csv};
        \addlegendentry{IHE}
        \addplot[color=cLE, dash pattern=on 6pt off 3pt on 1pt off 3pt, line width=2.3pt, each nth point=2]
          table [x=time, y=minScaledJ, col sep=comma]
          {mesh_quality_jacobian_le.csv};
        \addlegendentry{LE}
        \addplot[color=cILE, dash pattern=on 4pt off 3pt, line width=2.3pt, each nth point=2]
          table [x=time, y=minScaledJ, col sep=comma]
          {mesh_quality_jacobian_ile.csv};
        \addlegendentry{ILE}
        \addplot[color=cTINE, dash pattern=on 6pt off 2pt on 2pt off 2pt on 1pt off 2pt, line width=2.3pt, each nth point=2]
          table [x=time, y=minScaledJ, col sep=comma]
          {mesh_quality_jacobian_tine.csv};
        \addlegendentry{TINE}
        \end{axis}
    \end{tikzpicture}
    \caption{Perpendicular-flap benchmark: minimum scaled Jacobian $\mathcal{J}_{\min}$ of the fluid ALE mesh over time. The barrier-patch method maintains $\mathcal{J}_{\min}$ in a narrow band around $0.70$.}
    \label{fig:flap-min-jacobian}
\end{figure}

\Cref{fig:flap-min-jacobian} reports the minimum scaled Jacobian of the fluid ALE mesh over $[0, 5]\,\mathrm{s}$. All six methods remain strictly positive throughout, confirming the validity of each parameterization. HE, IHE, LE, ILE, and TINE track the oscillation of the flap: $\mathcal{J}_{\min}$ decreases during phases of large tip displacement and recovers as the flap returns toward its undeformed configuration. The barrier-patch method, by contrast, maintains $\mathcal{J}_{\min}$ above $0.6$ with only minor fluctuations, yielding a mesh quality that is effectively decoupled from the structural motion.

\subsection{Turek-Hron FSI 2}
\label{sec:turek-hron-fsi2}

The second benchmark is the Turek--Hron FSI2 problem~\citep{turek2006proposal}, a standard test case for fluid--structure interaction. It models the two-dimensional flow-induced oscillations of an elastic beam clamped to the downstream side of a rigid cylinder. The parameters of the FSI2 configuration place the system in a regime of large-amplitude self-sustained oscillations, making it a demanding test of both solver accuracy and mesh robustness under sustained deformation.

\paragraph{Problem setup} The physical domain, shown in \Cref{fig:turek-hron-setup}, is a rectangular channel of length $L = 2.5\,\mathrm{m}$ and height $H = 0.41\,\mathrm{m}$. A rigid circular cylinder of diameter $D = 0.1\,\mathrm{m}$ is centered at $(0.2, 0.2)\,\mathrm{m}$, with an elastic beam of length $l = 0.35\,\mathrm{m}$ and thickness $h = 0.02\,\mathrm{m}$ clamped to its downstream side.

\begin{figure}[H]
    \centering
    \includegraphics[width=\linewidth]{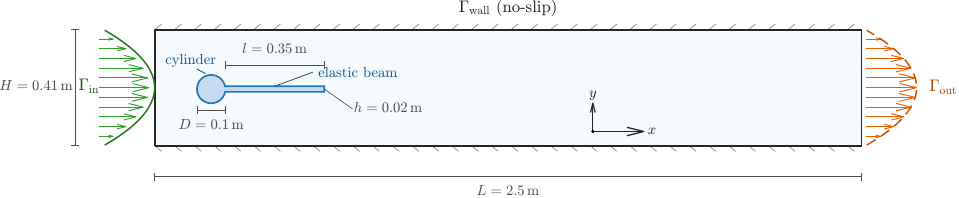}
    \caption{Turek--Hron FSI2 benchmark setup: a rigid cylinder with an elastic beam clamped to its downstream side, immersed in a channel flow. The inflow is a parabolic profile; the outflow boundary is stress-free.}
    \label{fig:turek-hron-setup}
\end{figure}

A parabolic velocity profile is prescribed at the inflow boundary $\Gamma_{\text{in}}$,
\begin{equation}
  v_{\text{in}}(y) = 1.5\,\bar{v}\,\frac{4\,y\,(H - y)}{H^2},
\end{equation}
with mean velocity $\bar{v} = 1\,\mathrm{m/s}$, corresponding to a Reynolds number $\mathrm{Re} = 100$. No-slip conditions are imposed on the upper and lower walls, and a stress-free condition is applied at the outlet $\Gamma_{\text{out}}$.

The fluid has density $\rho^f = 10^3\,\mathrm{kg/m^3}$ and dynamic viscosity $\mu^f = 1.0\,\mathrm{Pa \cdot s}$. The solid is modeled as a St.~Venant--Kirchhoff material with density $\rho^s = 10^4\,\mathrm{kg/m^3}$, shear modulus $\mu^s = 0.5 \times 10^6\,\mathrm{Pa}$, and Poisson's ratio $\nu^s = 0.4$. Time integration uses a constant step $\Delta t = 0.0025\,\mathrm{s}$ over the interval $[0, 18]\,\mathrm{s}$.

Fluid--structure coupling follows the scheme used in the previous benchmark: a partitioned implicit solver with Aitken relaxation~\citep{Kutteler2008Aitken}. The fluid parameterization is handled by the barrier-patch parameterization of \Cref{sec:mesh-generation}; at each coupling iteration, only the patches adjacent to the elastic beam are regenerated, while the surrounding extension patches remain fixed.

\begin{figure}[H]
    \centering
    \includegraphics[width=0.85\linewidth]{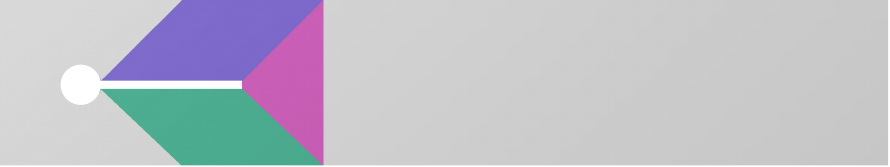}
    \caption{Patch decomposition of the fluid domain for the FSI2 benchmark. The rigid cylinder (white disc) and the elastic beam (white strip) form the inner obstacle. Three coloured patches conforming to the cylinder--beam interface (purple, magenta, and green) constitute the \emph{regenerated} region: at every coupling iteration these patches are reparameterized to follow the deformed beam, using the barrier-patch construction of \Cref{sec:mesh-generation}. The remaining grey region is covered by \emph{frozen} extension patches that span the rest of the channel up to the inflow, outflow, and wall boundaries; these are generated once and remain fixed throughout the simulation. Confining the regeneration to a narrow neighbourhood of the beam keeps the per-iteration meshing cost essentially independent of the channel length.}
    \label{fig:turek-hron-patches}
\end{figure}

\paragraph{Patch layout} The fluid domain is decomposed into a small set of spline patches assembled around the cylinder--beam obstacle, as illustrated in \Cref{fig:turek-hron-patches}. Three patches form a thin layer that conforms to the obstacle: two wedge-shaped patches above and below the beam, and a patch closing off its tip. These three patches are regenerated at every coupling iteration so that their boundaries track the deformed beam exactly. The remainder of the channel (the inflow region upstream of the cylinder, the long downstream wake, and the upper and lower wall strips) is tiled by a fixed set of extension patches that are constructed once and never modified during the simulation. This choice isolates the geometric variability to a narrow neighborhood of the beam, where the deformation is concentrated, and keeps the cost of each remeshing operation independent of the channel length.

\paragraph{Results}
We evaluate the barrier-patch parameterization along three dimensions: the validity and quality of the regenerated mesh under sustained large deformation, the accuracy of the FSI response against published reference values, and the computational overhead introduced by parameterization within the coupling loop.

\Cref{fig:turek-hron-fsi2-snapshots} presents the velocity field and the regenerated parameterization at two instants in the periodic regime, chosen near opposite extrema of the beam oscillation. The barrier-patch parameterization delivers a well-shaped parameterization in both configurations, with no visible element degradation even at maximum tip deflection. Across the full simulation window, the minimum scaled Jacobian remains strictly positive, confirming that parameterization validity is preserved throughout the sustained large-amplitude motion.

\begin{figure}[H]
    \centering
  \includegraphics[width=0.5\linewidth,trim=250 449 250 85,clip]{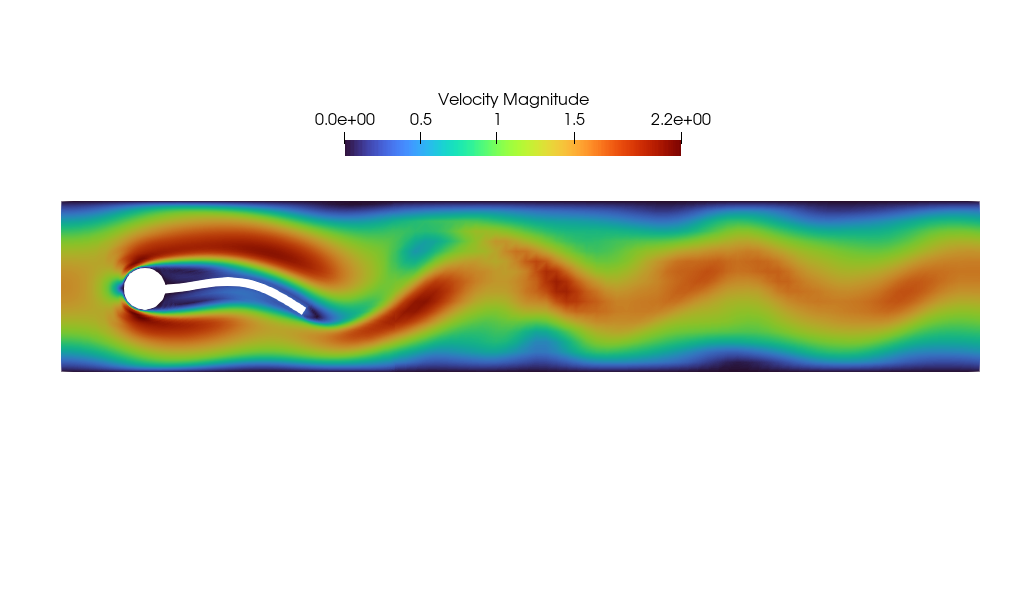}

  \vspace{0.15em}

  \subfigure[Velocity field, $t = 14.93\,\mathrm{s}$.]{
    \includegraphics[width=0.47\linewidth,trim=55 230 35 190,clip]{Turek-Hron-fsi2-velocity14.93.png}
  }
  \hfill
  \subfigure[Velocity field, $t = 17.28\,\mathrm{s}$.]{
    \includegraphics[width=0.47\linewidth,trim=55 230 35 190,clip]{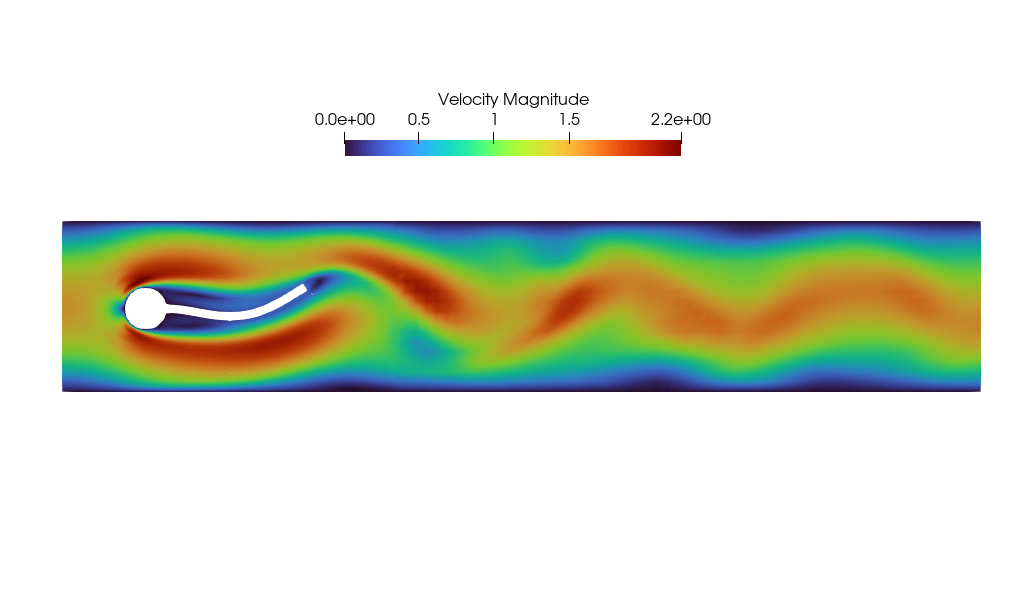}
  }

  \vspace{0.3em}

  \subfigure[Barrier-patch parameterization around the beam, $t = 14.93\,\mathrm{s}$.]{
    \includegraphics[width=0.47\linewidth]{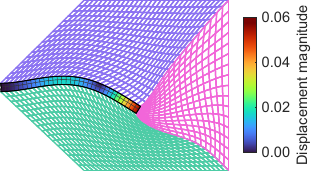}
  }
  \hfill
  \subfigure[Barrier-patch parameterization around the beam, $t = 17.28\,\mathrm{s}$.]{
    \includegraphics[width=0.47\linewidth]{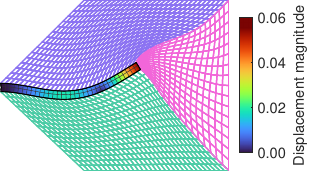}
  }
  \caption{Turek--Hron FSI2 benchmark: velocity field and regenerated mesh at two instants in the periodic regime, near opposite extrema of the beam oscillation. The velocity panels share the common color bar shown at the top.}
    \label{fig:turek-hron-fsi2-snapshots}
\end{figure}

To assess solution accuracy, \Cref{fig:fsi2-tip-displacement} reports the time history of the vertical tip displacement of the elastic beam. After an initial transient of roughly $5\,\mathrm{s}$, the response settles into a periodic regime whose amplitude agrees closely with the reference value $\pm 80.6\,\mathrm{mm}$ reported by Turek and Hron~\citep{turek2006proposal}, indicating that the barrier-patch parameterization does not compromise the fidelity of the coupled FSI response.

\begin{figure}[H]
    \centering
    \begin{tikzpicture}
        \definecolor{nbBlue}{RGB}{31,119,180}   
        \definecolor{nbOrange}{RGB}{230,97,1}   
        \begin{axis}[
          width=0.82\linewidth,
          height=0.36\textheight,
          xlabel={Time [s]},
          ylabel={Tip displacement $y$ [mm]},
          xlabel style={font=\normalsize, yshift=-3pt},
          ylabel style={font=\normalsize, yshift=6pt},
          tick label style={font=\small},
          xmin=0, xmax=19,
          axis line style={line width=1.2pt, black!85},
          tick style={line width=1.0pt, black!85},
          major tick length=5pt,
          minor tick length=3pt,
          grid=both,
          major grid style={line width=0.6pt, draw=gray!30},
          minor grid style={line width=0.4pt, draw=gray!15, dashed},
          minor tick num=1,
          enlarge x limits=false,
          enlarge y limits=0.08,
          legend style={
            at={(0.03,0.97)},
            anchor=north west,
            draw=black!50,
            line width=0.8pt,
            fill=white,
            fill opacity=0.92,
            text opacity=1,
            font=\small,
            row sep=3pt,
            inner sep=7pt,
            rounded corners=2pt
          },
          legend cell align={left},
          legend image post style={line width=2.0pt}
        ]
        \addplot[
          color=nbBlue,
          line width=1.6pt,
          each nth point=2,
          filter discard warning=false,
          unbounded coords=discard
        ] table [x index=0, y expr=\thisrowno{5}*1000, col sep=space, skip first n=1]
          {Turek_FSI2.txt};
        \addlegendentry{Present}

        \addplot[
          color=nbOrange,
          dash pattern=on 10pt off 5pt,
          line width=1.4pt,
          forget plot
        ] coordinates {(0,80.6) (19,80.6)};
        \addplot[
          color=nbOrange,
          dash pattern=on 10pt off 5pt,
          line width=1.4pt
        ] coordinates {(0,-80.6) (19,-80.6)};
        \addlegendentry{Ref.\ $\pm 80.6$\,mm}
        \end{axis}
    \end{tikzpicture}
    \caption{FSI2 benchmark: time history of the vertical tip displacement of the elastic beam. The dashed orange lines mark the reference amplitude $\pm 80.6\,\mathrm{mm}$ reported by Turek and Hron~\citep{turek2006proposal}.}
    \label{fig:fsi2-tip-displacement}
\end{figure}

\Cref{fig:fsi2-timing} reports the cumulative wall-clock time, decomposed into contributions from the beam solver, the flow solver, and the barrier-patch parameterization. The flow solve dominates the total cost, while parameterization contributes a modest share that grows approximately linearly with simulation time. The barrier-patch procedure therefore imposes an overhead comparable to that of conventional mesh-motion schemes, despite offering the stronger guarantee of strictly positive Jacobians under sustained large deformation.

\begin{figure}[H]
    \centering
    \begin{tikzpicture}
        \definecolor{cBeam}{RGB}{31,119,180}    
        \definecolor{cFlow}{RGB}{230,97,1}      
        \definecolor{cALE}{RGB}{44,160,44}      
        \begin{axis}[
          width=0.82\linewidth,
          height=0.36\textheight,
          xlabel={Time [s]},
          ylabel={Cumulative wall-clock time [s]},
          xlabel style={font=\normalsize, yshift=-3pt},
          ylabel style={font=\normalsize, yshift=6pt},
          tick label style={font=\small},
          xmin=0, xmax=19,
          ymin=0,
          axis line style={line width=1.2pt, black!85},
          tick style={line width=1.0pt, black!85},
          major tick length=5pt,
          minor tick length=3pt,
          axis on top=false,
          grid=both,
          major grid style={line width=0.6pt, draw=gray!30, opacity=0.4},
          minor grid style={line width=0.6pt, draw=gray!15, opacity=0.4, dashed},
          minor tick num=1,
          enlarge x limits=false,
          enlarge y limits=false,
          legend style={
            at={(0.03,0.97)},
            anchor=north west,
            draw=black!50,
            line width=0.8pt,
            fill=white,
            fill opacity=0.95,
            text opacity=1,
            font=\small,
            row sep=3pt,
            inner sep=7pt,
            rounded corners=2pt
          },
          legend cell align={left},
          legend image post style={line width=0pt},
          stack plots=y,
          area style,
        ]
        \addplot[
          fill=cBeam, fill opacity=1.0,
          draw=cBeam!70!black, line width=1.6pt,
          each nth point=5,
          filter discard warning=false,
          unbounded coords=discard
        ] table [x index=0, y index=10, col sep=space, skip first n=1]
          {Turek_FSI2.txt} \closedcycle;
        \addlegendentry{Beam solver}

        \addplot[
          fill=cFlow, fill opacity=1.0,
          draw=cFlow!70!black, line width=1.6pt,
          each nth point=5,
          filter discard warning=false,
          unbounded coords=discard
        ] table [x index=0, y index=9, col sep=space, skip first n=1]
          {Turek_FSI2.txt} \closedcycle;
        \addlegendentry{Flow solver}

        \addplot[
          fill=cALE, fill opacity=1.0,
          draw=cALE!70!black, line width=1.6pt,
          each nth point=5,
          filter discard warning=false,
          unbounded coords=discard
        ] table [x index=0, y index=8, col sep=space, skip first n=1]
          {Turek_FSI2.txt} \closedcycle;
        \addlegendentry{Barrier-patch mesh}
        \end{axis}
    \end{tikzpicture}
    \caption{FSI2 benchmark: cumulative wall-clock time decomposed into the beam solver, the flow solver, and the barrier-patch mesh-regeneration step.}
    \label{fig:fsi2-timing}
\end{figure}

\subsection{Sphere-in-channel benchmarks in three dimensions}
\label{sec:sphere-falling-channel}
\label{sec:falling-sphere}

To assess the method under large translational motion, we consider two three-dimensional sphere-in-channel configurations. The prescribed-translation benchmark of Di Cristofaro et al.~\citep{DiCristofaro2025ALEFlying} tests whether a fixed spline space and patch topology remain valid over a $40\,\mathrm{m}$ displacement and permits direct comparison of failure time, mesh reconnection, and mesh-size growth. The free-fall benchmark then tests the two-way coupled response against published terminal-velocity data.

\subsubsection{Prescribed large translation}
\label{sec:falling-sphere-prescribed}

\paragraph{Problem setup}
We first consider the prescribed-translation case of Di Cristofaro et al.~\citep{DiCristofaro2025ALEFlying}, illustrated in \Cref{fig:prescribed-falling-sphere}. The fluid occupies a cylinder of length $L=60\,\mathrm{m}$ and diameter $D_2=20\,\mathrm{m}$, and contains a concentric rigid sphere of diameter $D_1=8\,\mathrm{m}$. Thus the channel and sphere radii are $10\,\mathrm{m}$ and $4\,\mathrm{m}$, respectively. With the axial coordinate spanning $0\leq z\leq 60\,\mathrm{m}$, the sphere center starts at $z_c(0)=50\,\mathrm{m}$ above the lower base and is assigned the velocity
\begin{equation}
    \mathbf{u}^f\big|_{\Gamma_s}=-5\,\mathbf{e}_z\ \mathrm{m/s}.
    \label{eq:prescribed-sphere-velocity}
\end{equation}
Homogeneous Dirichlet conditions are imposed on the remaining fixed boundary. As in the reference study, $\rho^f=1\,\mathrm{kg/m^3}$ and $\mu^f=1\,\mathrm{Pa\,s}$. The interval $[0,8]\,\mathrm{s}$ is discretized with $\Delta t=0.02\,\mathrm{s}$, during which the sphere translates by $40\,\mathrm{m}$. The fluid domain is represented by six volumetric spline patches, in the same O-grid topology used for the free-fall case of \Cref{sec:falling-sphere-free}, and is reconstructed independently from the current sphere position at each step; no connectivity change or refinement is performed, so the element and control-point counts are identical at every one of the 400 steps. The geometry and pressure space use degree $(2,2,2)$, while the velocity space is obtained by one degree elevation and uses degree $(3,3,3)$, giving a Taylor--Hood $Q_3/Q_2$ pair. The discretization contains $3\times3\times2=18$ volume elements per patch and 108 volume elements in total. After identification of conforming patch-interface degrees of freedom and elimination of Dirichlet unknowns, the coupled system contains 4052 global free degrees of freedom, comprising 3660 velocity degrees of freedom (1220 per component) and 392 pressure degrees of freedom.

\begin{figure}[H]
    \centering
    \begin{minipage}[b]{0.50\linewidth}
        \centering
        \includegraphics[height=8.6cm]{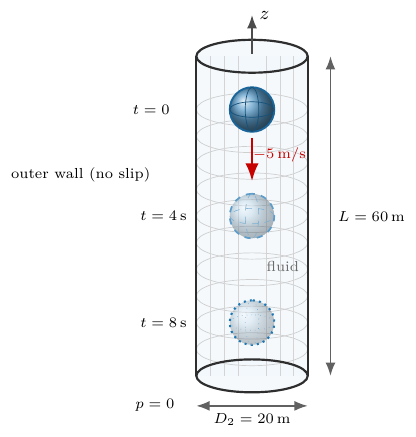}

        \smallskip
        {{\small (a) Longitudinal trajectory}}
    \end{minipage}
    \hfill
    \begin{minipage}[b]{0.47\linewidth}
        \centering
        \includegraphics[height=8.6cm]{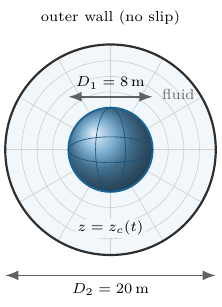}

        \smallskip
        {{\small (b) Transverse cross-section}}
    \end{minipage}
    \caption{Prescribed-translation setup based on Di Cristofaro et al.~\citep{DiCristofaro2025ALEFlying}. Panel~(a) shows the sphere moving vertically from $z_c=50\,\mathrm{m}$ to $z_c=10\,\mathrm{m}$ with velocity $-5\,\mathbf{e}_z\,\mathrm{m/s}$. Panel~(b) shows the transverse section through the sphere centre, with channel diameter $D_2=20\,\mathrm{m}$ and sphere diameter $D_1=8\,\mathrm{m}$.}
    \label{fig:prescribed-falling-sphere}
\end{figure}

\paragraph{Results}
\Cref{fig:prescribed-falling-sphere-fields} shows the fluid response alongside the regenerated volumetric parameterization at the two latest comparison times.

\begin{figure}[H]
    \centering
    \begin{minipage}[t]{0.49\linewidth}
        \centering
        \includegraphics[width=\linewidth]{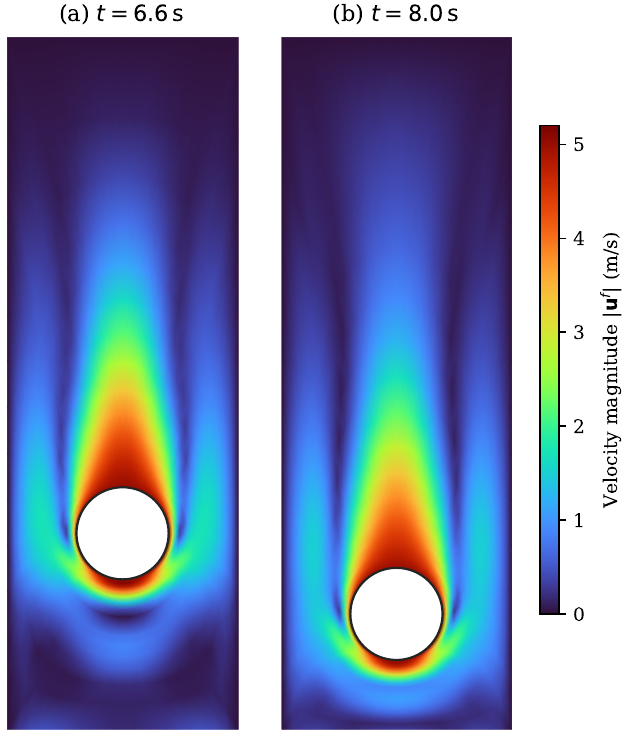}
    \end{minipage}
    \hfill
    \begin{minipage}[t]{0.49\linewidth}
        \centering
        \includegraphics[width=\linewidth]{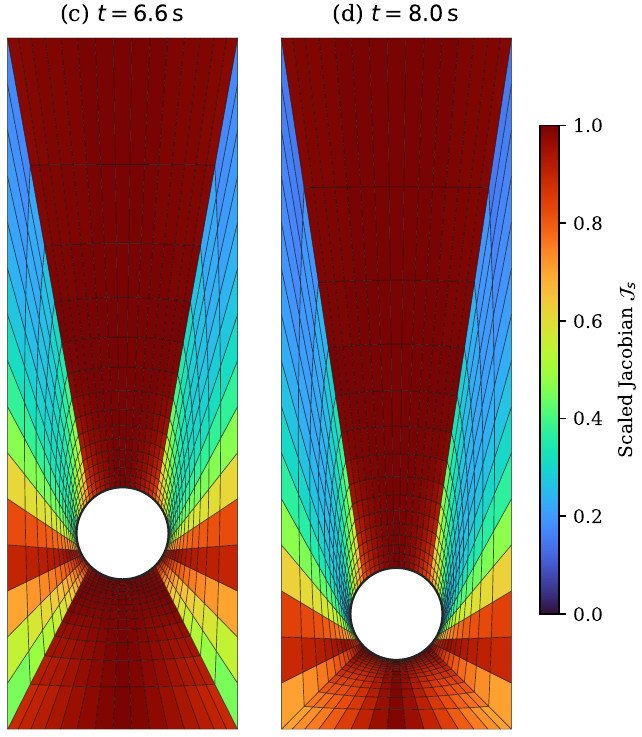}
    \end{minipage}
    \caption{Centre-plane views of the prescribed-translation test at $t=6.6\,\mathrm{s}$ and $8.0\,\mathrm{s}$, selected to match the two snapshot times reported by Di Cristofaro et al.~\citep{DiCristofaro2025ALEFlying}. Panels~(a,b) show the fluid-velocity magnitude with the common range $0\leq\lVert\mathbf{u}^f\rVert\leq5.2\,\mathrm{m/s}$. Panels~(c,d) show the regenerated volumetric parameterization, coloured by the cellwise scaled Jacobian $\mathcal{J}_s$ on the common rainbow scale $[0,1]$; the overlaid lines show the parameterization grid. The white disk denotes the rigid sphere.}
    \label{fig:prescribed-falling-sphere-fields}
\end{figure}

Panels~(c,d) are colored by the scaled Jacobian
$\mathcal{J}_s$, i.e., the quantity minimized in
\eqref{eq:jmin-monitor}. Since $\mathcal{J}_s$ is normalized by the lengths of
the parametric tangent vectors, it reflects their relative orientation rather
than the cell aspect ratio. The long cells in the upper central patches are
nearly orthogonal and therefore have values close to one; the more skewed
cells in the neighboring patches have lower values. Jumps at patch
interfaces are expected because the geometry is $C^0$-continuous there,
whereas its patch-wise derivatives need not be.

\begin{figure}[H]
    \centering
    \begin{tikzpicture}
    \definecolor{sphereBlue}{RGB}{31,119,180}
    \definecolor{cMin}{RGB}{192,57,43}
    \definecolor{cMax}{RGB}{39,174,96}
    \pgfmathsetmacro{\detjMinTimeA}{0}
    \pgfmathsetmacro{\detjMinTimeB}{8}
    \pgfmathsetmacro{\detjMinVal}{0.072177}
    \pgfmathsetmacro{\detjMaxTime}{4}
    \pgfmathsetmacro{\detjMaxVal}{0.093219}
    \begin{axis}[
        width=0.76\linewidth,
        height=6.4cm,
        xlabel={Time \(t\) (s)},
        ylabel={\(\mathcal{J}_{\min}\)},
        xmin=0, xmax=8,
        ymin=0.065, ymax=0.100,
        xtick={0,2,4,6,8},
        ytick={0.07,0.08,0.09,0.10},
        grid=both,
        major grid style={line width=0.5pt,draw=gray!30},
        minor grid style={line width=0.3pt,draw=gray!15,dashed},
        tick label style={font=\small},
        label style={font=\small},
        clip=false,
    ]
        \addplot[sphereBlue,line width=2.2pt]
            table[
                x=time,
                y=min_scaled_jacobian,
                col sep=comma
            ]{data/falling_sphere_min_scaled_jacobian.csv};

        \node[circle, draw=cMin, line width=1.6pt, minimum size=10pt,
              inner sep=0pt, fill=white, fill opacity=0.3]
              at (axis cs:\detjMinTimeA,\detjMinVal) {};
        \node[circle, fill=cMin, minimum size=3.5pt, inner sep=0pt]
              at (axis cs:\detjMinTimeA,\detjMinVal) {};
        \node[circle, draw=cMin, line width=1.6pt, minimum size=10pt,
              inner sep=0pt, fill=white, fill opacity=0.3]
              at (axis cs:\detjMinTimeB,\detjMinVal) {};
        \node[circle, fill=cMin, minimum size=3.5pt, inner sep=0pt]
              at (axis cs:\detjMinTimeB,\detjMinVal) {};
        \node[font=\scriptsize, anchor=south, align=center,
              fill=white, fill opacity=0.95, text opacity=1,
              draw=cMin, line width=0.9pt,
              inner sep=4pt, rounded corners=2pt]
              at ($(axis cs:4,\detjMinVal)+(0pt,8pt)$)
              {\textbf{Min} \(\pgfmathprintnumber[fixed,precision=4]{\detjMinVal}\) at \(t=0\) and \(8\,\mathrm{s}\)};

        \node[circle, draw=cMax, line width=1.6pt, minimum size=10pt,
              inner sep=0pt, fill=white, fill opacity=0.3]
              at (axis cs:\detjMaxTime,\detjMaxVal) {};
        \node[circle, fill=cMax, minimum size=3.5pt, inner sep=0pt]
              at (axis cs:\detjMaxTime,\detjMaxVal) {};
        \node[font=\scriptsize, anchor=south, align=center,
              fill=white, fill opacity=0.95, text opacity=1,
              draw=cMax, line width=0.9pt,
              inner sep=4pt, rounded corners=2pt]
              at ($(axis cs:\detjMaxTime,\detjMaxVal)+(0pt,8pt)$)
              {\textbf{Max} \(\pgfmathprintnumber[fixed,precision=4]{\detjMaxVal}\) at \(t=\pgfmathprintnumber[fixed,precision=0]{\detjMaxTime}\,\mathrm{s}\)};
    \end{axis}
\end{tikzpicture}
    \caption{Minimum scaled Jacobian $\mathcal{J}_{\min}$ over all spline patches during the prescribed translation, evaluated on 85 sampled configurations of the exported volumetric mesh. The global minimum and maximum are marked in red and green, respectively.}
    \label{fig:prescribed-falling-sphere-detj}
\end{figure}

$\mathcal{J}_{\min}$ remains strictly positive at every sampled configuration, confirming that the discrete bijectivity condition \eqref{eq:bijectivity-discrete} holds throughout the translation. Di Cristofaro et al.~\citep{DiCristofaro2025ALEFlying}, by contrast, report that their tetrahedral mesh fails at $t=2.0\,\mathrm{s}$ without reconnection, and that continuing the run requires reconnection together with an approximately $10\%$ increase in nodes and elements; the present six-patch parameterization completes the full $8.0\,\mathrm{s}$ run on the same fixed connectivity, with no growth in the number of control points or elements. As \Cref{fig:prescribed-falling-sphere-detj} shows, $\mathcal{J}_{\min}$ stays within the narrow band $[0.0722,0.0932]$ throughout, reaching its minimum at the two symmetric endpoints $t=0$ and $t=8\,\mathrm{s}$---where the sphere sits only $10\,\mathrm{m}$ from the nearest end wall---and its maximum at $t=4\,\mathrm{s}$, when the sphere is equidistant from both ends.

\subsubsection{Two-way coupled free fall}
\label{sec:falling-sphere-free}

\paragraph{Problem setup}
We next consider the freely falling sphere introduced by Casquero et al \citep{Casquero2015NURBSImmersed} and revisited by Black et al.\citep{Black2026ImmersedFSI}, whose finest-mesh solution we use as the reference in the comparison below. As in \Cref{sec:falling-sphere-prescribed}, the fluid domain is discretized into six volumetric spline patches conforming to a concentric sphere-in-cylinder geometry, here at a much smaller scale and with the sphere free to translate under the coupled fluid and gravitational forces rather than prescribed. A cylinder of radius $A=2\,\mathrm{cm}$ and height $4\,\mathrm{cm}$ contains a sphere of radius $a=0.25\,\mathrm{cm}$, initially centered at $z_c=2.5\,\mathrm{cm}$. The fluid parameters are $\rho^f=1\,\mathrm{g/cm^3}$ and $\mu^f=10\,\mathrm{dyn\,s/cm^2}$, the sphere density is $\rho^s=1.5\,\mathrm{g/cm^3}$, and gravity is $\mathbf{g}=-981\,\mathbf{e}_z\,\mathrm{cm/s^2}$. A traction-free condition is imposed at the top of the cylinder and no slip on the remaining fixed walls. The sphere is treated as rigid and constrained to axial translation, consistently with the analytical terminal-velocity model. Fluid traction and sphere displacement are exchanged by the implicit partitioned coupling scheme through preCICE. We use $\Delta t=10^{-4}\,\mathrm{s}$ and simulate until $T=0.1\,\mathrm{s}$.

Assuming Stokes flow, the wall-corrected terminal speed is
\begin{equation}
    v_{\infty}
    = \frac{1}{K}\frac{2g a^2(\rho^s-\rho^f)}{9\mu^f},
    \qquad
    K =
    \left[
        1-2.10443\frac{a}{A}
        +2.08877\left(\frac{a}{A}\right)^3
    \right]^{-1},
    \label{eq:falling-sphere-terminal-velocity}
\end{equation}
which gives $v_{\infty}=0.504824\,\mathrm{cm/s}$. The six-patch fluid parameterization contains $3\times3\times2=18$ volume elements per patch, hence 108 volume elements in total. The fluid is discretized in the same manner as in the prescribed-translation
benchmark, using the six-patch Taylor--Hood $Q_3/Q_2$ spline discretization described in \Cref{sec:falling-sphere-prescribed}.

\begin{figure}[H]
    \centering
    \includegraphics[width=0.85\linewidth]{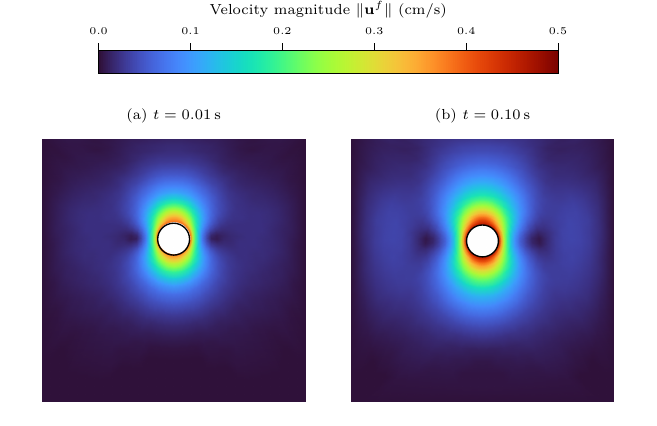}
    \caption{Two-way coupled free-fall test: fluid-velocity magnitude in the sphere's centre plane at $t=0.01\,\mathrm{s}$, shortly after release, and at $t=0.10\,\mathrm{s}$, close to the terminal speed. The white disk denotes the rigid sphere.}
    \label{fig:free-fall-velocity-snapshots}
\end{figure}

\paragraph{Results}
\Cref{fig:free-fall-velocity-snapshots} shows the fluid velocity magnitude in the sphere's centre plane shortly after release and near the end of the simulated interval, namely at $t=0.01\,\mathrm{s}$ and $t=0.10\,\mathrm{s}$.

\Cref{fig:free-falling-sphere-velocity} compares the sphere velocity obtained with the present formulation against the reference data.

\begin{figure}[H]
    \centering
    \begin{tikzpicture}
    \definecolor{nbBlue}{RGB}{31,119,180}
    \definecolor{refM3}{RGB}{204,76,2}
    \begin{axis}[
      width=0.82\linewidth,
      height=0.36\textheight,
      xlabel={Time \(t\) (s)},
      ylabel={Downward sphere speed (cm/s)},
      xlabel style={font=\normalsize, yshift=-3pt},
      ylabel style={font=\normalsize, yshift=6pt},
      tick label style={font=\small},
      xmin=0, xmax=0.1,
      ymin=0, ymax=0.56,
      xtick={0,0.02,0.04,0.06,0.08,0.10},
      axis line style={line width=1.2pt, black!85},
      tick style={line width=1.0pt, black!85},
      major tick length=5pt,
      minor tick length=3pt,
      grid=both,
      major grid style={line width=0.6pt, draw=gray!30},
      minor grid style={line width=0.4pt, draw=gray!15, dashed},
      minor tick num=1,
      enlarge x limits=false,
      enlarge y limits=false,
      legend style={
        at={(0.97,0.05)},
        anchor=south east,
        draw=black!50,
        line width=0.8pt,
        fill=white,
        fill opacity=0.92,
        text opacity=1,
        font=\small,
        row sep=3pt,
        inner sep=7pt,
        rounded corners=2pt
      },
      legend cell align={left},
      legend image post style={line width=2.0pt}
    ]
    \addplot[
      color=refM3, line width=1.6pt,
      each nth point=8, filter discard warning=false,
    ] table [x=time_s, y=paper_M3_3494400_elements_downward_speed_cm_s, col sep=comma]
      {data/falling_sphere_rigid_paper_comparison_data.csv};
    \addlegendentry{Black et al.~\citep{Black2026ImmersedFSI}: 3{,}494{,}400 elements}

    \addplot[
      color=nbBlue, line width=2.2pt,
      each nth point=8, filter discard warning=false,
    ] table [x=time_s, y=rigid_iga_108_elements_downward_speed_cm_s, col sep=comma]
      {data/falling_sphere_rigid_paper_comparison_data.csv};
    \addlegendentry{Present IGA-ALE/preCICE: 108 elements}

    \addplot[
      color=black!65, dash pattern=on 8pt off 4pt, line width=1.3pt,
      domain=0:0.1, samples=2,
    ] {0.5048};
    \addlegendentry{Wall-corrected analytical (0.5048\,cm/s)}
    \end{axis}
\end{tikzpicture}
    \caption{Two-way coupled free-fall test: downward sphere speed obtained with the present IGA-ALE/preCICE formulation, the finest mesh level reported by Black et al.~\citep{Black2026ImmersedFSI}, and the wall-corrected analytical terminal speed from \eqref{eq:falling-sphere-terminal-velocity}. The present discretization uses 108 volume elements with a Taylor--Hood $Q_3/Q_2$ pair and 4052 global free degrees of freedom (3660 velocity and 392 pressure).}
    \label{fig:free-falling-sphere-velocity}
\end{figure}

As shown in \Cref{fig:free-falling-sphere-velocity}, the sphere approaches a terminal speed of $0.5063\,\mathrm{cm/s}$. The relative difference from the wall-corrected value is $0.29\%$, while the difference from the final value of the finest reference curve is approximately $3.4\%$. For comparison, Black et al.~report a $3.02\%$ difference between their finest-grid result and the analytical value. Throughout the 1000 coupled time steps, the minimum scaled Jacobian $\mathcal{J}_{\min}$ of the volumetric parameterization remains in the narrow interval $[0.138429,0.138617]$. This second test therefore demonstrates that the present formulation yields accurate results under two-way coupling, not only mesh validity: the prescribed test isolates parameterization robustness over a long trajectory, while the free-fall test confirms that the same per-step parameterization remains both stable and accurate when the trajectory is determined by the coupled fluid force.

\subsection{Flow past a rotating square}
\label{sec:flow-past-square}

In the kinematic study of \Cref{sec:mesh-quality} we mapped out the admissible rotation range of the mesh-update strategies. We now return to the same geometry, but couple it to the incompressible Navier--Stokes equations. To validate the present method against an independent reference, we reproduce the rotating-square benchmark of Schalk et al.~\citep{mesh_regeneration_Graz}, whose block-structured meshing scheme handles the same connectivity-changing rotation problem in a finite element setting. The benchmark parameters and reference kinetic-energy and momentum histories were obtained directly from the authors and are used here for comparison. The simulation covers one full revolution of the inner square, during which the tangential-slip reparameterization of \Cref{sec:closed-domain} remains active throughout.

\begin{figure}[H]
    \centering
  \begin{tikzpicture}[x=0.5cm,y=0.5cm]
    \definecolor{nbBlue}{RGB}{31,119,180}
    \definecolor{patchOne}{RGB}{198,222,240}
    \definecolor{patchTwo}{RGB}{171,204,232}
    \definecolor{patchThree}{RGB}{140,183,222}
    \definecolor{patchFour}{RGB}{99,153,201}
    \definecolor{patchFive}{RGB}{224,238,248}

    \def\Lch{25}   
    \def\Hch{10}   
    \def\xc{8.3333} 
    \def\yc{5}       
    \def\xb{16.6667} 

    \coordinate (BL) at (0,0);
    \coordinate (BR) at (\xb,0);
    \coordinate (TR) at (\xb,\Hch);
    \coordinate (TL) at (0,\Hch);

    \coordinate (SBL) at (7.3333,4);
    \coordinate (SBR) at (9.3333,4);
    \coordinate (STR) at (9.3333,6);
    \coordinate (STL) at (7.3333,6);

    \fill[patchOne]   (SBL) -- (SBR) -- (BR) -- (BL) -- cycle;
    \fill[patchTwo]   (SBR) -- (STR) -- (TR) -- (BR) -- cycle;
    \fill[patchThree] (STL) -- (STR) -- (TR) -- (TL) -- cycle;
    \fill[patchFour]  (SBL) -- (STL) -- (TL) -- (BL) -- cycle;

    \fill[patchFive] (\xb,0) rectangle (\Lch,\Hch);

    \draw[gray!70, thin] (SBL) -- (BL);
    \draw[gray!70, thin] (SBR) -- (BR);
    \draw[gray!70, thin] (STR) -- (TR);
    \draw[gray!70, thin] (STL) -- (TL);
    \draw[black!85, dashed, line width=0.8pt] (\xb,0) -- (\xb,\Hch);

    \draw[black!85, line width=1.2pt] (0,0) rectangle (\Lch,\Hch);

    \draw[nbBlue!80!black, line width=1.4pt, pattern=north east lines, pattern color=nbBlue!60]
        (SBL) rectangle (STR);

    \draw[red!75!black, line width=1.1pt, -{Latex[length=2mm]}]
        (\xc,\yc) ++(0.62,0) arc[start angle=0, end angle=300, radius=0.62];
    \node[red!75!black, font=\small] at (\xc,\yc+1.55) {$\dot\theta$};

    \node[font=\small] at (4.2,1.4)   {$\Omega_1$};
    \node[font=\small] at (13.0,3.0)  {$\Omega_2$};
    \node[font=\small] at (4.2,8.6)   {$\Omega_3$};
    \node[font=\small] at (2.2,5.0)   {$\Omega_4$};
    \node[font=\small] at (21.0,5.0)  {$\Omega_5$};

    \foreach \y in {1.5,3.5,5,6.5,8.5}{
        \draw[green!45!black, -{Latex[length=2mm]}, line width=0.9pt] (-2.4,\y) -- (-0.3,\y);
    }
    \node[green!45!black, font=\small] at (-3.4,5) {$u_{\mathrm{in}}$};
    \node[green!45!black, font=\small] at (-1.4,-1.1) {inflow};

    \foreach \y in {1.5,3.5,5,6.5,8.5}{
        \draw[orange!85!black, -{Latex[length=2mm]}, line width=0.9pt] (\Lch+0.3,\y) -- (\Lch+2.4,\y);
    }
    \node[orange!85!black, font=\small] at (\Lch+3.6,5) {outflow};

    \node[font=\small] at (\Lch/2,-1.1) {no-slip wall: $\mathbf{u}=\mathbf{0}$};
    \node[font=\small] at (\Lch/2,\Hch+1.1) {no-slip wall: $\mathbf{u}=\mathbf{0}$};

    \draw[{Latex[length=1.8mm]}-{Latex[length=1.8mm]}] (0,\Hch+2.3) -- (\xb,\Hch+2.3) node[midway, above, font=\small] {$L/3$ from inlet};
    \draw[{Latex[length=1.8mm]}-{Latex[length=1.8mm]}] (\xb,\Hch+2.3) -- (\Lch,\Hch+2.3) node[midway, above, font=\small] {remainder};

    \draw[{Latex[length=1.8mm]}-{Latex[length=1.8mm]}] (-4.4,0) -- (-4.4,\Hch) node[midway, left, font=\small, rotate=90, anchor=south] {$H=10\,\mathrm{m}$};

    \draw[{Latex[length=1.6mm]}-{Latex[length=1.6mm]}] (7.3333,3.4) -- (9.3333,3.4) node[midway, below, font=\small] {$2\,\mathrm{m}$};

    \draw[gray!60, thin] (SBR) -- ++(2.2,-1.8);
    \node[font=\small, anchor=north west] at (9.6,2.4) {rotating body: $\mathbf{u}=\dot\theta\,\mathbf{e}_z\times(\mathbf{x}-\mathbf{x}_c)$};

    \node[draw, black!70, fill=white, align=left, font=\small, inner sep=7pt,
          rounded corners=1pt, text width=11.2cm, anchor=north]
        at (\Lch/2,-2.3) {%
            $L=25\,\mathrm{m}$, $H=10\,\mathrm{m}$, square side $=2\,\mathrm{m}$, $\mathbf{x}_c=(L/3,\,H/2)$ \\
            $u_{\mathrm{in}}=2\,\mathrm{m/s}$ (centerline), $\rho^f=1.0\,\mathrm{kg/m^3}$, $\nu^f=0.05\,\mathrm{m^2/s}$, $\dot\theta=2^\circ/\mathrm{s}$
        };
\end{tikzpicture}
    \caption{Rotating-square benchmark setup, following Schalk et al.~\citep{mesh_regeneration_Graz}: a channel of length $L=25\,\mathrm{m}$ and height $H=10\,\mathrm{m}$ contains a rigid square of side $2\,\mathrm{m}$ centered at $\mathbf{x}_c=(L/3,H/2)$. The region containing the square is decomposed into four body-fitted patches $\Omega_1,\dots,\Omega_4$, flush with the inlet and centered on the square, and joins a fixed downstream outflow extension $\Omega_5$.}
    \label{fig:rotating-setup}
\end{figure}

\paragraph{Problem setup}
We consider two-dimensional incompressible flow past a rigid square obstacle of side length $2\,\mathrm{m}$, rotating about its center $\mathbf{x}_c=(L/3,H/2)$ with prescribed angular velocity $\dot{\theta} = 2^\circ/\mathrm{s}$. The physical domain, shown in \Cref{fig:rotating-setup}, is a channel of length $L=25\,\mathrm{m}$ and height $H=10\,\mathrm{m}$; the region containing the square is decomposed into four body-fitted patches flush with the inlet and centered on the square, joined to a fixed downstream outflow extension.

A parabolic inflow profile with centerline velocity $u_{\mathrm{in}} = 2\,\mathrm{m/s}$ is prescribed at the inlet. No-slip conditions are imposed on the upper and lower walls, and a stress-free condition is applied at the outlet. On the square boundary, the velocity follows the rigid-body rotation
\begin{equation}
  \mathbf{v} = \dot{\theta}\,\mathbf{e}_z \times (\mathbf{x} - \mathbf{x}_c)
             = \bigl(-\dot{\theta}(y - y_c),\; \dot{\theta}(x - x_c)\bigr)^\top.
\end{equation}
The fluid density is $\rho^f=1.0\,\mathrm{kg/m^3}$ and the kinematic viscosity is $\nu^f = 0.05\,\mathrm{m^2/s}$.

The fluid domain is decomposed into five patches: four body-fitted patches surrounding the square and one fixed outflow patch. Each patch is parameterized by the barrier-patch method; the spline space is degree-elevated once and uniformly refined four times, with one additional refinement in the obstacle-normal direction on the four near-body patches, as detailed below. This yields 4864 elements and 49,543 mixed velocity--pressure degrees of freedom for a Taylor--Hood $Q_4/Q_3$ spline pair in the present computations, i.e.\ tensor-product degree $(4,4)$ for velocity and $(3,3)$ for pressure. For reference, Schalk et al.~\citep{mesh_regeneration_Graz} report 11,136 quadrilateral elements with a Taylor--Hood $Q_2/Q_1$ discretization. Since their global DoF count is not explicitly listed, we report a rough estimate based on standard structured-mesh scaling for $Q_2/Q_1$ in 2D, \(\text{DoFs}\approx 9N_e\), which gives about \(9\times 11{,}136 \approx 1.00\times10^5\) mixed velocity--pressure DoFs. We solve in G+Smo \citep{Juettler2014Gismo, Gismo2025Zenodo, Hana_paper_gsIncompressibleFlow} with $\Delta t=0.005\,\mathrm{s}$ over one revolution ($\theta\in[0^\circ,360^\circ]$, $T=180\,\mathrm{s}$).

\begin{figure}[H]
    \centering
    \includegraphics[width=0.50\linewidth]{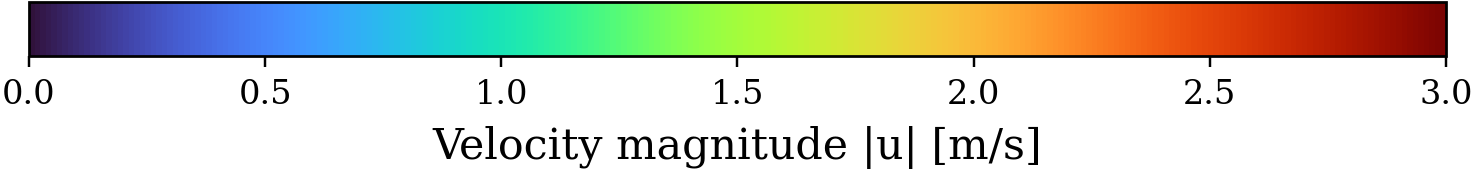}\\
    \subfigure[Velocity, $\theta = 60^\circ$.]{
      \includegraphics[width=0.45\linewidth]{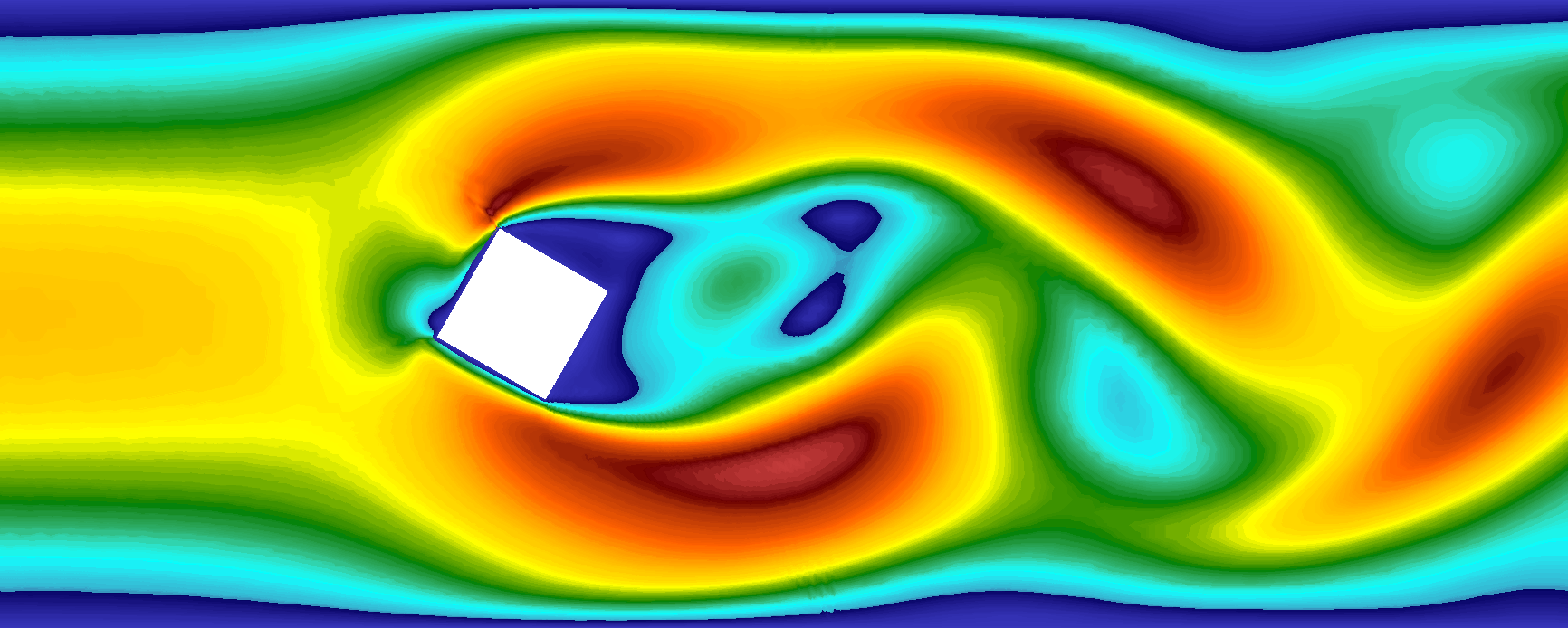}
    }
    \quad
    \subfigure[Mesh, $\theta = 60^\circ$.]{
      \includegraphics[width=0.45\linewidth]{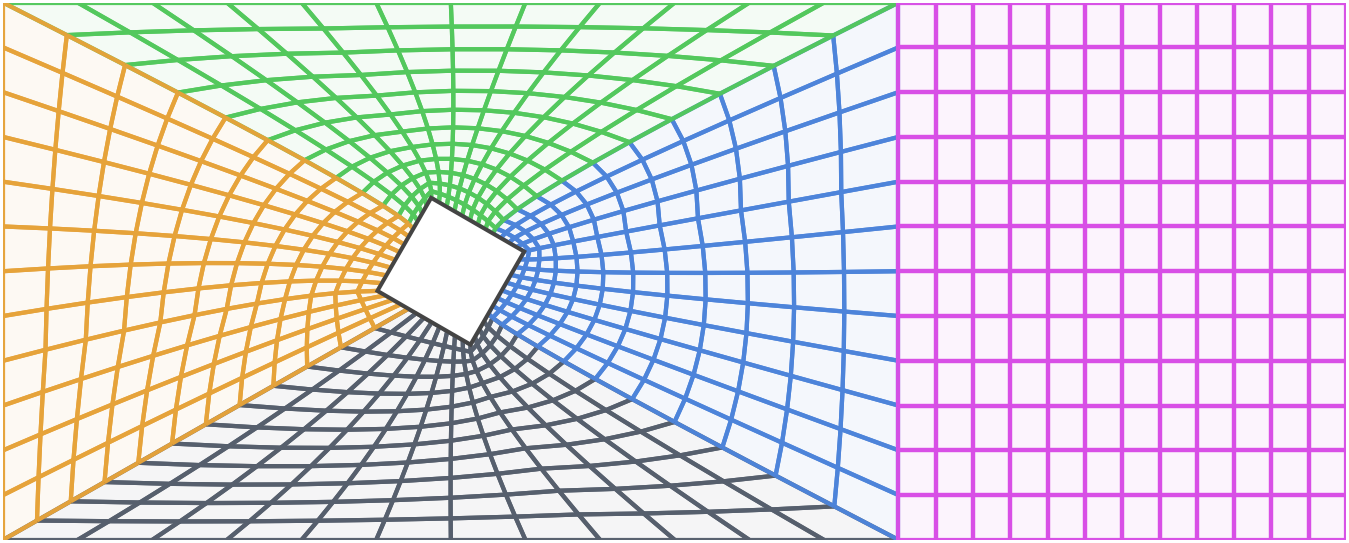}
    }\\
    \subfigure[Velocity, $\theta = 155^\circ$.]{
      \includegraphics[width=0.45\linewidth]{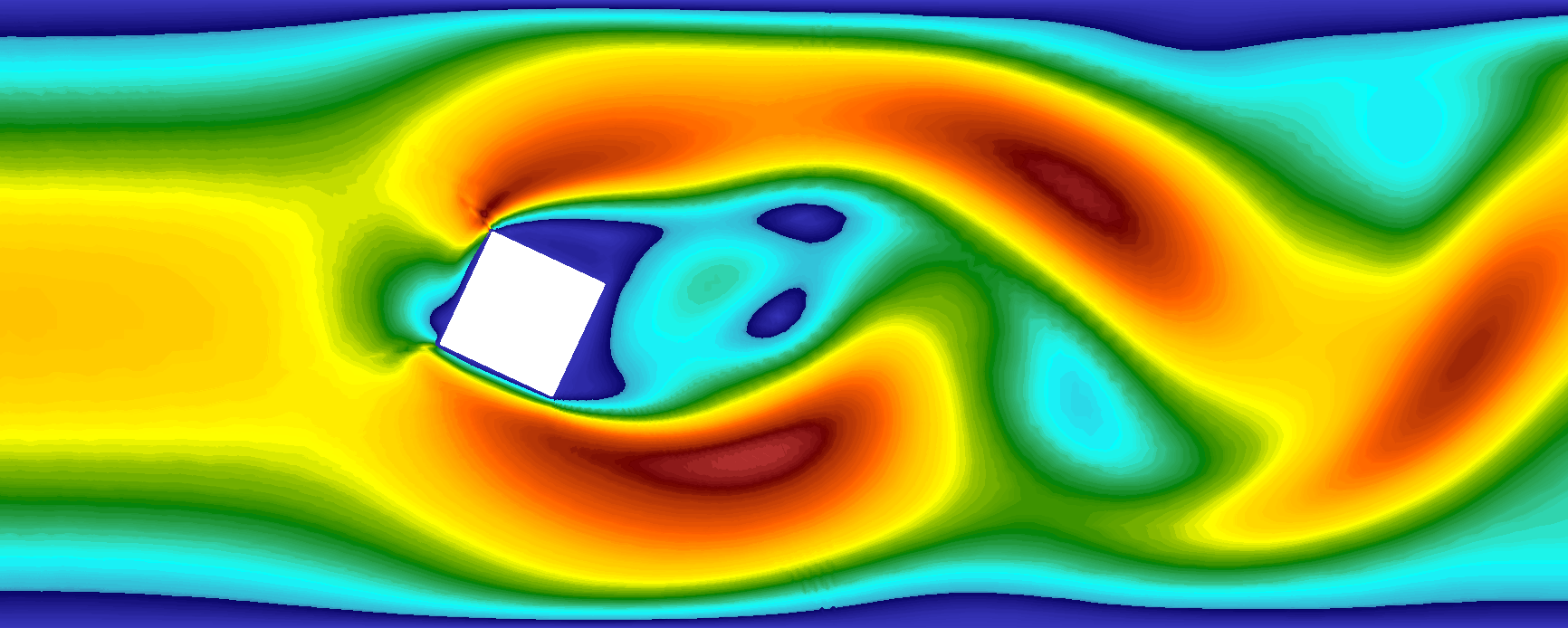}
    }
    \subfigure[Mesh, $\theta = 155^\circ$.]{
      \includegraphics[width=0.45\linewidth]{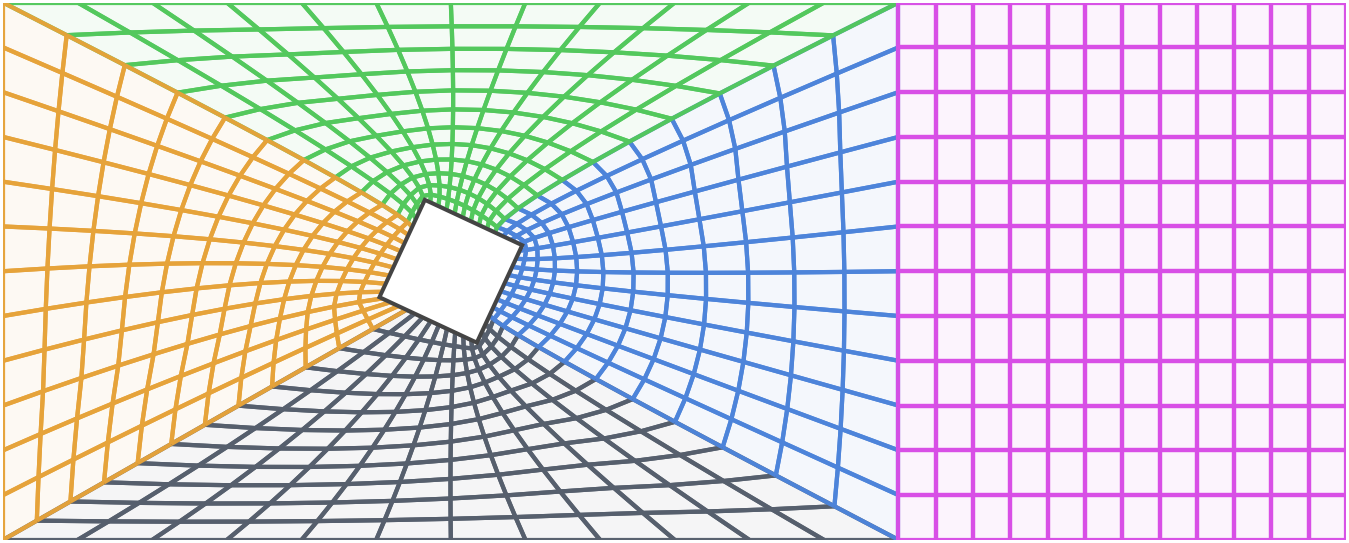}
    }\\
    \subfigure[Velocity, $\theta = 295^\circ$.]{
      \includegraphics[width=0.45\linewidth]{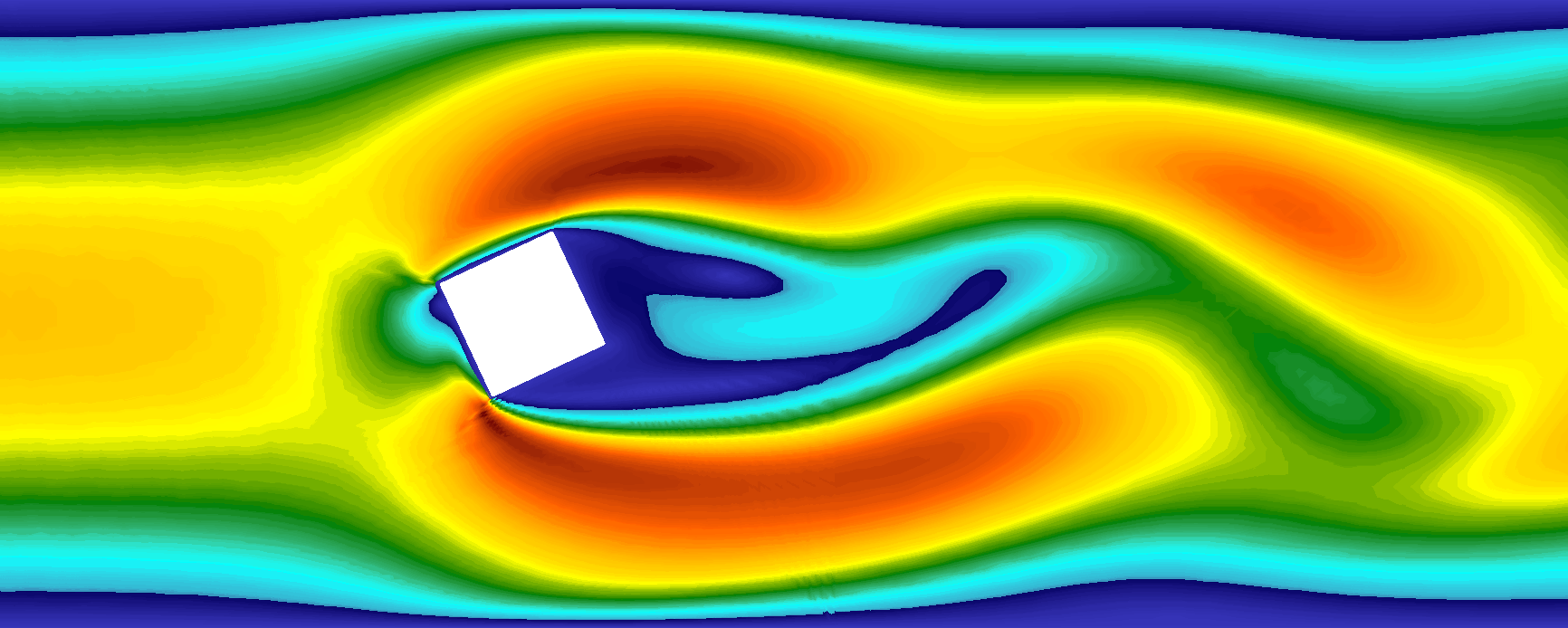}
    }
    \subfigure[Mesh, $\theta = 295^\circ$.]{
      \includegraphics[width=0.45\linewidth]{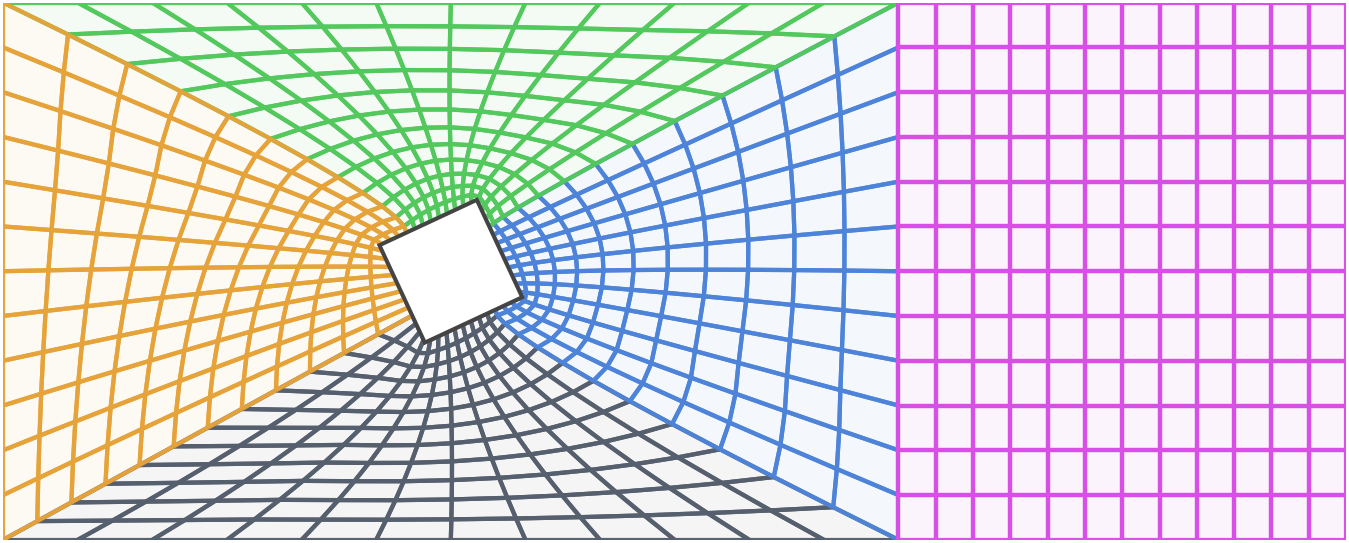}
    }\\
    \subfigure[Velocity, $\theta = 325^\circ$.]{
      \includegraphics[width=0.45\linewidth]{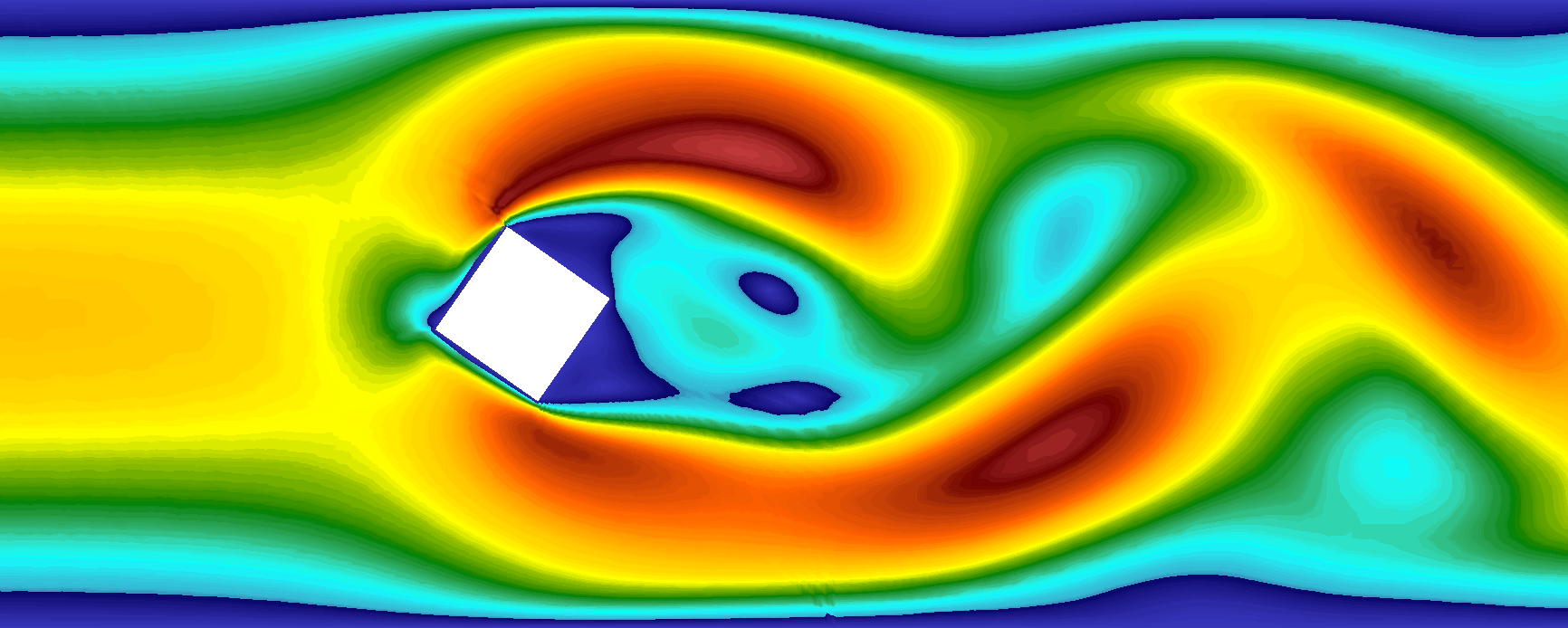}
    }
    \subfigure[Mesh, $\theta = 325^\circ$.]{
      \includegraphics[width=0.45\linewidth]{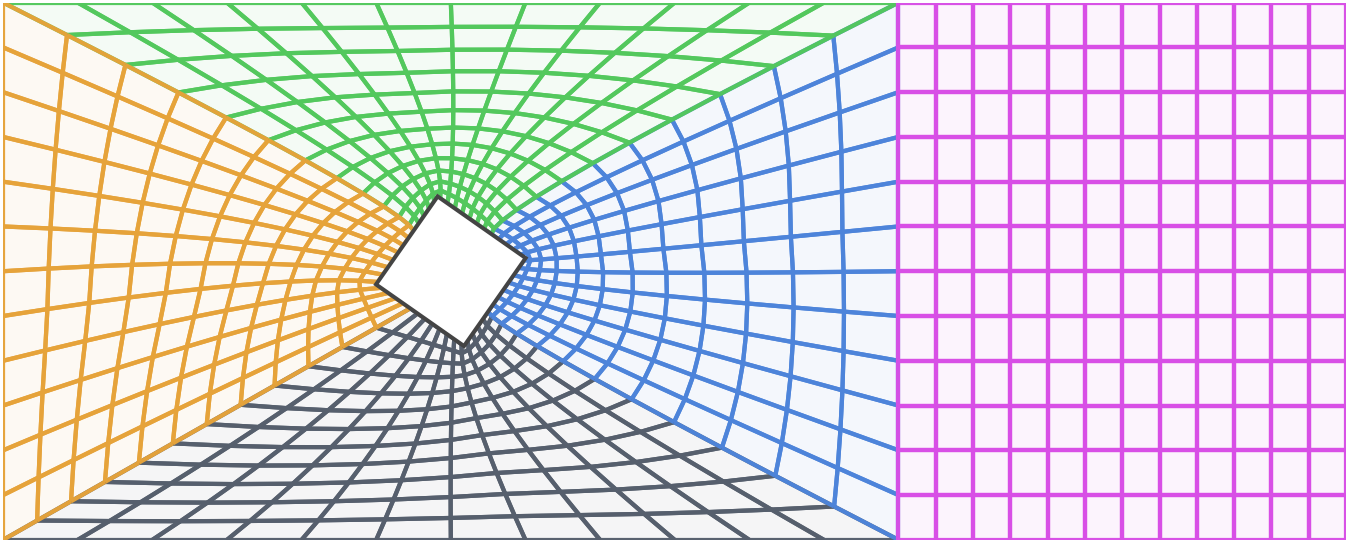}
    }
    \caption{Velocity field (left) and regenerated ALE mesh (right) at selected rotation angles. The two velocity columns share the common color bar shown at the top. The tangential-slip strategy sustains valid and well-shaped patches far beyond the rotation limit of the plain barrier-patch method.}
    \label{fig:rotating-square-snapshots}
\end{figure}

\paragraph{Validation}
\Cref{fig:graz-energy-momentum} compares the present solution against the reference kinetic-energy and momentum histories of Schalk et al.~\citep{mesh_regeneration_Graz}. \Cref{fig:graz-ek} shows the kinetic energy $E_k$ of the flow over a full revolution of the square; the two curves overlap closely throughout, including the transient during the first quarter-revolution and the subsequent periodic regime. \Cref{fig:graz-py} zooms into the vertical linear momentum $P_y$ over $\theta \in [45^\circ,160^\circ]$. Both curves show the same beat pattern, in which the oscillation amplitude grows and decays over successive cycles, while remaining in phase. \Cref{fig:graz-freq-table} reports the frequency $f=\dot{\theta}/(2\overline{\Delta\theta}_{\mathrm{sw}})$, where $\overline{\Delta\theta}_{\mathrm{sw}}$ is the mean angular interval between consecutive sign changes of $P_y$ on $\theta\in[45^\circ,180^\circ]$, the start-up transient of the first quarter-revolution being excluded from the average, and the peak magnitude $A=\max_{\theta\in[0^\circ,360^\circ]}|P_y(\theta)|$. The two frequencies agree to within $0.43\%$ and the amplitudes to within $0.94\%$; the frequency estimate varies by less than $0.01$ percentage points when the start of the averaging window is moved between $40^\circ$ and $60^\circ$. Quantitatively, the kinetic-energy histories of \Cref{fig:graz-ek} agree to $0.18\%$ in the mean and $0.25\%$ in relative $L^2$ over the full revolution, with a maximum pointwise deviation of $0.96\%$. The residual differences are consistent with the small phase and magnitude offsets visible in \Cref{fig:graz-py}.

\begin{figure}[H]
    \centering
  \noindent\makebox[\linewidth][c]{\pgfplotslegendfromname{grazsharedlegend}}
    \vspace{-2pt}

    \begin{minipage}[t]{0.58\linewidth}
        \centering
        \subfigure[Kinetic energy $E_k$ over one full revolution ($\theta \in {[0^\circ,360^\circ]}$).\label{fig:graz-ek}]{%
            \begin{minipage}[t]{\linewidth}
                \centering
                \begin{tikzpicture}
                \begin{axis}[
                  width=\linewidth,
                  height=0.34\textheight,
                  xlabel={Rotation angle $\theta$ [deg]},
                  ylabel={Kinetic energy $E_k$},
                  xlabel style={font=\normalsize, yshift=-3pt},
                  ylabel style={font=\normalsize, yshift=6pt},
                  tick label style={font=\small},
                  xmin=0, xmax=360,
                  xtick distance=90,
                  axis line style={line width=1.2pt, black!85},
                  tick style={line width=1.0pt, black!85},
                  major tick length=5pt,
                  minor tick length=3pt,
                  grid=both,
                  major grid style={line width=0.6pt, draw=gray!30},
                  minor grid style={line width=0.4pt, draw=gray!15, dashed},
                  minor tick num=1,
                  enlarge x limits=false,
                  enlarge y limits=0.05,
                  legend to name=grazsharedlegend,
                  legend style={
                    draw=black!50, line width=0.8pt,
                    fill=white, fill opacity=0.92, text opacity=1,
                    font=\scriptsize, row sep=2pt, inner sep=5pt, rounded corners=2pt
                  },
                  legend columns=1,
                  legend cell align={left},
                  legend image post style={line width=2.4pt}
                ]
                \addplot[color=blue!70!black, line width=2.4pt, each nth point=8]
                  table [header=false, col sep=tab, x expr=0.01*\thisrowno{0}, y index=1]
                  {TableEnergyMomentum.txt};
                \addlegendentry{Schalk et al. (Taylor--Hood \(Q_2/Q_1\), 11,136 quads, \(\sim\!1.00\times10^5\) est. mixed DoFs)}
                \addplot[color=orange!85!black, dash pattern=on 8pt off 4pt, line width=2.2pt, each nth point=8]
                  table [x=angle_deg, y=kinetic_energy, col sep=comma]
                  {original_gismo_energy_momentum.csv};
                \addlegendentry{Present (Taylor--Hood \(Q_4/Q_3\), 4864 elements, \(4.9543\times10^4\) mixed DoFs)}
                \end{axis}
                \end{tikzpicture}
            \end{minipage}%
        }
    \end{minipage}
    \hfill
    \begin{minipage}[t]{0.39\linewidth}
        \centering
        \subfigure[Zoom in $P_y$ for $\theta \in {[45^\circ,160^\circ]}$.\label{fig:graz-py}]{%
            \begin{minipage}[t]{\linewidth}
                \centering
                \begin{tikzpicture}
                \begin{axis}[
                  width=\linewidth,
                  height=0.18\textheight,
                  xlabel={Rotation angle $\theta$ [deg]},
                  ylabel={$P_y$},
                  xlabel style={font=\small, yshift=-2pt},
                  ylabel style={font=\small, yshift=4pt},
                  tick label style={font=\scriptsize},
                  xmin=45, xmax=160,
                  ymin=-10, ymax=10,
                  xtick distance=45,
                  axis line style={line width=1.0pt, black!85},
                  tick style={line width=0.8pt, black!85},
                  grid=both,
                  major grid style={line width=0.5pt, draw=gray!30},
                  minor grid style={line width=0.3pt, draw=gray!15, dashed},
                  enlarge x limits=false,
                ]
                \addplot[color=blue!70!black, line width=1.4pt, each nth point=4]
                  table [x=angle_deg, y=graz_py, col sep=comma]
                  {live_graz_comparison.csv};
                \addplot[color=orange!85!black, dash pattern=on 6pt off 3pt, line width=1.3pt, each nth point=4]
                  table [x=angle_deg, y=momentum_y, col sep=comma]
                  {original_gismo_energy_momentum.csv};
                \end{axis}
                \end{tikzpicture}
            \end{minipage}%
        }

        \vspace{8pt}
  {\small (c) Frequency and amplitude of $P_y$.\label{fig:graz-freq-table}}

  \vspace{2pt}
        \small
        
        \begin{tabular}{lSS}
        \toprule
        {} & {Frequency [Hz]} & {Peak $|P_y|$} \\
        \midrule
        Schalk et al.~\citep{mesh_regeneration_Graz} & 0.1468 & 9.815 \\
        Present & 0.1462 & 9.907 \\
        \midrule
        Relative diff. [\%] & 0.43 & 0.94 \\
        \bottomrule
        \end{tabular}
    \end{minipage}

      \caption{Validation against the reference solution of Schalk et al.~\citep{mesh_regeneration_Graz} for the rotating-square benchmark. (a) Kinetic energy $E_k$ of the flow over one full revolution. (b) Zoom in of vertical momentum $P_y$ over $\theta \in [45^\circ,160^\circ]$. (c) Corresponding frequency and peak magnitude of $P_y$.}
    \label{fig:graz-energy-momentum}
\end{figure}

\paragraph{Scaled-Jacobian history under sustained rotation}
We now return to the diagnostic deferred from \Cref{sec:mesh-quality} and evaluate $\mathcal{J}_{\min}$ of~\eqref{eq:jmin-monitor} on the fluid parameterization throughout a full revolution of the inner square. \Cref{fig:rotating-square-jmin-long} traces the resulting evolution against the rotation angle~$\theta$.

Two observations stand out. First, $\mathcal{J}_{\min}$ stays strictly bounded away from zero throughout the entire rotation, so no mesh degeneration is encountered. The minimum value $\mathcal{J}_{\min} = 0.6085$ is attained at $\theta = 94.5^\circ$, and the maximum $\mathcal{J}_{\min} = 0.7048$ at $\theta = 42.75^\circ$; both lie well above the invalidity threshold $\mathcal{J}_{\min} = 0$, confirming that the regeneration algorithm satisfies the bijectivity condition of~\eqref{eq:jmin-monitor}. Second, the trajectory exhibits a clear periodic pattern synchronized with the rotation, reflecting the $90^\circ$ rotational symmetry of the four-patch inner decomposition. The four quarter-revolution cycles are in fact indistinguishable to four significant digits, each attaining the same extrema $\mathcal{J}_{\min}=0.6085$ and $\mathcal{J}_{\min}=0.7048$, so mesh quality does not drift as the rotation accumulates. This is expected rather than fortuitous: each parameterization is rebuilt from the current square orientation alone, so configurations that differ by a multiple of $90^\circ$ produce the same parameterization up to the symmetry of the patch layout, irrespective of how many revolutions preceded them.

\pgfplotstableread[col sep=comma]{mesh_quality_0_360.csv}\msjTbl
\pgfplotstablecreatecol[create col/expr={
    \thisrow{angle_deg} < 60 ? 99 : \thisrow{minScaledJacDet}
}]{msj_ex1}\msjTbl
\pgfplotstablesort[sort key={msj_ex1}, sort cmp={float <}]
    \msjTblAsc{\msjTbl}
\pgfplotstablegetelem{0}{angle_deg}\of\msjTblAsc            \let\minAngle\pgfplotsretval
\pgfplotstablegetelem{0}{minScaledJacDet}\of\msjTblAsc  \let\minJ\pgfplotsretval
\pgfplotstablesort[sort key={minScaledJacDet}, sort cmp={float >}]
    \msjTblDesc{\msjTbl}
\pgfplotstablegetelem{0}{angle_deg}\of\msjTblDesc           \let\maxAngle\pgfplotsretval
\pgfplotstablegetelem{0}{minScaledJacDet}\of\msjTblDesc \let\maxJ\pgfplotsretval

\begin{figure}[H]
    \centering
    \begin{tikzpicture}
        \definecolor{cP2}{RGB}{0,90,156}
        \definecolor{cP2light}{RGB}{173,206,232}
        \definecolor{cMin}{RGB}{192,57,43}
        \definecolor{cMax}{RGB}{39,174,96}
        \begin{axis}[
          width=0.85\linewidth,
          height=0.38\textheight,
          xlabel={Rotation angle $\theta$ [deg]},
          ylabel={$\mathcal{J}_{\min}$},
          xlabel style={font=\normalsize\bfseries, yshift=-4pt},
          ylabel style={font=\normalsize\bfseries, yshift=8pt},
          tick label style={font=\small},
          xmin=0, xmax=360,
          ymin=0.58, ymax=0.72,
          xtick distance=90,
          ytick distance=0.02,
          axis line style={line width=1.6pt, black},
          tick style={line width=1.3pt, black},
          major tick length=6pt,
          minor tick length=3pt,
          grid=both,
          major grid style={line width=0.7pt, draw=gray!35, opacity=0.6},
          minor grid style={line width=0.4pt, draw=gray!20, opacity=0.5, dashed},
          minor tick num=1,
          enlarge x limits=false,
          enlarge y limits=false,
          clip=false,
        ]
        \addplot[
          draw=none, name path=curve,
        ] table [x=angle_deg, y=minScaledJacDet, col sep=comma]
          {mesh_quality_0_360.csv};

        \path[name path=top] (axis cs:0,0.72) -- (axis cs:360,0.72);

        \addplot[cP2light, opacity=0.3] fill between[of=curve and top];

        \addplot[
          cP2, line width=2.2pt,
          mark=*, mark size=1.8pt,
          mark options={fill=cP2, draw=white, line width=0.6pt},
          mark repeat=2,
        ] table [x=angle_deg, y=minScaledJacDet, col sep=comma]
          {mesh_quality_0_360.csv};
        
        \node[circle, draw=cMin, line width=1.8pt, minimum size=12pt,
              inner sep=0pt, fill=white, fill opacity=0.3] 
              at (axis cs:\minAngle,\minJ) {};
        \node[circle, fill=cMin, minimum size=4pt, inner sep=0pt] 
              at (axis cs:\minAngle,\minJ) {};
        \node[font=\footnotesize, anchor=west, align=left,
              fill=white, fill opacity=0.95, text opacity=1,
              draw=cMin, line width=1.0pt, 
              inner sep=5pt, rounded corners=2.5pt,
              drop shadow={shadow xshift=1pt, shadow yshift=-1pt, opacity=0.2}]
              at ($(axis cs:\minAngle,\minJ)+(12pt,18pt)$)
              {\textbf{Global Min} \\[1pt]
               $\mathcal{J}_{\min} = \pgfmathprintnumber[fixed,precision=4]{\minJ}$ \\
               $\theta = \pgfmathprintnumber[fixed,precision=1]{\minAngle}^\circ$};
        
        \node[circle, draw=cMax, line width=1.8pt, minimum size=12pt,
              inner sep=0pt, fill=white, fill opacity=0.3] 
              at (axis cs:\maxAngle,\maxJ) {};
        \node[circle, fill=cMax, minimum size=4pt, inner sep=0pt] 
              at (axis cs:\maxAngle,\maxJ) {};
        \node[font=\footnotesize, anchor=west, align=left,
              fill=white, fill opacity=0.95, text opacity=1,
              draw=cMax, line width=1.0pt,
              inner sep=5pt, rounded corners=2.5pt,
              drop shadow={shadow xshift=1pt, shadow yshift=-1pt, opacity=0.2}]
              at ($(axis cs:\maxAngle,\maxJ)+(12pt,12pt)$)
              {\textbf{Global Max} \\[1pt]
               $\mathcal{J}_{\min} = \pgfmathprintnumber[fixed,precision=4]{\maxJ}$ \\
               $\theta = \pgfmathprintnumber[fixed,precision=1]{\maxAngle}^\circ$};
        
        \addplot[
          dashed, line width=0.9pt, gray!70,
          domain=0:360, samples=2,
        ] {0.6};
        
        \end{axis}
    \end{tikzpicture}
    \caption{Minimum scaled Jacobian $\mathcal{J}_{\min}$ of the ALE fluid parameterization over one full revolution of the inner square. The red and green circles mark the global minimum and maximum, respectively.}
    \label{fig:rotating-square-jmin-long}
\end{figure}

\paragraph{Cost of tangential slip}
A practical question is whether rebuilding the spline parameterization at every time step introduces a significant computational overhead. To quantify this cost, we measure the cumulative mesh-only regeneration time over 200 evenly spaced configurations across eight successive uniform refinement levels ($r = 0, 1, \ldots, 7$) and for polynomial degrees $p = 2$ and $p = 3$. For each refinement level, the $p=2$ and $p=3$ cases use the same multi-patch layout and the same element topology. Only the spline degree of the parameterization is changed. The reported time includes tangential slip, Barrier Patch, degree elevation, uniform refinement, and topology construction. A higher spline degree increases both the number of quadrature points and the number of active basis functions involved in each local evaluation, even when the number of elements is unchanged.

\Cref{fig:remesh-time} summarizes the results. For coarse to moderately refined meshes ($r \leq 4$), the cumulative mesh-only cost remains between $1.92\,\mathrm{s}$ and $2.41\,\mathrm{s}$ for $p=2$, and between $1.96\,\mathrm{s}$ and $2.43\,\mathrm{s}$ for $p=3$. Beyond this range, the cost grows gradually: at $r=6$, it reaches $5.38\,\mathrm{s}$ for $p=2$ and $6.27\,\mathrm{s}$ for $p=3$, and at the finest level $r=7$, it rises to $14.95\,\mathrm{s}$ and $21.20\,\mathrm{s}$, respectively.

The levels $r$ in \Cref{fig:remesh-time} are obtained by uniformly refining all patches of the layout and therefore form a scaling study rather than a sequence of production runs. The comparison reported above in \Cref{fig:graz-energy-momentum} instead uses a directionally refined configuration. After four uniform refinements, the four near-body patches receive one additional refinement in the obstacle-normal direction, while no additional tangential refinement is applied. The base discretization comprises one element per near-body patch and eleven elements in the outflow patch, fifteen in total, which is the $r=0$ entry of \Cref{fig:remesh-time}. Consequently, each near-body patch contains $2^4\times2^5$ elements and the outflow patch contains $11\times2^4\times2^4$ elements. The total number of elements is therefore
\[N=4\left(2^4\times2^5\right)+11\left(2^4\times2^4\right)=2048+2816=4864.\]
This configuration lies between the uniformly refined levels $r=4$ and $r=5$ of the sweep. The two finest levels are included primarily as stress tests of the regeneration algorithm: in the IGA setting, the high-order and high-continuity B-spline/NURBS basis provides a higher approximation capability per element than low-order $C^0$ finite elements, so resolutions of this order are well beyond what the present benchmark requires. Across all levels considered, the mesh-only regeneration cost per configuration stays at or below approximately $0.11\,\mathrm{s}$, so the parameterization procedure remains computationally affordable relative to the full flow and structural solves.

\begin{figure}[H]
    \centering
    \begin{tikzpicture}
    \begin{axis}[
                width=0.95\linewidth, height=7.5cm,
                ybar=1pt,
        bar width=16pt,
                enlarge x limits=0.06,
                ymin=0, ymax=24,
        xtick=data,
                xticklabels={%
            {$0$ (15)},{$1$ (60)},{$2$ (240)},{$3$ (960)},%
            {$4$ (3840)},{$5$ (15360)},{$6$ (61440)},{$7$ (245760)}},
                xticklabel style={font=\small},
                xlabel={Refinement level $r$ (number of elements)},
                ylabel={Cumulative mesh-only regeneration time [s]},
                legend style={at={(0.02,0.98)}, anchor=north west,
                                            draw=black, fill=white},
        xticklabel style={font=\small},
        ymajorgrids,
        grid style={dashed, gray!40, line width=0.4pt},
        axis y line*=left, axis x line*=bottom,
        nodes near coords,
                nodes near coords style={font=\tiny,
            /pgf/number format/fixed,
            /pgf/number format/precision=2},
    ]
    \addplot+[draw=black, line width=0.4pt,
              fill={rgb,255:red,72;green,120;blue,207}]
                coordinates {
                    (0,2.41281) (1,1.91696) (2,2.34654) (3,2.17957)
                    (4,2.15064) (5,2.70668) (6,5.38437) (7,14.95370)};
        \addlegendentry{$p = 2$}

        \addplot+[draw=black, line width=0.4pt,
                            fill={rgb,255:red,225;green,129;blue,44}]
                coordinates {
                    (0,2.07625) (1,1.95635) (2,2.01302) (3,2.17374)
                    (4,2.43297) (5,4.17219) (6,6.26748) (7,21.20120)};
        \addlegendentry{$p = 3$}

\draw[dashed, red, line width=0.6pt]
    (axis cs:-0.5,2) -- (axis cs:7.5,2);

\node[
        font=\scriptsize,
        red,
    anchor=south east,
    fill=white,
    inner sep=1.5pt
] at (axis cs:7.35,2) {$2\,\mathrm{s}$};

    \end{axis}
    \end{tikzpicture}
        \caption{Cumulative mesh-only regeneration time over 200 evenly spaced configurations of the final rotating-square setup, for polynomial degrees $p = 2$ and $p = 3$ at successive uniform refinement levels $r = 0, \ldots, 7$. Barrier Patch and tangential slip are enabled; the fluid solve is excluded. The dashed red line marks $2\,\mathrm{s}$ and is drawn for reference only.}
    \label{fig:remesh-time}
\end{figure}

\begin{table}[H]
\centering
\footnotesize
\setlength{\tabcolsep}{5pt}
\caption{Cost breakdown of one parameterization-driven ALE time step.
    Measured times are per-step wall-clock averages over 200 time steps
    on the production mesh (4864 elements, $\Delta t = 0.005\,\mathrm{s}$)
    with 8 OpenMP threads, for polynomial degrees $p=2$ and $p=3$.
    The barrier-patch optimization runs on the pre-refinement single-patch
    geometry, so its cost is essentially independent of $r$.}
\label{tab:algorithm-cost-breakdown}
\begin{tabular}{@{}llSSp{4.8cm}@{}}
\toprule
Step in Algorithm~\ref{alg:timestep} & Relative cost
    & \multicolumn{2}{c}{Measured (s/step)} & Comment \\
\cmidrule(lr){3-4}
 & & {$p=2$} & {$p=3$} & \\
\midrule
Tangential reparameterization (line~\ref{algline:tangslip})
    & negligible & {$<10^{-4}$} & {$<10^{-4}$}
    & one-dimensional curve shift \\
Initial ruled/transfinite construction (line~\ref{algline:ruled})
    & negligible & {$<10^{-4}$} & {$<10^{-4}$}
    & explicit geometric construction \\
Barrier-patch optimization (line~\ref{algline:barrier})
    & small & 0.0075 & 0.0087
    & nonlinear optimization with quadrature evaluations \\
Quasi-interpolation transfer (line~\ref{algline:transfer})
    & dominant geometry cost & 0.018  & 0.020
    & local operator, linear in degrees of freedom \\
Mesh-velocity evaluation (line~\ref{algline:meshvel})
    & negligible & {$1.1\times10^{-4}$} & {$1.2\times10^{-4}$}
    & control-point differences \\
Fluid solve (line~\ref{algline:fluid})
    & dominant total cost & 1.57 & 4.04
    & Navier--Stokes solve (Picard iterations) \\
\bottomrule
\end{tabular}
\end{table}

The measurements confirm the qualitative picture. On the production mesh with 4864 elements, the barrier-patch optimization takes approximately $7.5\,\mathrm{ms}$ and $8.7\,\mathrm{ms}$ per time step for $p=2$ and $p=3$, respectively, while the tangential reparameterization remains below $0.1\,\mathrm{ms}$ and the mesh-velocity evaluation costs only about $0.11$–$0.12\,\mathrm{ms}$. The barrier-patch cost remains relatively insensitive to mesh refinement, since the optimization is performed on the pre-refinement single-patch geometry rather than on the refined computational mesh. The quasi-interpolation transfer requires approximately $18\,\mathrm{ms}$ for $p=2$ and $20\,\mathrm{ms}$ for $p=3$, making it the dominant contribution to the geometry-side cost. Nevertheless, this overhead remains small compared with the fluid solve, which requires approximately $1.57\,\mathrm{s}$ and $4.04\,\mathrm{s}$ per time step for $p=2$ and $p=3$, respectively. Thus, the additional cost introduced by the parameterization-driven ALE procedure constitutes only a small fraction of the overall computational cost.

\paragraph{Portability to a finite element solver}
The proposed G+Smo library is stand-alone: it constructs the spline geometry of the physical domain independently of any field solver. Through preCICE, this geometry is passed to \textsc{deal.II}~\citep{dealII95}; after receiving the spline representation, \textsc{deal.II} carries out an independent finite-element flow calculation. The \textsc{deal.II} branch therefore serves only as an interoperability test, not as a component or dependency of the geometry library. This workflow is illustrated in \Cref{fig:fem-export-workflow}.

\begin{figure}[H]
    \centering
    \begin{tikzpicture}[>=Stealth, font=\small,
        box base/.style={draw, line width=0.9pt, rounded corners=3pt,
                         minimum width=2.8cm, minimum height=1.35cm,
                         align=center, inner sep=5pt, font=\small},
        box gismo/.style={box base, draw=cGismo, fill=cGismoBg,
                          text=cGismo!65!black},
        box deal/.style={box base, draw=cDeal, fill=cDealBg,
                         text=cDeal!65!black},
        box remesh/.style={box base, draw=cRemesh, fill=cRemeshBg,
                           text=cRemesh!60!black},
        data arrow/.style={->, >=Stealth, line width=1.1pt},
        force arrow/.style={data arrow, draw=cDeal},
        disp arrow/.style={data arrow, draw=cGismo},
        write arrow/.style={data arrow, draw=cGismo},
        read arrow/.style={data arrow, draw=cDeal},
        ctrl arrow/.style={->, >=Stealth, line width=0.9pt, draw=cRemesh,
                           dash pattern=on 4pt off 2pt},
        col title/.style={font=\small\bfseries, black!85},
        data label/.style={font=\scriptsize, fill=white,
                           inner sep=2pt, rounded corners=1pt},
        side label/.style={font=\scriptsize\itshape, black!65},
        precice label/.style={font=\scriptsize\bfseries, text=black!55,
                              inner sep=3pt}]

      \definecolor{cGismo}{RGB}{230,97,1}
      \definecolor{cGismoBg}{RGB}{253,236,217}
      \definecolor{cDeal}{RGB}{31,119,180}
      \definecolor{cDealBg}{RGB}{222,235,247}
      \definecolor{cRemesh}{RGB}{44,160,44}
      \definecolor{cRemeshBg}{RGB}{233,245,233}
      \definecolor{cPrecice}{RGB}{148,103,189}

      \def\colL{0}
      \def\colR{6.8}

      \node[col title] at (\colL, 3.3) {Geometry library (G+Smo)};
      \node[col title] at (\colR, 3.3) {Optional consumer (\textsc{deal.II})};
      \draw[black!25, line width=0.5pt] (\colL-1.7, 2.95) -- (\colL+1.7, 2.95);
      \draw[black!25, line width=0.5pt] (\colR-1.7, 2.95) -- (\colR+1.7, 2.95);

      \node[box gismo] (strMesh) at (\colL, 1.8)
        {Spline geometry};
      \node[box deal] (cfdL) at (\colL, -0.4)
        {Library API};
      \node[box deal] (cfdR) at (\colR, -0.4)
        {FEM consumer};
      \node[box remesh] (remesh) at (\colR, -2.8)
        {G+Smo geometry};

      \begin{scope}[on background layer]
        \node[draw=cPrecice!70, line width=0.7pt,
              dash pattern=on 4pt off 2pt,
              rounded corners=5pt, inner sep=10pt,
              fill=cPrecice!5,
              fit=(cfdL)(cfdR)] (preciceBox) {};
      \end{scope}
      \node[precice label, text=cPrecice!70!black]
        at (preciceBox.north) [above=5pt] {preCICE};

      \draw[write arrow] ([xshift=-10pt]strMesh.south)
        -- ([xshift=-10pt]cfdL.north);

      \draw[read arrow] ([xshift=10pt]cfdL.north)
        -- ([xshift=10pt]strMesh.south);

      \node[
          side label,
          text=cGismo!70!black,
          align=center,
          anchor=east
      ] at ($(strMesh.south)!0.62!(cfdL.north)+(-0.35,0)$)
          {write\\spline};

      \node[
          side label,
          text=cDeal!70!black,
          align=center,
          anchor=west
      ] at ($(strMesh.south)!0.50!(cfdL.north)+(0.35,0)$)
          {read\\spline};

      \draw[force arrow] ([yshift=4pt]cfdR.west)
        -- node[data label, above=1pt]{Optional request}
           ([yshift=4pt]cfdL.east);
      \draw[disp arrow] ([yshift=-4pt]cfdL.east)
        -- node[data label, below=1pt]{Geometry output}
           ([yshift=-4pt]cfdR.west);

      \draw[ctrl arrow] (remesh.north)
        -- node[data label, right=2pt, text=cRemesh!60!black]
               {provide $\mathcal G^n$}
           (cfdR.south);
      \draw[ctrl arrow] (remesh.west) -|
        node[data label, below=1pt, pos=0.22, text=cRemesh!60!black]
             {regenerate $\mathcal G^n$}
        ([xshift=-6pt]cfdL.south);
    \end{tikzpicture}

    \caption{Stand-alone geometry library and optional finite-element consumer. Through preCICE, G+Smo can operate as an independent geometry library and communicate its spline geometry to a downstream finite-element solver, which constructs its own computational mesh and discrete spaces.}
    \label{fig:fem-export-workflow}
\end{figure}

\begin{figure}[H]
    \centering
    \begin{tikzpicture}
        \definecolor{nbBlue}{RGB}{31,119,180}   
        \definecolor{nbOrange}{RGB}{230,97,1}   
        \begin{axis}[
          width=0.82\linewidth,
          height=0.36\textheight,
          xlabel={Rotation angle $\theta$ [deg]},
          ylabel={$P_y$},
          xlabel style={font=\normalsize, yshift=-3pt},
          ylabel style={font=\normalsize, yshift=6pt},
          tick label style={font=\small},
          xmin=45,
          xmax=160,
          axis line style={line width=1.2pt, black!85},
          tick style={line width=1.0pt, black!85},
          major tick length=5pt,
          minor tick length=3pt,
          grid=both,
          major grid style={line width=0.6pt, draw=gray!30},
          minor grid style={line width=0.4pt, draw=gray!15, dashed},
          minor tick num=1,
          enlarge x limits=false,
          enlarge y limits=0.05,
          legend style={
            at={(0.03,0.97)},
            anchor=north west,
            draw=black!50,
            line width=0.8pt,
            fill=white,
            fill opacity=0.92,
            text opacity=1,
            font=\small,
            row sep=3pt,
            inner sep=7pt,
            rounded corners=2pt
          },
          legend cell align={left},
          legend image post style={line width=2.4pt}
        ]
        \addplot[
          color=nbOrange,
          line width=3.2pt,
          each nth point=4
        ] table [x=angle_deg, y=momentum_y, col sep=comma]
          {original_gismo_energy_momentum.csv};
        \addlegendentry{G+Smo (IGA)}

        \addplot[
          color=nbBlue,
          dash pattern=on 10pt off 5pt,
          line width=3.0pt,
          each nth point=4
        ] table [x=angle_deg, y=Py, col sep=comma]
          {dealii_Py_30_160deg.csv};
        \addlegendentry{\textsc{deal.II} (FEM)}
        \end{axis}
    \end{tikzpicture}
    \caption{Comparison of the vertical linear momentum $P_y$ for flow past the rotating square over $\theta\in[45^\circ,160^\circ]$, computed with G+Smo (IGA) and \textsc{deal.II} (FEM) on the same sequence of physical geometries. The frequencies agree to $0.51\%$ and the peak magnitudes to $2.5\%$, with a phase offset accumulating to $0.54^\circ$ over the interval. The agreement confirms the portability of the stand-alone G+Smo geometry library to an independent FEM solver.}
    \label{fig:gismo-dealii-py}
\end{figure}

For the optional portability test, \Cref{fig:gismo-dealii-py} compares the vertical linear-momentum histories obtained with G+Smo (IGA) and the downstream \textsc{deal.II} consumer (FEM) on the same sequence of physical geometries over $\theta\in[45^\circ,160^\circ]$, matching the angular interval used for the comparison with Schalk et al. in \Cref{fig:graz-py}. On this interval the two computations agree to $0.51\%$ in the frequency of $P_y$ and to $2.5\%$ in its peak magnitude, with a phase offset that grows monotonically to $0.54^\circ$ of rotation, roughly one twelfth of a period, by the end of the interval; the pointwise difference is dominated by this drift rather than by a discrepancy in amplitude. A slow relative phase drift is expected here, since the two solvers share only the geometry and differ in basis, quadrature and linear algebra. Combining this with the comparison against \Cref{fig:graz-py}, and averaging all three over the common interval $\theta\in[45^\circ,160^\circ]$ covered by the \textsc{deal.II} run---hence over a shorter window than the one used in \Cref{fig:graz-freq-table}---the frequency of $P_y$ is $0.1472\,\mathrm{Hz}$ for Schalk et al.~\citep{mesh_regeneration_Graz}, $0.1472\,\mathrm{Hz}$ for \textsc{deal.II} and $0.1465\,\mathrm{Hz}$ for the present isogeometric solver, a spread of $0.51\%$ across three mutually independent flow discretizations driven by the same sequence of regenerated geometries. This confirms that the geometry produced by the stand-alone library is portable across IGA and FEM solvers. We emphasize that \textsc{deal.II} is an optional validation consumer, not a dependency of the proposed library.

\section{Application to 3D rotating-rotor benchmark}
\label{sec:3d-rotor}

To demonstrate that the proposed method extends naturally to three dimensions, we apply the parameterization-driven strategy to axial flow through a rotating rotor enclosed in a cylindrical casing.

\paragraph{Problem setup}
The physical domain, shown in \Cref{fig:rotor-setup}, is the annular region between a fixed cylindrical outer wall of radius $R = 1.5\,\mathrm{m}$ and length $L = 6\,\mathrm{m}$ and a four-bladed rotor centered on the cylinder axis. The rotor cross-section has four-fold symmetry with a bounding diameter of $2\,\mathrm{m}$, and its profile is twisted by $100^\circ$ along the axial ($z$-)direction, producing a helical geometry representative of axial-flow machinery.

\begin{figure}[H]
    \centering
    \includegraphics[width=0.3\linewidth]{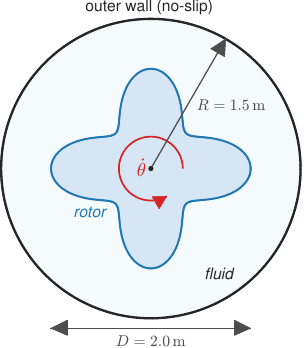}
    \qquad
    \includegraphics[width=0.55\linewidth]{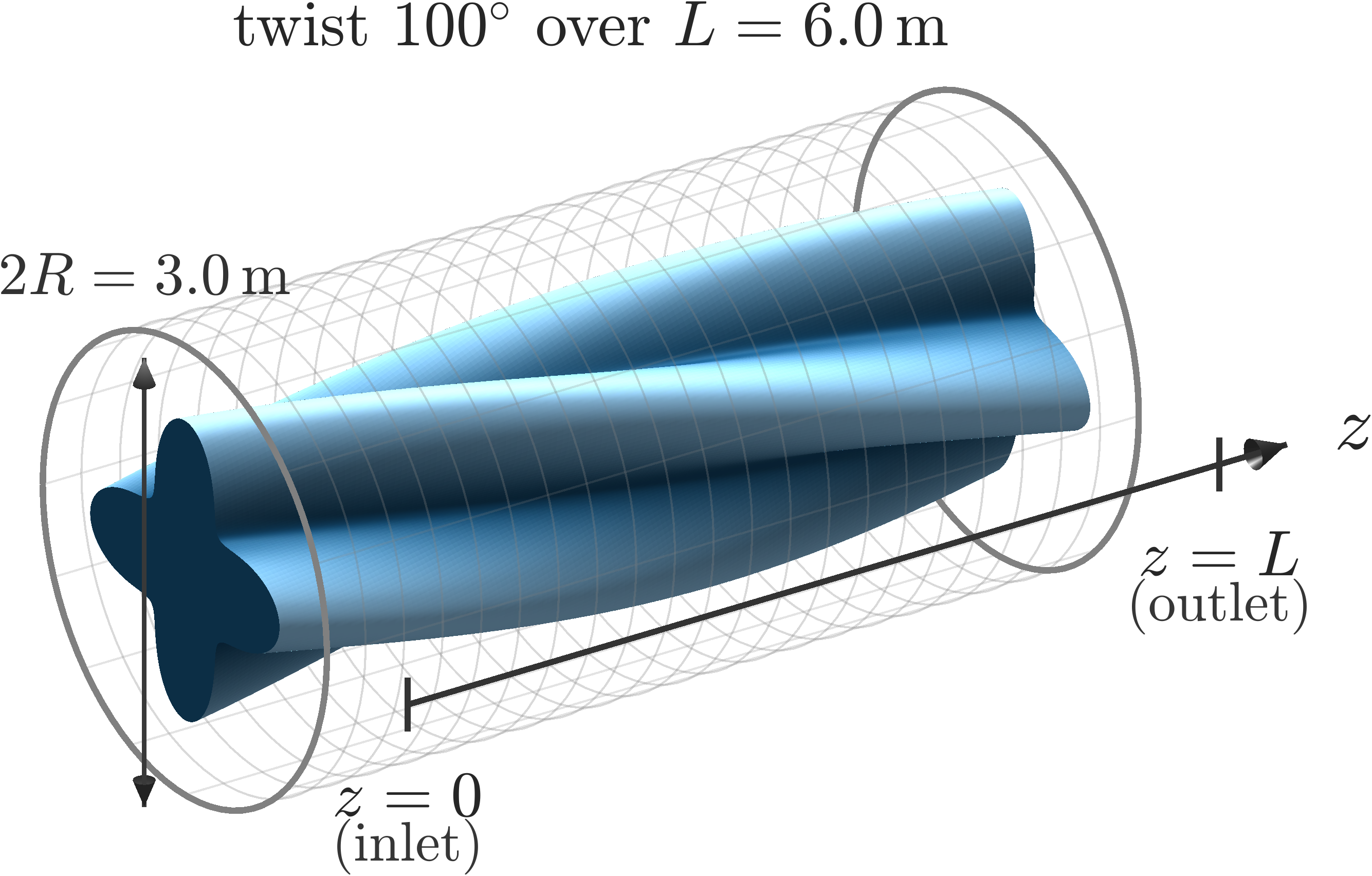}
    \caption{Setup of the 3D rotating-rotor benchmark. (a)~Cross-section through the cylindrical casing showing the four-bladed rotor profile and the annular fluid region. (b)~Longitudinal section along the $z$-axis: the rotor blade envelope is twisted by $100^\circ$ between inlet and outlet, and uniform axial inflow/stress-free outflow conditions are imposed at the two ends.}
\label{fig:rotor-setup}
\end{figure}

A uniform axial inflow $v_{\mathrm{in}} = 20\,\mathrm{m/s}$ is prescribed at $z = 0$, no-slip conditions are imposed on the cylindrical wall, and a stress-free condition is applied at the outlet $z = L$. On the rotor surface, the velocity is set by rigid-body rotation about the $z$-axis,
\begin{equation}
    \mathbf{u}^f\big|_{\Gamma_{\mathrm{rotor}}} = \dot{\theta}\,\mathbf{e}_z \times (\mathbf{x} - \mathbf{x}_c),
\end{equation}
with angular velocity $\dot{\theta} = 30^\circ/\mathrm{s}$. The kinematic viscosity is $\nu^f = 0.6\,\mathrm{m^2/s}$, giving a Reynolds number $\mathrm{Re} = v_{\mathrm{in}} L / \nu^f = 200$. Backward Euler time integration is used with a constant step $\Delta t = 0.05\,\mathrm{s}$ over the interval $[0, 5]\,\mathrm{s}$.

As in the two-dimensional cases, the fluid domain is decomposed into four ring-shaped patches, and the barrier-patch parameterization is reconstructed at each time step to update the volumetric mesh to the current rotor configuration.

\paragraph{Results}
\Cref{fig:rotating-rotor-3d} shows representative snapshots at $t = 2.5\,\mathrm{s}$ (after a $75^\circ$ rotation) and $t = 5.0\,\mathrm{s}$ (after $150^\circ$). At both instants, the four volumetric patches conform to the helical rotor surface (panels~a,\,b), and the velocity field exhibits the expected acceleration through the blade-tip passages (panels~c,\,d). Two complementary cross-sectional views are included: a vertical cut through the cylinder axis (panels~e,\,f), which reveals the annular velocity distribution around the four-lobed rotor profile, and a constant-$z$ cut near the inlet (panels~g,\,h), where the four-fold symmetric pattern is seen to rotate with the rotor.

\begin{figure}[H]
    \centering

    \includegraphics[width=0.8\linewidth]{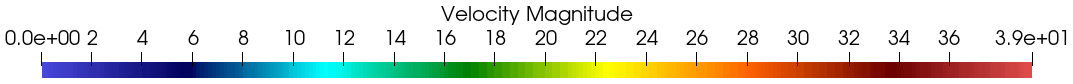}\\

    \vspace{0.4em}

    \subfigure[Parameterization]{
        \includegraphics[width=0.23\linewidth]{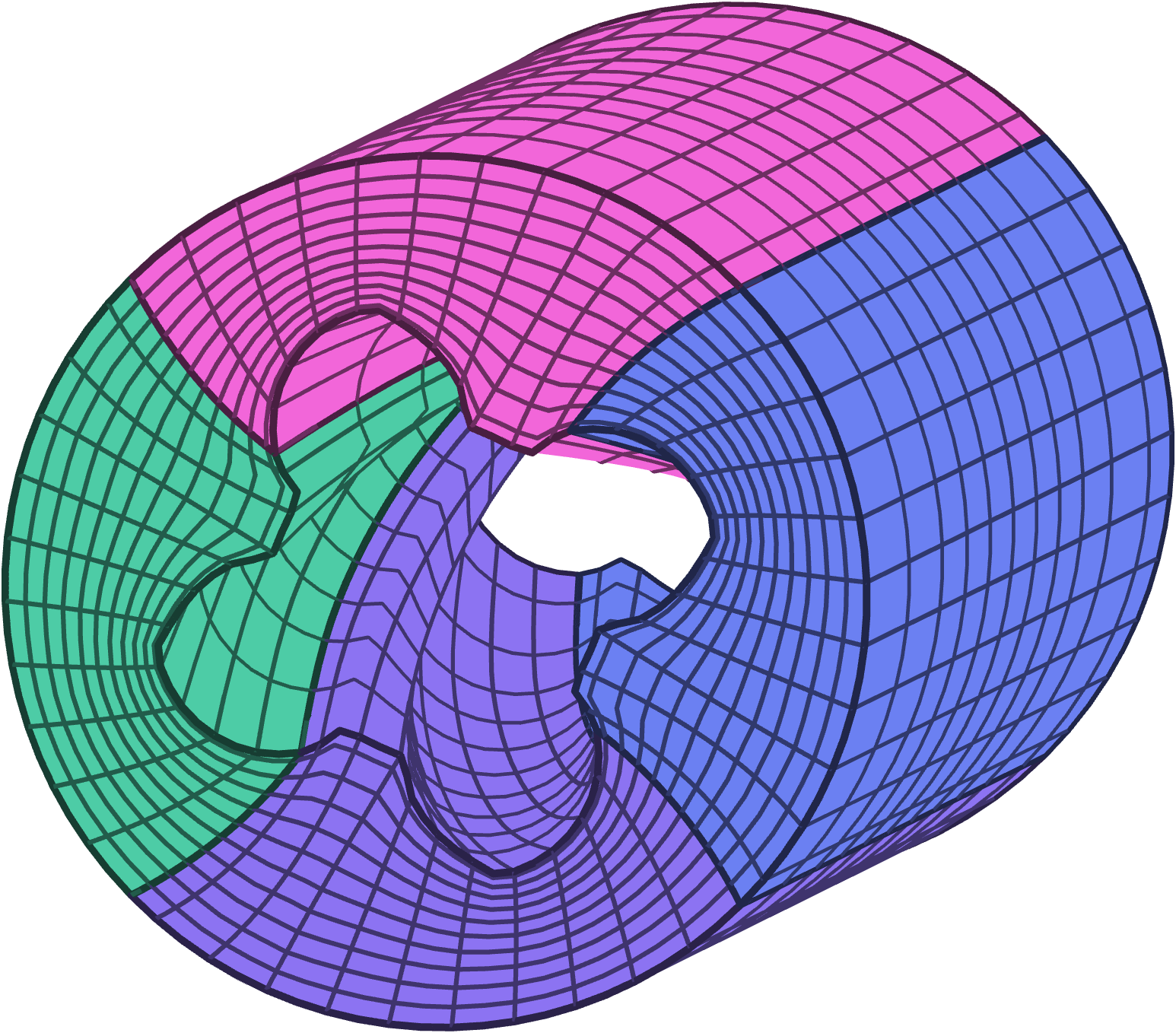}}
    \hfill
    \subfigure[Velocity magnitude]{
        \includegraphics[width=0.23\linewidth]{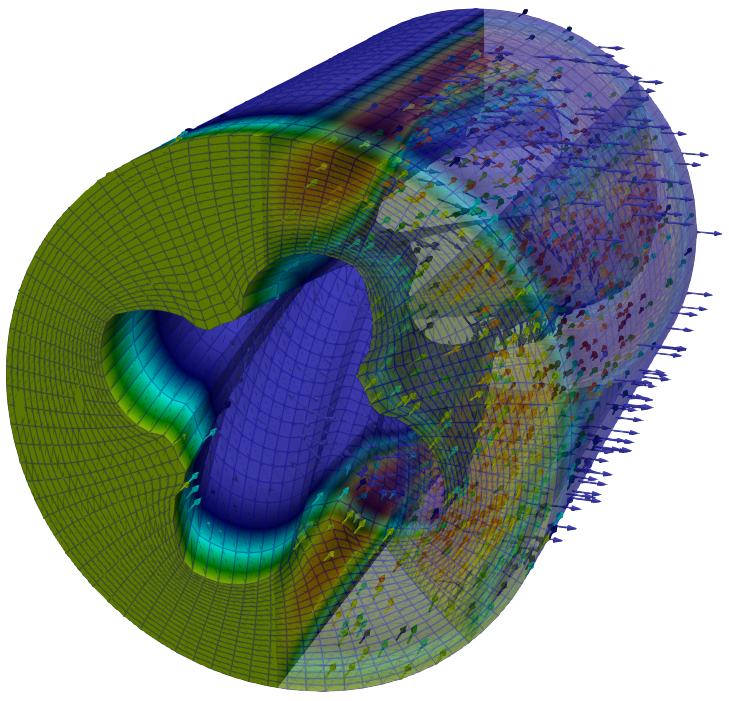}}
    \hfill
    \subfigure[Vertical cut]{
        \includegraphics[width=0.23\linewidth]{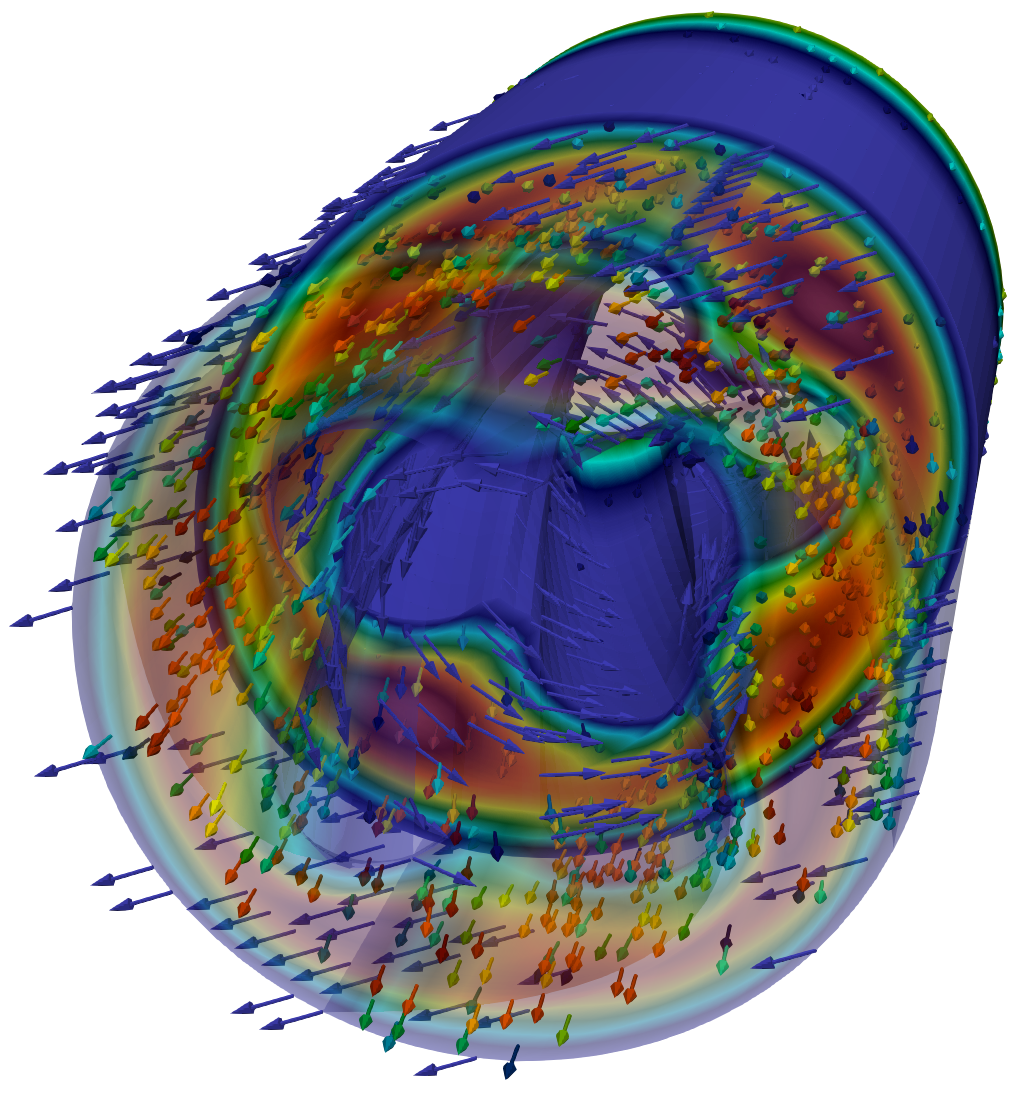}}
    \hfill
    \subfigure[Horizontal cut]{
        \includegraphics[width=0.23\linewidth]{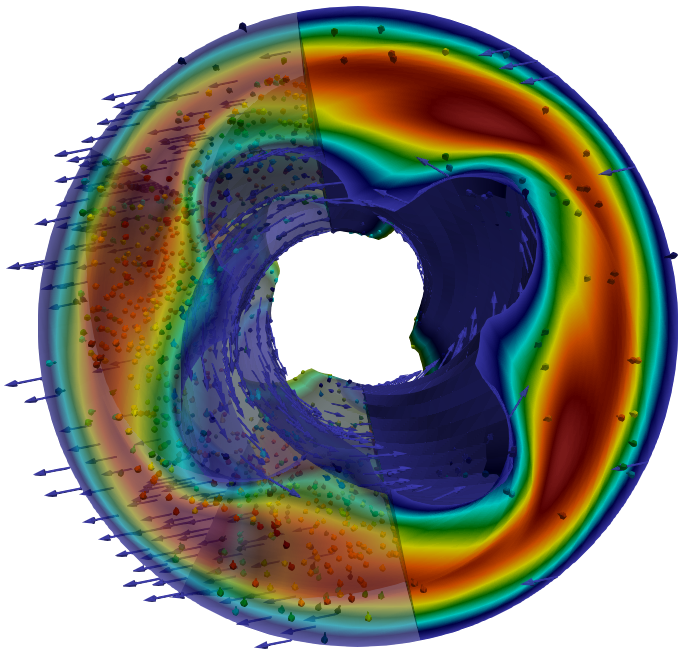}}\\

    \par\smallskip
    {\itshape (a)--(d): $t = 2.5\,\mathrm{s}$}
    \par

    \vspace{0.5em}

    \subfigure[Parameterization]{
        \includegraphics[width=0.23\linewidth]{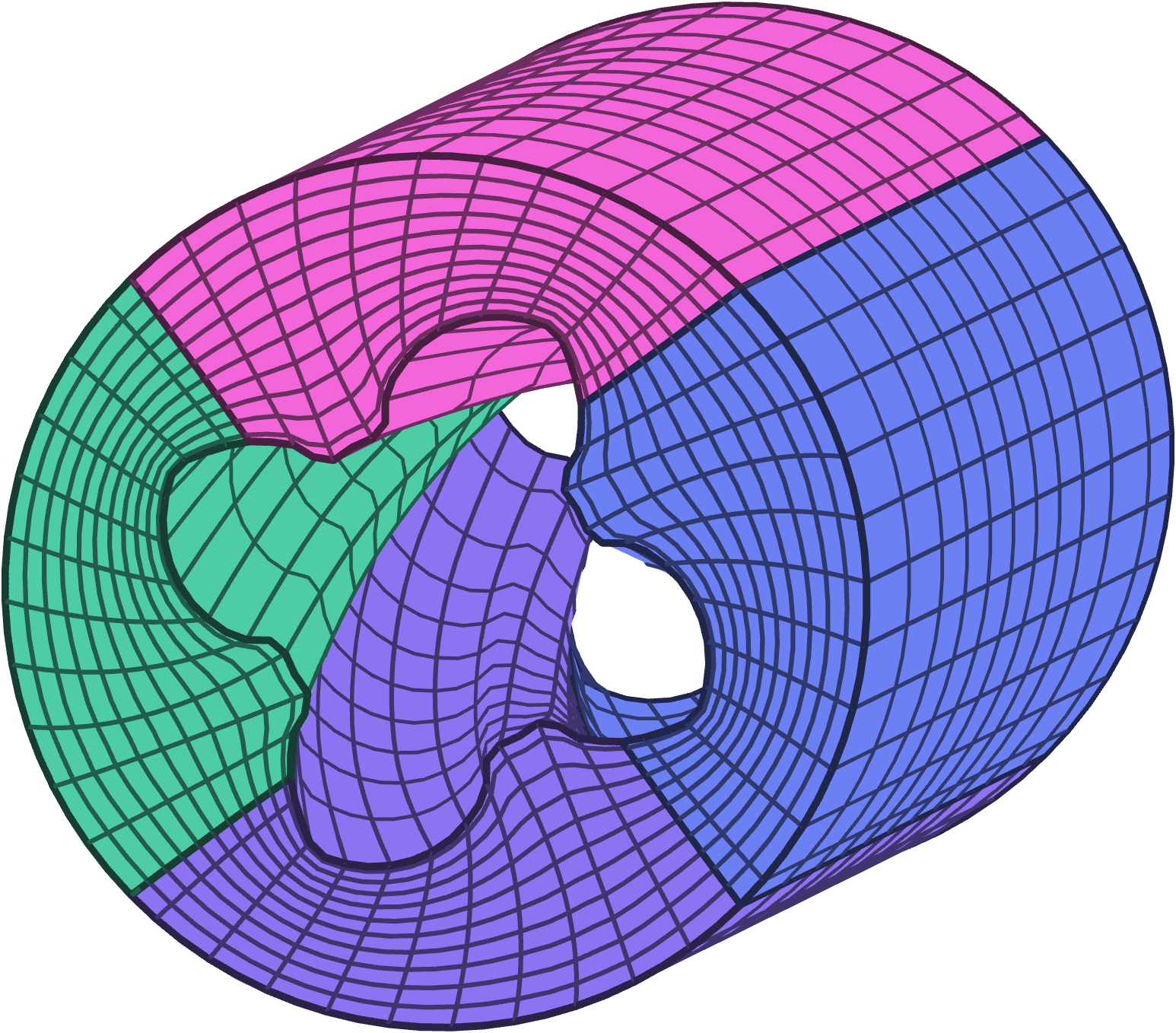}}
    \hfill
    \subfigure[Velocity magnitude]{
        \includegraphics[width=0.23\linewidth]{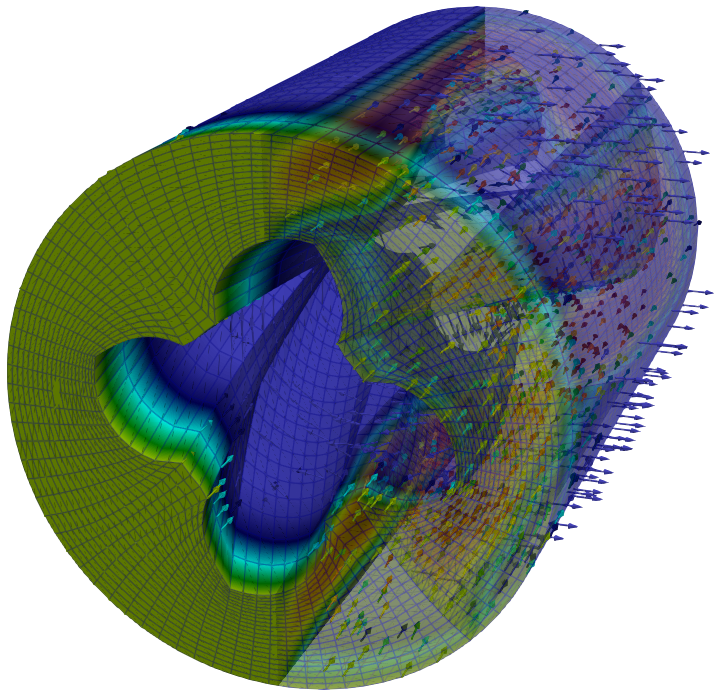}}
    \hfill
    \subfigure[Vertical cut]{
        \includegraphics[width=0.23\linewidth]{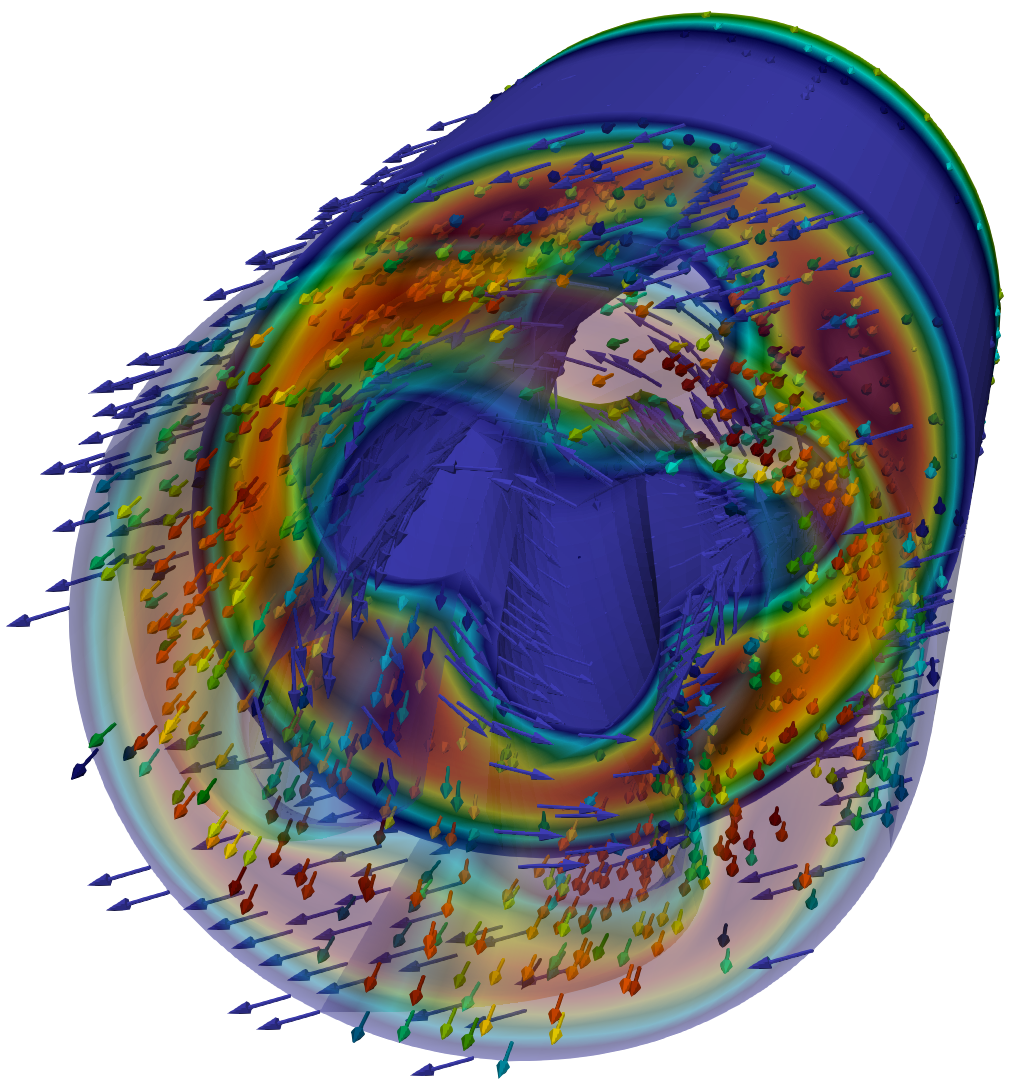}}
    \hfill
    \subfigure[Horizontal cut]{
        \includegraphics[width=0.23\linewidth]{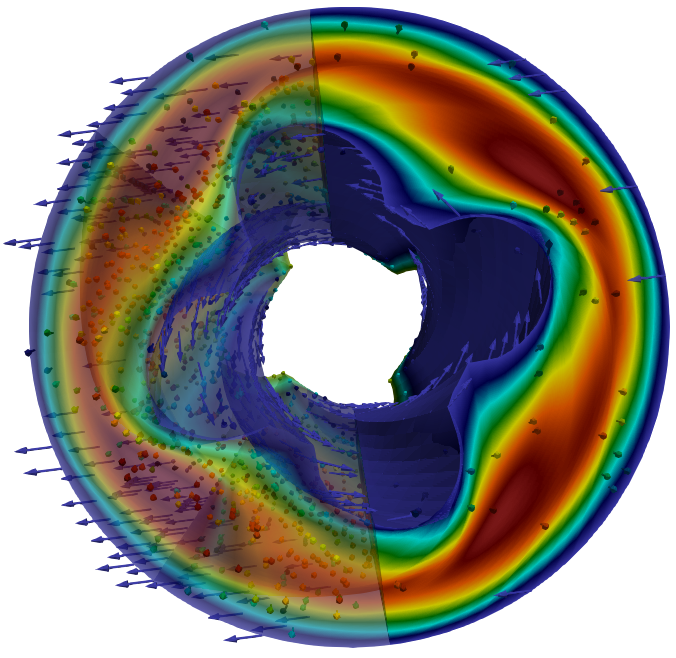}}

    \par\smallskip
    {\itshape (e)--(h): $t = 5.0\,\mathrm{s}$}
    \par

    \caption{Three-dimensional rotating-rotor benchmark at $t = 2.5\,\mathrm{s}$ (top row) and $t = 5.0\,\mathrm{s}$ (bottom row); each column corresponds to one view type. Panels~(a,\,e) show the four conforming volumetric patches of the parameterization adapted to the helically twisted rotor. Because the outer cylinder wall would otherwise occlude the interior flow, the velocity field is shown on three complementary views: panels~(b,\,f) display the magnitude on the rotor surface together with the outer cylinder; panels~(c,\,g) use a vertical cut aligned with the cylinder axis to reveal the velocity distribution around the four-lobed rotor profile; and panels~(d,\,h) use a horizontal cut near the inlet that exposes the four-fold symmetric flow pattern on a constant-$z$ plane. All velocity panels share the common color bar shown at the top.}
    \label{fig:rotating-rotor-3d}
\end{figure}

The tangential-slip reparameterization introduced in \Cref{sec:closed-domain} is applied in the same way as in the two-dimensional rotating-square case, despite the substantially more demanding geometric setting: the four ring-shaped patches must accommodate a $100^\circ$ helical twist along the axis while remaining conforming to the rotor surface as it rotates. Over the simulated interval $[0, 5]\,\mathrm{s}$, the rotor undergoes a cumulative rotation of $150^\circ$ and the parameterization is updated at every one of the 100 time steps without a single failure of the barrier-patch parameterization. This indicates that the proposed parameterization-driven strategy generalizes to 3D rotating-machinery configurations with no algorithmic modifications beyond the natural extension of the patch topology.

\section{Conclusions and outlook}
\label{sec:conclusions}

We have presented a parameterization-driven approach to mesh motion in isogeometric ALE-FSI. Instead of deforming a reference mesh from one time step to the next, the fluid mesh is obtained at each step as a multi-patch spline parameterization built from the current interface geometry. The method relies on three ingredients: a barrier-based parameterization procedure that keeps the Jacobian positive; a tangential-slip reparameterization that allows the parametric pre-images to move along the interface, which is needed when the body rotates through large cumulative angles; and a constant-preserving quasi-interpolation operator that transfers the discrete solution between consecutive parameterizations while satisfying the discrete geometric conservation law at the algebraic level. The proposed method works for meshes defined by elements of any order. However, our analysis employs IGA-native spline-based mesh representations, which provide typically higher resistance to large mesh deformations, as we have demonstrated in \cref{sec:mesh-quality}.

On the perpendicular-flap and Turek--Hron FSI2 benchmarks, the method reproduces established reference results and keeps the Jacobian bounded away from zero for the full duration of the simulations. The rotating-square benchmark, in which classical mesh-deformation ALE fails after a limited rotation, is handled without difficulty by the combination of barrier-based parameterization and tangential slip. The prescribed falling-sphere test completes a $40\,\mathrm{m}$ translation with a positive Jacobian, zero reconnections, and zero mesh-size growth; the corresponding reference's reconnection strategy increases the node and element counts by approximately $10\%$. The coupled free-fall test reproduces the wall-corrected terminal speed with a relative difference of $0.29\%$. The three-dimensional rotating-rotor example shows that the same construction applies to volumetric spline parameterizations. Since the parameterization is a purely geometric object, we also used the sequence of G+Smo-generated parameterizations to drive a deal.II finite element computation and obtained results consistent with the isogeometric solution.

The current framework has three limitations worth stating. The tangential-slip strategy, as implemented here, targets closed boundaries of convex or nearly convex bodies; strongly non-convex or multi-body configurations require a more general criterion for redistributing the pre-images. As demonstrated by the falling-sphere tests, large translational motion can be handled while the topology of the fluid domain and the patch connectivity remain unchanged. In contrast, topology-changing events such as collision, merging, or splitting of bodies are outside the scope of the present formulation, since they require a change of the patch layout itself. Such cases would require repartitioning or a coupling with unfitted methods. The quasi-interpolation operator preserves constants but is not conservative for integral quantities such as mass or momentum, which matters for long-time simulations with strong mass transport and would call for a projection-based transfer instead.

A natural next step is to extend the tangential-slip idea to the non-convex and multi-body cases mentioned above, and to combine the per-step parameterization with adaptive or quality-driven strategies. More broadly, because the parameterization is built independently of the field solver, the same geometry module can drive other field solvers: the \textsc{deal.II} computation of \Cref{sec:flow-past-square} is a first demonstration of this, and we intend to extend it beyond FSI to other multi-physics problems with moving interfaces.

\section*{Acknowledgment}
The authors gratefully acknowledge Hana Horn\'{i}kov\'{a} and Bohum\'{i}r Bastl from the University of West Bohemia in Pilsen, Czech Republic, for generously sharing their IGA fluid solver and for their helpful supports throughout this work. We also thank Teresa Schalk and Thomas-Peter Fries at Graz University of Technology for providing the rotating-square benchmark parameters and reference kinetic-energy and momentum data used in the validation.

\section*{Funding}
This research has been conducted as part of the project FlexFloat with project number 19002 of the research program Open Technology, which is (partly) financed by the Dutch Research Council (NWO), Netherlands.

\section*{CRediT authorship contribution statement}
\textbf{J. Li:} Conceptualization, Formal analysis, Investigation, Methodology, Software, Validation, Visualization, Writing – original draft.
\textbf{Y. Ji:} Methodology, Supervision, Visualization, Software, Writing – original draft, Writing – review \& editing.
\textbf{H. M. Verhelst:} Supervision, Funding acquisition, Software, Writing – review \& editing.
\textbf{J. H. den Besten:} Funding acquisition, Supervision, Writing – review \& editing.
\textbf{M. M\"{o}ller:} Conceptualization, Funding acquisition, Project administration, Supervision, Software, Writing – review \& editing.

\section*{Declaration of competing interest}
The authors declare that they have no known competing financial interests or personal relationships that could have appeared to influence the work reported in this paper.

\section*{Data availability}
Data will be made available upon request.

\section*{References}
\bibliography{cas-refs}

\end{document}